 \pgfmathsetlengthmacro\lw{.3pt+.5\pgflinewidth}
 \pgfmathtruncatemacro\dashnum{%
 round((\pgfdecoratedinputsegmentlength-3pt)/6pt)
 }
 \pgfmathsetmacro\dashscale{%
 \pgfdecoratedinputsegmentlength/(\dashnum*6pt + 3pt)
 }
 \pgfmathsetlengthmacro\dashunit{3pt*\dashscale}
\definecolor{Mycolor2}{HTML}{e85d04}
\newcommand{\qpsihyp}[5]{\,\mbox{}_{#1}\psi_{#2}\!\left(\!\!\begin{array}{c}{#3}\\[0.10cm]{#4}\end{array};{#5}\right)}
\newcommand{\hyp}[5]{\,\mbox{}_{#1}F_{#2}\!\left(\!\!
\begin{array}{c}{#3}\\ {#4}\end{array};#5\right)}
\newcommand{\Hhyp}[5]{\,\mbox{}_{#1}H_{#2}\!\left(\!
\begin{array}{c}{#3}\\[0.08cm] {#4}\end{array};#5\right)}
\newtheorem{thm}{Theorem}[section]
\newtheorem{lem}[thm]{Lemma}
\def\eqnarray{\stepcounter{equation}\let\@currentlabel=\theequation
\global\@eqnswtrue
\tabskip\@centering\let\\=\@eqncr
$$\halign to\displaywidth\bgroup\hfil\global\@eqcnt\z@
$\displaystyle\tabskip\z@{##}$&\global\@eqcnt\@ne
\hfil$\displaystyle{{}##{}}$\hfil
&\global\@eqcnt\tw@ $\displaystyle{##}$\hfil
\tabskip\@centering&\llap{##}\tabskip\z@\cr}
\def\endeqnarray{\@@eqncr\egroup
\global\advance\c@equation\m@ne$$\global\@ignoretrue}
\def\@yeqncr{\@ifnextchar [{\@xeqncr}{\@xeqncr[5pt]}}
\newcommand{\Z}{\mathbb{Z}} 
\newcommand{\R}{\mathbb{R}} 
\newcommand{\C}{\mathbb{C}} 
\newcommand{\N}{\mathbb{N}} 
\newcommand{\CC}{{{\mathbb C}}}
\newcommand{\expe}{{\mathrm e}}
\newcommand{\dd}{{\mathrm d}}
\DeclareMathOperator{\Li}{Li_2}
\let\svus_
\def\lowerit#1{\ThisStyle{\raisebox{-2\LMpt}{$\SavedStyle#1$}}\egroup}
\begin{document}
\renewcommand{\PaperNumber}{***}

\FirstPageHeading

\ShortArticleName{Evaluation of beta integrals of Ramanujan type}

\ArticleName{Evaluation of beta integrals of Ramanujan type and integral representations for bilateral hypergeometric series}
\Author{Howard S. Cohl$\,^{\ast}\orcidB{}$
and Hans Volkmer$\,^{\dag}\orcidD{}$
}

\AuthorNameForHeading{H.~S.~Cohl, H.~Volkmer}

\Address{$^\ast$ Applied and Computational 
Mathematics Division, National Institute 
of Standards 
and Tech\-no\-lo\-gy, Mission Viejo, CA 92694, USA
\URLaddressD{
\href{http://www.nist.gov/itl/math/msg/howard-s-cohl.cfm}
{http://www.nist.gov/itl/math/msg/howard-s-cohl.cfm}
}
} 
\EmailD{howard.cohl@nist.gov} 

\Address{$^\dag$ Department of Mathematical Sciences, University of Wisconsin–Milwaukee, PO Box 413,
Milwaukee, WI 53201, USA
\URLaddressD{
\href{https://uwm.edu/math/people/volkmer-hans/}
{https://uwm.edu/math/people/volkmer-hans/}
}
} 
\EmailD{volkmer@uwm.edu} 



\ArticleDates{Received~\today~in final form ????; 
Published online ????}
\Abstract{In this paper we evaluate integrals of products of gamma functions of Ramanujan type in terms of bilateral hypergeometric series. In cases where the bilateral hypergeometric series are summable, then we evaluate these integral as beta integrals. In addition, we obtain integral representations for bilateral hypergeometric series.
}
\Keywords{bilateral hypergeometric functions;
basic hypergeometric functions; Ramanujan-type integrals; beta integrals.}


\Classification{33D45, 05A15, 42C05, 05E05, 33D15}




\vspace{-0.10cm}

\begin{flushright}
\begin{minipage}{70mm}
\it Dedicated to George Andrews and Bruce Berndt for their 85\textsuperscript{th} birthdays.
\end{minipage}
\end{flushright}

\section{Introduction}\label{sec1}

In 1920 Ramanujan \cite[(7.1)]{Ramanujan1920} showed that
\begin{equation}\label{intro1}
\int_{-\infty}^\infty\frac{\dd x}{\Gamma(a+x)\Gamma(b+x)\Gamma(c-x)\Gamma(d-x)}
=\frac{\Gamma(a+b+c+d-3)}{\Gamma(a+c-1)\Gamma(a+d-1)\Gamma(b+c-1)\Gamma(b+d-1)}
\end{equation}
provided that $\Re(a+b+c+d)>3$.
This result is listed as Ramanujan's beta integral in the Digital Library of Mathematical Functions 
\cite[\href{http://dlmf.nist.gov/5.13.E4}{(5.13.4)}]{NIST:DLMF}.
It may be compared to Barnes' beta integral \cite[\href{http://dlmf.nist.gov/5.13.E3}{(5.13.3)}]{NIST:DLMF}
\begin{equation}\label{intro2}
\frac1{2\pi} \int_{-\infty}^\infty \Gamma(a+ix)\Gamma(b+ix)\Gamma(c-ix)\Gamma(d-ix)\,\dd x
=\frac{\Gamma(a+c)\Gamma(a+d)\Gamma(b+c)\Gamma(b+d)}{\Gamma(a+b+c+d)}
\end{equation}
which holds for $\Re a, \Re b, \Re c, \Re d>0$.
We notice an obvious difference between these integrals. The integrand in \eqref{intro1} is an entire function of $x$, while the integrand in \eqref{intro2} is 
meromorphic on $\C$ having many poles. The integral \eqref{intro2} may be evaluated by the residue theorem while
this theorem is not applicable to obtain \eqref{intro1}.
In fact, Ramanujan used the Fourier transform as the main tool in his proof of \eqref{intro1}.

In this paper we will investigate more general integrals of Ramanujan's type, namely,
\begin{equation}\label{intro3}
 \int_{-\infty}^\infty \frac{\expe^{-ixt}\,\dd x}{\prod_{j=1}^m \Gamma(a_j+1+x)\Gamma(b_j+1-x)},
\end{equation}
where $t\in\R$, $a_1,\dots,a_m$, $b_1,\dots, b_m$ are given complex numbers such that the integral exists.
Note that $m$ can be any positive integer.
Of course, we cannot expect that such integrals can always be evaluated explicitly in terms of $\Gamma$-functions
as in \eqref{intro1} or \eqref{intro2}.
However, applying the Poisson summation formula from the theory of Fourier transforms we show in Theorem \ref{4:t2} 
that the integral \eqref{intro3} can always be expressed in terms of a finite sum of bilateral hypergeometric series.
For the definition of the bilateral hypergeometric series and its basic properties we refer to Section \ref{sec2}.
In some cases we obtain hypergeometric series that can be evaluated in terms of $\Gamma$-functions.
In such cases we can also evaluate the integral explicitly. We study such cases in Section~\ref{sec5}.

As a second application of the Poisson summation formula we obtain in Theorem \ref{4:t3} an integral representation
of any bilateral hypergeometric series in terms of integrals of the type \eqref{intro3} (the numerator of the integrand will be
a trigonometric polynomial).

In the final section 6 we discuss possible extensions of our results to bilateral basic hypergeometric series.
However, the results for basic bilateral series are not completely parallel to those for bilateral hypergeometric series.

\section{Preliminaries on bilateral hypergeometric series}\label{sec2}

\subsection{Bilateral hypergeometric series}

Recall the definition of the shifted factorial (Pochhammer symbol)
\[ (c)_n:=\frac{\Gamma(c+n)}{\Gamma(c)}\quad\text{for $n\in\Z$}. \]
The bilateral hypergeometric series ${}_pH_q$ \cite[(6.1.1.2)]{Slater66} is defined by
\begin{equation}\label{H}
\Hhyp{p}{q}{c_1,\dots,c_p}{d_1,\dots, d_q}{z}=\sum_{n=-\infty}^\infty \frac{(c_1)_n\dots(c_p)_n}{(d_1)_n\dots (d_q)_n} z^n ,
 \end{equation}
where $p,q\in\N_0$ and $c_1,\dots, c_p, d_1,\dots,d_q\in\C$.
The series \eqref{H} is a formal Laurent series in the variable $z$. It is well-defined if $d_1,\dots,d_q\notin-\N_0$ and $c_1,\dots,c_p\notin\N$.
The shifted factorials satisfy
\[ (c)_n(1-c)_{-n}=(-1)^n\quad\text{for $n\in\Z$}.\]
This yields the symmetry relation
\begin{equation}\label{id}
 \Hhyp{p}{q}{c_1,\dots,c_p}{d_1,\dots, d_q}{z}=
\Hhyp{q}{p}{1-d_1,\dots,1-d_q}{1-c_1,\dots, 1-c_p}{\frac{\epsilon}{z}},\quad\text{where $\epsilon:=(-1)^{p-q}$.}
\end{equation}

The unilateral hypergeometric series ${}_pF_q$ defined by
\[ \hyp{p}{q}{a_1,,\dots,a_p}{b_1,\dots, b_q}{z}=\sum_{n=0}^\infty \frac{(a_1)_n\dots(a_p)_n}{(b_1)_n\dots (b_q)_n} 
\frac{z^n}{n!} \]
can be seen as a special case of the bilateral hypergeometric series.
If $b_q=1$ then
\begin{equation}\label{hyp1} 
 \Hhyp{p}{q}{a_1,\dots,a_p}{b_1,\dots, b_{q-1},1}{z}=
\hyp{p}{q-1}{a_1,\dots,a_p}{b_1,\dots, b_{q-1}}{z}.
\end{equation}
Conversely, the bilateral hypergeometric series can be expressed in terms of the 
unilateral hypergeometric series as follows:
\begin{multline*}
 \Hhyp{p}{q}{c_1,\dots,c_p}{d_1,\dots,d_q}{z}\\
= \hyp{p+1}{q}{1,c_1,\dots,c_p}{d_1,\dots,d_q}{z}
+\hyp{q+1}{p}{1,1-d_1,\dots,1-d_q}{1-c_1,\dots, 1-c_p}{\frac{\epsilon}{z}}\\
= \hyp{p+1}{q}{1,c_1,\dots,c_p}{d_1,\dots,d_q}{z}+
\frac{(1-d_1)\dots(1-d_q)}{(1-c_1)\dots(1-c_p)}\frac{\epsilon}{z}\, 
\hyp{q+1}{p}{1,2-d_1,\dots,2-d_q}{2-c_1,\dots, 2-c_p}{\frac{\epsilon}{z}}.
\end{multline*}
Note that these are identities for formal series.

If at least one of the $c_j$ is in $-\N_0$ then the series \eqref{H} terminates to the right. If at least one of the $d_j$ is in $\N$ then the series
terminates to the left. For example, if $c_1=-m$ , $m\in\N_0$, then 
\[
 \Hhyp{p}{q}{c_1,\dots,c_p}{d_1,\dots,d_q}{z}=\sum_{n=-\infty}^m \frac{(c_1)_n\dots(c_p)_n}{(d_1)_n\dots (d_q)_n} z^n .
 \]
In this case we can even allow that some of the $d_j$ are in $-\N_0$ as long as $d_j\notin\{0,\dots,-m+1\}$ for all $j=1,\dots,q$.

Every Laurent series has an inner radius $r$ and an outer radius $R$ of convergence.
The outer radius of the series \eqref{H} is $R=\infty$ if $p<q$ or if the series terminates to the right.
Otherwise $R=1$ if $p=q$ and $R=0$ if $p>q$. If follows from \eqref{id} that the inner radius of \eqref{H} is $r=0$ if $p>q$ or if the series terminates to the left.
Otherwise, $r=1$ if $p=q$ and $r=\infty$ if $p<q$.

Unless the series terminates, we have $r=R=\infty$ if $p<q$, $r=R=0$ if $p>q$ and $r=R=1$ if $p=q$.
Therefore, if $p\ne q$ and the series does not terminate, then the series \eqref{H} diverges for every $z$.

Now suppose that $p=q$. Then $r=R=1$ unless the series terminates.
Using the ratio test we see that we have absolute convergence on the unit circle $|z|=1$ provided that
\[ \sum_{j=1}^p \Re(c_j-d_j) <-1 .\]
Alternatively, we may use that
\[ \frac{(c_j)_n}{(d_j)_n}=O\left(|n|^{\Re(c_j-d_j)}\right)\quad\text{as $|n|\to\infty$} .\]
Using partial summation we see that we have conditional convergence for $|z|=1$, $z\ne 1$, if
\[   \sum_{j=1}^p \Re(c_j-d_j) <0 .\]
Conditional convergence means that the subseries $\sum_{n=0}^\infty$ and $\sum_{n=-\infty}^{-1}$ 
both converge conditionally.

\subsection{Basic hypergeometric series}

For $q\in\CC$, $|q|<1$, $a\in\C$ and $n\in\N_0$ the basic shifted factorials are defined by
\[ (a;q)_n:= \prod_{k=0}^{n-1} (1-aq^k) \text{ for $n\in\N$,}\]
and
\begin{equation}\label{factorials}
 (a;q)_{-n}:=\prod_{k=1}^n \frac{1}{1-aq^{-k}}=\frac{q^{\frac12n(n+1)}}{(-a)^n (qa^{-1};q)_n} \text{ for $n\in\N$,}
\end{equation}
and
\[
(a;q)_\infty:=\lim_{n\to\infty}(a;q)_n.
\]
The $q$-gamma function is given by \cite[\href{http://dlmf.nist.gov/5.18.E4}{(5.18.4)}]{NIST:DLMF}
\[
\Gamma_q(x):=\frac{(q;q)_\infty(1-q)^{1-x}}{(q^x;q)_\infty},
\]
We know \cite[\href{http://dlmf.nist.gov/5.18.E10}{(5.18.10)}]{NIST:DLMF} that 
\begin{equation}\label{gammalimit}
\lim_{q\to1^-} \Gamma_q(x)= \Gamma(x) .
\end{equation}
\\
Let $q\in\C$, $|q|<1$, and 
let $a_1,\dots,a_m$, $b_1,\dots,b_m\in\C$, where $m$ is any positive integer.
Then the bilateral basic hypergeometric series is defined by
\cite[\href{http://dlmf.nist.gov/17.4.E3}{(17.4.3)}]{NIST:DLMF}
\begin{equation}\label{psi}
 \qpsihyp{m}{m}{a_1,\dots,a_m}{b_1,\dots,b_m}{q,z}=\sum_{n\in \Z} \frac{(a_1,\ldots,a_m;q)_n}{(b_1,\ldots,b_m;q)_n} z^n ,
 \end{equation}
where we used the notation $(a_1,\ldots,a_m;q)_n=\prod_{j=1}^m (a_j;q)_n$.
We can also define the series ${}_r\psi_s$ for $r\ne s$
according to \cite[\href{http://dlmf.nist.gov/17.4.E3}{(17.4.3)}]{NIST:DLMF}.
The series \eqref{psi} is well-defined if $b_j\notin\{q^k: k\in\N_0\}$ and $a_j\notin \{q^{-k}: k\in\N\}$
for all $j=1,\dots,m$.
The series converges absolutely when 
\[ \frac{|b_1\cdots b_m|}{|a_1\cdots a_m|}<|z|<1 .\]
For additional properties of bilateral basic hypergeometric series we refer to Chapter 5 of \cite{GaspRah}.

\subsection{The limit \texorpdfstring{$q\to1^-$}{q->1}}

We have 
\[ \lim_{q\to1^-} \frac{(q^\alpha;q)_n}{(q^\beta;q)_n}= \frac{(\alpha)_n}{(\beta)_n} \quad\text{for $n\in\Z$}.
\]
Therefore, as $q\to1^-$, the basic bilateral hypergeometric series 
\begin{equation}\label{2:psim}
 \qpsihyp{m}{m}{q^{\alpha_1},\dots,q^{\alpha_m}}{q^{\beta_1},\dots,{q^{\beta_m}}}{q,z}
\end{equation}
converges termwise to the bilateral hypergeometric series
\begin{equation}\label{2:Hm}
 \Hhyp{m}{m}{\alpha_1,\dots,\alpha_m}{\beta_1,\dots,\beta_m}{z} .
\end{equation}
However, this does not imply that \eqref{2:psim} converges to \eqref{2:Hm} for some $z\in\C$.
Actually, this statement is meaningless because in the non-terminating case \eqref{2:Hm} is defined only on the unit circle
$|z|=1$ while \eqref{2:psim} is undefined there.
In the following, we show in what sense \eqref{2:Hm} is the limit of \eqref{2:psim} as $q\to1^-$.
We start with some lemmas.

\begin{lem}\label{2:l1}
Let $\alpha=s+it$ with $s>0$, $t\in \R$.
There is a constant $K$ independent of $q$ and $n$ such that
\begin{equation}\label{2:ineq1}
 |(q^\alpha;q)_n|\le K (q^s;q)_n \quad\text{for all $q\in(0,1)$ and $n\in\N_0$}.
\end{equation}
\end{lem}
\begin{proof}
Set $q=e^{-u}$, $u>0$. Then we have 
\[
\frac{|1-q^\alpha|^2}{(1-q^s)^2}=\frac{1+e^{-2us}-2e^{-us}\cos(ut)}{1+e^{-2us}-2e^{-us}}=
1+\frac{\sin^2(\frac12ut)}{\sinh^2(\frac12 us)} .
\]
Therefore,
\[ \frac{|1-q^\alpha|^2}{(1-q^s)^2}\le 1+\frac{t^2}{s^2} .\]
If we apply this inequality with $\alpha+k$ in place of $\alpha$ for $k=0,\dots,n-1$ we get
\[ \frac{|(q^\alpha;q)_n|^2}{(q^s;q)_n^2}\le \prod_{k=0}^{n-1} \left(1+\frac{t^2}{(s+k)^2}\right)\le K^2:=\prod_{k=0}^\infty \left(1+\frac{t^2}{(s+k)^2}\right).\]
This gives \eqref{2:ineq1}.
\end{proof}

\begin{lem}\label{2:l2}
Let $0<\beta\le \alpha$. Then 
\begin{equation}\label{2:ineq2}
 \frac{(q^\alpha;q)_n}{(q^\beta;q)_n}\le \frac{(\alpha)_n}{(\beta)_n}\quad\text{for $q\in(0,1)$, $n\in\N_0$}.
 \end{equation}
In particular, there is a constant $K$ independent of $q$ and $n$ such that
\[ \frac{(q^\alpha;q)_n}{(q^\beta;q)_n}\le K (1+n)^{\alpha-\beta} \quad\text{for $q\in(0,1)$, $n\in\N_0$}.\]
\end{lem}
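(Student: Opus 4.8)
The plan is to reduce the product inequality \eqref{2:ineq2} to a factorwise inequality and to prove the latter by a short monotonicity argument. Since $0<\beta\le\alpha$, for every $q\in(0,1)$ and $k\in\No$ the quantities $1-q^{\beta+k}$, $1-q^{\alpha+k}$, $\beta+k$, $\alpha+k$ are all positive, so
\[
\frac{(q^\alpha;q)_n}{(q^\beta;q)_n}=\prod_{k=0}^{n-1}\frac{1-q^{\alpha+k}}{1-q^{\beta+k}},\qquad
\frac{(\alpha)_n}{(\beta)_n}=\prod_{k=0}^{n-1}\frac{\alpha+k}{\beta+k},
\]
and it suffices to show that $\dfrac{1-q^b}{1-q^a}\le\dfrac{b}{a}$ whenever $0<a\le b$, then apply this with $a=\beta+k$, $b=\alpha+k$ and multiply over $k=0,\dots,n-1$ (the case $n=0$ being trivial).

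Clearing the positive denominators, $\dfrac{1-q^b}{1-q^a}\le\dfrac{b}{a}$ is equivalent to $a(1-q^b)\le b(1-q^a)$, i.e.\ to $h(b)\le h(a)$ where $h(x):=(1-q^x)/x$ for $x>0$. To prove that $h$ is nonincreasing on $(0,\infty)$ I would write, with $\lambda:=-\ln q>0$,
\[
h(x)=\frac{1-e^{-\lambda x}}{x}=\lambda\int_0^1 e^{-\lambda xt}\,\dd t=\lambda\int_0^1 q^{xt}\,\dd t ;
\]
since $0<q<1$, for each fixed $t\in[0,1]$ the integrand $q^{xt}$ is nonincreasing in $x$, hence so is $h$. (Equivalently, one computes $h'(x)=\bigl((\lambda x+1)e^{-\lambda x}-1\bigr)/x^2$ and checks that the numerator is $\le 0$ for $x>0$, e.g.\ because it vanishes at $x=0$ and has derivative $-\lambda^2 x e^{-\lambda x}$.) As $a\le b$ this gives $h(b)\le h(a)$, which is exactly the desired factorwise inequality; taking the product over $k$ proves \eqref{2:ineq2}.

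For the ``in particular'' statement it is enough, by \eqref{2:ineq2}, to bound $(\alpha)_n/(\beta)_n$ from above. Writing $(\alpha)_n/(\beta)_n=\Gamma(\beta)\Gamma(\alpha+n)\big/\bigl(\Gamma(\alpha)\Gamma(\beta+n)\bigr)$ and invoking the standard ratio asymptotics $\Gamma(\alpha+n)/\Gamma(\beta+n)\sim n^{\alpha-\beta}$ as $n\to\infty$, the sequence $(1+n)^{\beta-\alpha}(\alpha)_n/(\beta)_n$ converges to $\Gamma(\beta)/\Gamma(\alpha)$; being a convergent sequence of positive finite numbers it is bounded by some $K=K(\alpha,\beta)$, whence $(\alpha)_n/(\beta)_n\le K(1+n)^{\alpha-\beta}$ and the claim follows (trivially $K=1$ works when $\alpha=\beta$). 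If one prefers to avoid Stirling, one can instead take logarithms in $(\alpha)_n/(\beta)_n=\prod_{k=0}^{n-1}\bigl(1+(\alpha-\beta)/(\beta+k)\bigr)$, use $\ln(1+x)\le x$, and compare $\sum_{k=0}^{n-1}(\beta+k)^{-1}$ with $\ln(1+n)$ up to an additive constant depending only on $\alpha,\beta$.

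The only genuinely substantive point is the monotonicity of $h$ in the second step; once the integral representation is written down, the rest is routine bookkeeping. The sole item requiring a little care is that the constant $K$ in the last step be independent of $n$, which holds because it arises as a bound for a convergent sequence — and it is trivially independent of $q$ since it does not involve $q$ at all.
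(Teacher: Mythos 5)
Your proof is correct and follows essentially the same route as the paper: both reduce \eqref{2:ineq2} to the single-factor inequality $\frac{1-q^{\alpha+k}}{1-q^{\beta+k}}\le\frac{\alpha+k}{\beta+k}$ and multiply over $k=0,\dots,n-1$. The only difference is in how that elementary inequality is verified --- the paper notes that $q\mapsto\frac{1-q^{\alpha}}{1-q^{\beta}}$ is non-decreasing on $(0,1)$ and hence bounded by its limit $\alpha/\beta$ as $q\to1^-$, whereas you fix $q$ and show that $x\mapsto(1-q^{x})/x$ is non-increasing via the integral representation; both are sound, and your explicit treatment of the ``in particular'' bound (which the paper leaves implicit) is also correct.
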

\begin{proof}
The inequalities $0<\beta\le \alpha$ imply that the function 
$f(q)=\frac{1-q^\alpha}{1-q^\beta}$
is non-decreasing so \eqref{2:ineq2} follows.
\end{proof}

Note that the inequality sign in \eqref{2:ineq2} must be reversed when $0<\alpha\le \beta$. 

\begin{lem}\label{2:l3}
Let $0<\alpha<\beta$ and $\tau>0$. There is a constant $K$ independent of $q$ and $n$ such that
\begin{equation}\label{2:ineq3}
 \frac{(q^\alpha;q)_n}{(q^\beta;q)_n}q^{n\tau} \le K(1+n)^{\alpha-\beta}\quad\text{for $q\in(0,1)$, $n\in\N_0$}.
 \end{equation}
\end{lem}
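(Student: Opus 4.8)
The plan is to establish \eqref{2:ineq3} in the equivalent form that
\[
 \sup_{q\in(0,1),\ n\in\N_0}\ \frac{(q^\alpha;q)_n}{(q^\beta;q)_n}\,q^{n\tau}\,(1+n)^{\beta-\alpha}<\infty .
\]
Write $\delta:=\beta-\alpha>0$ and $q=e^{-u}$ with $u>0$. The starting point is the elementary bound $\frac{(q^\alpha;q)_n}{(q^\beta;q)_n}=\prod_{k=0}^{n-1}\frac{1-q^{\alpha+k}}{1-q^{\beta+k}}\le1$, which holds because $0<\alpha<\beta$ and $q\in(0,1)$ force $q^{\alpha+k}>q^{\beta+k}$, so every factor lies in $(0,1)$. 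Fix $u_0:=1/(\beta+1)$. In the range $q\le e^{-u_0}$ (that is, $u\ge u_0$) we then get $\frac{(q^\alpha;q)_n}{(q^\beta;q)_n}q^{n\tau}(1+n)^{\delta}\le e^{-u_0 n\tau}(1+n)^{\delta}$, which is bounded uniformly in $n$ because the exponential factor dominates the polynomial. So the whole difficulty is concentrated in the range $q>e^{-u_0}$, i.e.\ $0<u<u_0$, where the decay $(1+n)^{-\delta}$ has to come out of the ratio itself.

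To handle that, I would bound $-\log\frac{(q^\alpha;q)_n}{(q^\beta;q)_n}$ from below. Since $\frac{d}{ds}\log(1-e^{-us})=\frac{ue^{-us}}{1-e^{-us}}$, the substitution $w=us$ gives
\[
 -\log\frac{(q^\alpha;q)_n}{(q^\beta;q)_n}=\sum_{k=0}^{n-1}\log\frac{1-q^{\beta+k}}{1-q^{\alpha+k}}
 =\sum_{k=0}^{n-1}\int_{u(\alpha+k)}^{u(\beta+k)}\frac{\dd w}{e^{w}-1}.
\]
Now apply the inequality $\frac{1}{e^{w}-1}\ge\frac1w-\frac12$ for $w>0$ (equivalently $\tfrac12\coth(w/2)\ge\tfrac1w$), but only to those summands with $u(\beta+k)\le2$, where $\frac1w-\frac12\ge0$ on the whole interval of integration; the remaining summands are simply discarded, being $\ge0$. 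Let $N$ denote the number of $k\in\{0,\dots,n-1\}$ with $u(\beta+k)\le2$; as this is the condition $k\le\frac2u-\beta$, those indices form an initial block $\{0,\dots,N-1\}$, and since $u<u_0<2/\beta$ we have $N\ge1$ whenever $n\ge1$. Because the largest such $k$ is $N-1$, the quantity $uN$ is bounded by a constant depending only on $\alpha,\beta$, so $\sum_{k=0}^{N-1}\frac{u\delta}{2}=O(1)$ and
\[
 -\log\frac{(q^\alpha;q)_n}{(q^\beta;q)_n}\ \ge\ \sum_{k=0}^{N-1}\log\frac{\beta+k}{\alpha+k}-O(1)
 =\log\frac{(\beta)_N}{(\alpha)_N}-O(1)\ \ge\ \delta\log(1+N)-O(1),
\]
where the last step uses the elementary fact that $\frac{(\beta)_m}{(\alpha)_m}(1+m)^{-\delta}$ is bounded below by a positive constant for $m\in\N_0$ (its limit being $\Gamma(\alpha)/\Gamma(\beta)>0$). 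Hence $\frac{(q^\alpha;q)_n}{(q^\beta;q)_n}\le C\,(1+N)^{-\delta}$ with $C$ independent of $q$ and $n$.

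It then remains to compare $(1+N)^{-\delta}q^{n\tau}$ with $(1+n)^{-\delta}$. If $N=n$ there is nothing to prove. If $N<n$, then $k=N$ fails the condition $u(\beta+k)\le2$, so $u(\beta+N)>2$; with $u_0$ chosen as above this forces $1+N>1/u$, hence $\frac{1+n}{1+N}<u(1+n)\le 2un$ (note $n\ge2$ here), and therefore
\[
 \frac{(q^\alpha;q)_n}{(q^\beta;q)_n}\,q^{n\tau}\,(1+n)^{\delta}\ \le\ C\Bigl(\frac{1+n}{1+N}\Bigr)^{\!\delta}e^{-un\tau}\ \le\ C\,2^{\delta}(un)^{\delta}e^{-\tau(un)}\ \le\ C\,2^{\delta}\sup_{\theta>0}\theta^{\delta}e^{-\tau\theta}<\infty .
\]
Together with the range $q\le e^{-u_0}$ this yields the required uniform bound, and \eqref{2:ineq3} follows.

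The hard part is exactly the regime where $q$ is close to $1$. Using $\frac{1}{e^{w}-1}\ge\frac1w-\frac12$ on all $n$ summands at once would introduce an error $\frac{\delta un}{2}$, i.e.\ a spurious factor $q^{-\delta n/2}$, which is harmless only when $\tau\ge\delta/2$; for small $\tau$ one genuinely has to truncate the sum at $k\sim2/u$, accept the weaker decay $\log(1+N)$ in place of $\log(1+n)$, and recover the shortfall from $q^{n\tau}=e^{-un\tau}$ via the boundedness of $\theta\mapsto\theta^{\delta}e^{-\tau\theta}$ on $(0,\infty)$. Keeping the thresholds straight ($u$ against $u_0$, and $N$ against $n$) is the only genuine subtlety; everything else is routine.
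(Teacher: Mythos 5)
Your argument is correct, and it reaches the bound by a genuinely different route than the paper. The paper fixes $p=\lceil\beta-\alpha\rceil$, observes that $f(z)=(q^\alpha;q)_n/(q^z;q)_n$ is analytic in $\Re z>0$ with $\log f(x)$ convex by Hadamard's three lines theorem, and interpolates between $f(\alpha)=1$ and $f(\alpha+p)$; the latter is a quotient of two length-$p$ finite products estimated by $\bigl((\alpha+p-1)u/(1-e^{-u(\alpha+n)})\bigr)^p$, and the factor $q^{n\tau}$ is absorbed through the boundedness of $v\mapsto\frac{v}{1-e^{-v}}e^{-cv}$ on $(0,\infty)$. You instead work directly with $-\log$ of the ratio, writing each factor as $\int_{u(\alpha+k)}^{u(\beta+k)}\frac{\dd w}{e^w-1}$ and comparing with $\frac1w-\frac12$ (i.e.\ $\tfrac12\coth(w/2)\ge\tfrac1w$) on the initial block of indices with $u(\beta+k)\le 2$; this yields decay $(1+N)^{-\delta}$ with $N\approx\min(n,2/u)$, and the shortfall when $N<n$ is recovered from $q^{n\tau}=e^{-un\tau}$ via $\sup_{\theta>0}\theta^\delta e^{-\tau\theta}<\infty$. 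The two proofs lean on the same two mechanisms in disguise --- a convexity or comparison step to handle non-integer $\beta-\alpha$, and the boundedness of $\theta^{a}e^{-b\theta}$ to convert the geometric factor into polynomial decay --- but yours avoids complex analysis entirely and makes transparent exactly where $\tau>0$ is needed (it must be, since for fixed $q$ the ratio tends to the nonzero constant $(q^\alpha;q)_\infty/(q^\beta;q)_\infty$ as $n\to\infty$, so the conclusion fails for $\tau=0$). Your threshold bookkeeping ($N\ge1$ when $u<u_0$, $uN=O(1)$, and $1+N>1/u$ when $N<n$) all checks out.
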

\begin{proof}
Let $p$ be the smallest positive integer such that $\beta\le \alpha+p$.
For fixed $n\ge p$ and $q\in(0,1)$ we introduce the function
\[ f(z):=\frac{(q^\alpha;q)_n}{(q^z;q)_n} .\]
This function is analytic in the half-plane $\Re z>0$.
For $x>0$ let 
\[ M(x):=\max\{|f(x+iy)|: y\in\R\}= f(x) .\]
By Hadamard's three lines theorem \cite[Ch.\ VI, Thm 3.7]{Conway1978}, $\log  f(x)$ is a convex function.
Since $f(\alpha)=1$ we get
\begin{equation}\label{2:hadamard}
 f(\beta)\le (f(\alpha+p))^{\frac{\beta-\alpha}{p}} .
\end{equation}
Now
\[ f(\alpha+p)=\frac{\prod_{j=0}^{p-1} (1-q^{\alpha+j})}{\prod_{j=0}^{p-1} (1-q^{\alpha+n+j})}
\le \frac{(1-q^{\alpha+p-1})^p}{(1-q^{\alpha+n})^p} .\]
Let $q=e^{-u}$, $u>0$. Then the inequality $1-e^{-x}\le x$ for $x\in\R$ implies
\begin{equation}\label{2:ineq3a}
 f(\alpha+p)\le \left(\frac{(\alpha+p-1)u}{1-e^{-u(\alpha+n)}}\right)^p .
\end{equation}
Set $\sigma:=\frac{p\tau}{\beta-\alpha}$. Then we have 
\begin{equation}\label{2:ineq3b}
 \frac{u}{1-e^{-u(\alpha+n)}}e^{-un p^{-1}\sigma} \le \frac{K_1}{\alpha+n},
 \end{equation}
where
\[ K_1:=\max\{ \frac{v}{1-e^{-v}} e^{-\frac{v\sigma}{\alpha+p}}:v>0\} .\]
Now \eqref{2:ineq3a}, \eqref{2:ineq3b} give
\[ f(\alpha+p) q^{n\sigma} \le (\alpha+p-1)^pK_1^p \frac{1}{(\alpha+n)^p} \quad\text{for $q\in(0,1)$, $n\ge p$.}\]
Using \eqref{2:hadamard} we obtain \eqref{2:ineq3}.
\end{proof}

\begin{lem}\label{2:l4}
Let $\alpha, \beta\in\C$, $-\beta\notin \N_0$.
Let $\tau>0$. Then there is $q_0\in(0,1)$ independent of $n$ and 
a constant $K$ independent of $q$ and $n$ such that
\[ \frac{|(q^\alpha;q)_n|}{|(q^\beta;q)_n|} q^{n\tau}\le K (1+n)^{-\Re(\beta-\alpha)}\quad\text{for all $q\in[q_0,1)$, $n\in\N_0$}.\]
\end{lem}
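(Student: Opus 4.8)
The plan is to reduce the general complex case to the real‑parameter estimates of Lemmas \ref{2:l1}--\ref{2:l3}. Write $\alpha=s_1+it_1$ and $\beta=s_2+it_2$. If $\alpha\in-\N_0$ then $(q^\alpha;q)_n=0$ for every $n>-\alpha$, so the inequality is trivial for those $n$ and only finitely many $n$ remain; hence I may assume $\alpha\notin-\N_0$ as well. Fix $p\in\N_0$ so large that $s_1+p>0$ and $s_2+p>0$. I would first handle $n\ge p$ and afterwards absorb the finitely many smaller $n$ into the constant.

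For $n\ge p$ I would split off the first $p$ factors, writing $(q^\alpha;q)_n=(q^\alpha;q)_p\,(q^{\alpha+p};q)_{n-p}$ and likewise for $\beta$, so that
\[
\frac{|(q^\alpha;q)_n|}{|(q^\beta;q)_n|}
=\Biggl(\prod_{k=0}^{p-1}\frac{|1-q^{\alpha+k}|}{|1-q^{\beta+k}|}\Biggr)\,
\frac{|(q^{\alpha+p};q)_{n-p}|}{|(q^{\beta+p};q)_{n-p}|}.
\]
For the finite product in front, note that since $-\beta\notin\N_0$ the numbers $\beta,\beta+1,\dots,\beta+p-1$ are all nonzero, and for $q\in(0,1)$ a factor $1-q^{\beta+k}$ can vanish only when $\Re(\beta+k)=0$, in which case its zeros in $(0,1)$ form a sequence tending to $0$; hence one may choose $q_0\in(0,1)$ with $1-q^{\beta+k}\ne0$ on $[q_0,1)$ for $k=0,\dots,p-1$. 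On $[q_0,1)$ the product is then continuous in $q$, and dividing numerator and denominator of each factor by $1-q$ gives $\frac{1-q^{\alpha+k}}{1-q^{\beta+k}}\to\frac{\alpha+k}{\beta+k}$ as $q\to1^-$, so the product has a finite limit and is bounded on $[q_0,1)$ by a constant depending only on $\alpha,\beta$. This is where the hypothesis $-\beta\notin\N_0$ and the number $q_0$ enter, and I expect it to be the main technical point of the proof.

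It then remains to bound $\dfrac{|(q^{\alpha+p};q)_{n-p}|}{|(q^{\beta+p};q)_{n-p}|}\,q^{n\tau}$ for $n\ge p$, where now $\Re(\alpha+p)>0$ and $\Re(\beta+p)>0$. For the numerator, Lemma \ref{2:l1} gives $|(q^{\alpha+p};q)_{n-p}|\le K_1(q^{\Re\alpha+p};q)_{n-p}$. For the denominator, exactly as in the proof of Lemma \ref{2:l1} one has $|1-q^{\beta+p+j}|\ge 1-q^{\Re\beta+p+j}>0$ for all $j\ge0$, whence $|(q^{\beta+p};q)_{n-p}|\ge(q^{\Re\beta+p};q)_{n-p}$. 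We are thus reduced to the real quantity $\dfrac{(q^{\Re\alpha+p};q)_{n-p}}{(q^{\Re\beta+p};q)_{n-p}}\,q^{n\tau}$, which I would bound by $K(1+n)^{\Re\alpha-\Re\beta}=K(1+n)^{-\Re(\beta-\alpha)}$ using Lemma \ref{2:l2} when $\Re\beta\le\Re\alpha$ (together with $q^{n\tau}\le1$) and Lemma \ref{2:l3} when $\Re\alpha<\Re\beta$ (this is the only place $\tau>0$ is needed), after noting that $(1+n-p)$ and $(1+n)$ are comparable up to a factor depending only on $p$. Finally, for the finitely many $n<p$ the left‑hand side is, after enlarging $q_0$ if necessary, continuous in $q$ on $[q_0,1)$ with limit $|(\alpha)_n|/|(\beta)_n|$ as $q\to1^-$, hence bounded there; collecting the constants gives the claim with $K$ and $q_0$ depending only on $\alpha$, $\beta$, $\tau$.
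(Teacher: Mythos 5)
Your proof is correct and follows essentially the same route as the paper's: split off finitely many initial factors (bounded on some $[q_0,1)$ using $-\beta\notin\N_0$), reduce to real parameters via Lemma \ref{2:l1} and the lower bound $|1-q^{\beta+j}|\ge 1-q^{\Re(\beta+j)}$ from its proof, and then invoke Lemma \ref{2:l2} or Lemma \ref{2:l3} according to the sign of $\Re(\alpha-\beta)$. You simply spell out in more detail the existence of $q_0$ and the degenerate case $\alpha\in-\N_0$, which the paper leaves implicit.
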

\begin{proof}
Choose $k\in\N$ so large that $\Re(\alpha+k)>0$ and $\Re(\beta+k)>0$.
There is $q_0\in(0,1)$ and a constant $L$ independent of $q$ and $n$ such that  
\[ \frac{|(q^\alpha;q)_n|}{|(q^\beta;q)_n|} q^{n\tau}\le L q^{n\tau}\prod_{j=k}^{n-1} \frac{|1-q^{\alpha+j}|}{|1-q^{\beta+j}|}.
\quad\text{for $q\in[q_0,1)$, $n\ge k$}.\]
By Lemma \ref{2:l1},
\[  \frac{|(q^\alpha;q)_n|}{|(q^\beta;q)_n|} q^{n\tau}\le K_1 Lq^{n\tau} \prod_{j=k}^{n-1} \frac{1-q^{\Re(\alpha+j)}}{1-q^{\Re(\beta+j)}}
\quad\text{for $q\in[q_0,1)$, $n\ge k$}.\]
We complete the proof by using Lemma \ref{2:l2} if $\Re\alpha\ge \Re\beta$ and Lemma \ref{2:l3} if $\Re\alpha<\Re\beta$.
\end{proof}

We now establish the following convergence theorem connecting \eqref{2:psim} with \eqref{2:Hm}.

\begin{thm}\label{2:t1}
Let $\alpha_j, \beta_j\in\C$ for $j=1,\dots,m$, where $m\in\N_0$.
Suppose that $\alpha_j\notin \N$ and $-\beta_j\notin \N_0$ for all $j=1,\dots,m$,
and that $\Re \sigma>1$, where $\sigma:=\sum_{j=1}^m (\beta_j-\alpha_j)$.
Let $0<\tau<\Re\sigma$. Then
\begin{equation}\label{2:limit}
 \lim_{q\to1^-} \qpsihyp{m}{m}{q^{\alpha_1},\dots,q^{\alpha_m}}{q^{\beta_1},\dots,q^{\beta_m}}{q, q^\tau z}
= \Hhyp{m}{m}{\alpha_1,\dots,\alpha_m}{\beta_1,\dots,\beta_m}{z}
\end{equation}
uniformly on the unit circle $|z|=1$.
\end{thm}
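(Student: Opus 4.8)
The plan is a Weierstrass $M$-test argument on Laurent coefficients. Write the left-hand side of \eqref{2:limit} as $\Psi_q(z):=\sum_{n\in\Z}a_n(q)\,z^n$ with $a_n(q):=q^{n\tau}\prod_{j=1}^m\frac{(q^{\alpha_j};q)_n}{(q^{\beta_j};q)_n}$, and the right-hand side as $H(z):=\sum_{n\in\Z}h_n\,z^n$ with $h_n:=\prod_{j=1}^m\frac{(\alpha_j)_n}{(\beta_j)_n}$. For $q\in(0,1)$ and $|z|=1$ one has $q^{\Re\sigma}<q^{\tau}=|q^{\tau}z|<1$ because $0<\tau<\Re\sigma$, so $\Psi_q$ is a genuinely (absolutely) convergent function. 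The goal is to establish a bound
\[ |a_n(q)|\le K\,(1+|n|)^{-\Re\sigma}\qquad\text{for all }n\in\Z\text{ and all }q\in[q_0,1), \]
with $K$ and $q_0\in(0,1)$ independent of $n$ and $q$. Granting this, $\sum_{n\in\Z}(1+|n|)^{-\Re\sigma}<\infty$ since $\Re\sigma>1$, and $a_n(q)\to h_n$ as $q\to1^-$ for each fixed $n$ (shown below); letting $q\to1^-$ in the bound then also gives $|h_n|\le K(1+|n|)^{-\Re\sigma}$, and for $|z|=1$, $q\in[q_0,1)$ and any $N\in\N$,
\[ |\Psi_q(z)-H(z)|\le\sum_{|n|\le N}|a_n(q)-h_n|\;+\;2K\sum_{|n|>N}(1+|n|)^{-\Re\sigma}. \]
Given $\varepsilon>0$ one first picks $N$ so the tail is $<\varepsilon/2$ and then lets $q\to1^-$; since both estimates are independent of $z$ on the unit circle this yields \eqref{2:limit} uniformly on $|z|=1$.

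First I would prove the bound for $n\ge0$. Write $\tau=\tau_1+\dots+\tau_m$ with $\tau_j:=\tau/m>0$ and apply Lemma \ref{2:l4} to each factor, with parameters $\alpha_j,\beta_j$ and exponent $\tau_j$; this is legitimate precisely because $-\beta_j\notin\N_0$. It provides $q_0^{(j)}\in(0,1)$ and $K_j>0$ with $\frac{|(q^{\alpha_j};q)_n|}{|(q^{\beta_j};q)_n|}q^{n\tau_j}\le K_j(1+n)^{-\Re(\beta_j-\alpha_j)}$ for $q\in[q_0^{(j)},1)$ and $n\in\N_0$. Multiplying these $m$ inequalities gives $|a_n(q)|\le K(1+n)^{-\Re\sigma}$ with $K:=\prod_jK_j$, valid for $q$ close enough to $1$ and all $n\ge0$.

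The hard part is the bound for $n\le-1$; here I would use the reflection formula \eqref{factorials}. Write $n=-k$, $k\ge1$, so that $a_{-k}(q)=q^{-k\tau}\prod_{j=1}^m\frac{(q^{\alpha_j};q)_{-k}}{(q^{\beta_j};q)_{-k}}$, and insert $(q^{c};q)_{-k}=q^{k(k+1)/2}\big/\big((-1)^{k}q^{kc}(q^{1-c};q)_{k}\big)$ into each of the $2m$ factors. The powers $q^{k(k+1)/2}$ and the signs $(-1)^k$ occur $m$ times in the numerator and $m$ times in the denominator and cancel, leaving
\[ a_{-k}(q)=q^{\,k(\sigma-\tau)}\prod_{j=1}^m\frac{(q^{1-\beta_j};q)_k}{(q^{1-\alpha_j};q)_k}, \]
which is well defined because $\alpha_j\notin\N$ keeps the denominators nonzero. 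Since $\tau<\Re\sigma$, the surviving power of $q$ carries the \emph{positive} real exponent $\Re\sigma-\tau$; split $\Re\sigma-\tau=\tau_1'+\dots+\tau_m'$ with $\tau_j':=(\Re\sigma-\tau)/m>0$ and apply Lemma \ref{2:l4} once more, now to the parameters $1-\beta_j$ and $1-\alpha_j$ with exponent $\tau_j'$ — legitimate because $-(1-\alpha_j)=\alpha_j-1\notin\N_0$, that is, $\alpha_j\notin\N$. This bounds each factor by $K_j'(1+k)^{-\Re(\beta_j-\alpha_j)}$, hence $|a_{-k}(q)|\le K'(1+k)^{-\Re\sigma}$ for $q$ near $1$. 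Taking $q_0$ to be the largest of the finitely many thresholds from both steps, and $K$ the larger of the two constants, completes the displayed bound for all $n\in\Z$. (If some $\alpha_j\in-\N_0$ or some $\beta_j\in\N$ the two series terminate on one side; then the relevant $(q^{\,\cdot};q)$-factor in a numerator vanishes, $a_n(q)=0=h_n$ there, and the bound is trivial.)

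It remains to record the termwise limit $a_n(q)\to h_n$. For $n\ge0$ this is $q^{n\tau}\to1$ together with $\lim_{q\to1^-}\frac{(q^{\alpha_j};q)_n}{(q^{\beta_j};q)_n}=\frac{(\alpha_j)_n}{(\beta_j)_n}$. For $n=-k<0$ it follows from the reflected expression above, from $\lim_{q\to1^-}\frac{(q^{1-\beta_j};q)_k}{(q^{1-\alpha_j};q)_k}=\frac{(1-\beta_j)_k}{(1-\alpha_j)_k}$, and from the identity $(c)_{-k}=(-1)^k/(1-c)_k$ (the case $n\mapsto-k$ of $(c)_n(1-c)_{-n}=(-1)^n$), which shows $\prod_j\frac{(1-\beta_j)_k}{(1-\alpha_j)_k}=\prod_j\frac{(\alpha_j)_{-k}}{(\beta_j)_{-k}}=h_{-k}$. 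With this the argument sketched in the first paragraph finishes the proof. The only genuinely nonroutine point is the treatment of the negative tail: recognizing that \eqref{factorials} turns it into another ${}_m\psi_m$-type tail weighted by $q^{k(\sigma-\tau)}$, whose positive real exponent $\Re\sigma-\tau$ (guaranteed by $\tau<\Re\sigma$) is exactly what makes Lemma \ref{2:l4} apply, with that lemma's hypothesis $-\beta\notin\N_0$ now playing the role of $\alpha_j\notin\N$.
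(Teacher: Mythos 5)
Your proposal is correct and follows essentially the same route as the paper: the same dominating bound $|a_n(q)|\le K(1+|n|)^{-\Re\sigma}$ obtained from Lemma \ref{2:l4} for $n\ge 0$, the same use of \eqref{factorials} to convert the negative tail into $q^{n(\sigma-\tau)}\prod_j (q^{1-\beta_j};q)_n/(q^{1-\alpha_j};q)_n$ with Lemma \ref{2:l4} applied a second time, and a concluding step that is Tannery's theorem written out explicitly. The only differences are expository (splitting $\tau$ into $m$ equal pieces and spelling out the $\varepsilon/2$ argument), not mathematical.
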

\begin{proof}
We have termwise convergence of the series. 
By Lemma \ref{2:l4}, there is a constant $K$ independent of $q$ and $n$ such that 
\begin{equation}\label{2:ineq}
  q^{n\tau}\prod_{j=1}^m \frac{|(q^{\alpha_j};q)_n|}{|(q^{\beta_j};q)_n|} \le K (1+|n|)^{-\Re\sigma}\quad\text{for $q\in[q_0,1)$, $n\in\N_0$} .
\end{equation}
Using \eqref{factorials} we find, for $n\in\N$,
\[   q^{-n\tau}\prod_{j=1}^m \frac{(q^{\alpha_j};q)_{-n}}{(q^{\beta_j};q)_{-n}} =q^{n(\sigma-\tau)}
\prod_{j=1}^m \frac{(q^{1-\beta_j};q)_n}{(q^{1-\alpha_j};q)_n} .\]
Applying Lemma \ref{2:l4} a second time we show that \eqref{2:ineq} holds for all $n\in\Z$.
Since $\Re\sigma>1$, Tannery's theorem implies \eqref{2:limit}.
\end{proof}

\section{The Poisson summation formula}\label{sec3}

Let $f:\R\to\C$ be integrable, and let 
\[ F(t):=\int_{-\infty}^\infty f(x)\,\expe^{-ixt}\,\dd x,\quad t\in\R,\]
be its Fourier transform. 
The functions $f(x)$ and $F(t)$ are connected through the Poisson summation formula \cite[Thm 3.2.8]{Grafakos2014} 
that we will apply in the following form.

\begin{thm}\label{3:Poisson}
Let $f:\R\to\C$  be a continuous function which satisfies 
\begin{equation} 
|f(x)|\le K (1+|x|)^{-1-\epsilon}\quad\text{for all  $x\in\R$},
\end{equation}
for some positive constants $K,\epsilon$.
Let $F(t)$ be the Fourier transform of $f(x)$.  
Let $\omega>0$ and $t\in\R$ be such that $\sum_{n\in\Z} |F(t+n\omega)|<\infty$. Then
\begin{equation}\label{eq1}
 \sum_{n\in\Z} F(t+n\omega)=\frac{2\pi}{\omega}\sum_{n\in\Z} f\left(\frac{2\pi n}{\omega}\right) \expe^{-i\frac{2\pi n}{\omega}t} .
 \end{equation}
\end{thm}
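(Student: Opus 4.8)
The plan is to reduce to the classical lattice case by a modulation, run the standard periodization argument, and use the summability hypothesis to pass from Fourier coefficients back to a point value. First I would absorb $t$ into the function: set $f_t(x):=f(x)\expe^{-ixt}$, which is still continuous and satisfies the same bound $|f_t(x)|=|f(x)|\le K(1+|x|)^{-1-\epsilon}$, while its Fourier transform is the translate $\widehat{f_t}(s)=F(t+s)$. Then $\sum_{n\in\Z}|\widehat{f_t}(n\omega)|=\sum_{n\in\Z}|F(t+n\omega)|<\infty$ by hypothesis, so it suffices to prove the $t=0$ identity $\sum_n\widehat{f_t}(n\omega)=\tfrac{2\pi}{\omega}\sum_n f_t(2\pi n/\omega)$.

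Put $P:=2\pi/\omega$ and form the periodization $g(x):=\sum_{n\in\Z} f_t(x+nP)$. The decay bound makes this series converge absolutely and uniformly on $\R$, so $g$ is continuous and $P$-periodic. Computing its Fourier coefficients for the period $P$ — interchanging sum and integral (justified by the absolute convergence) and unfolding the integral over a single period into an integral over all of $\R$, using $\expe^{-ik\omega(y-nP)}=\expe^{-ik\omega y}$ because $\omega P=2\pi$ — gives $c_k=\tfrac1P\int_{-\infty}^\infty f_t(y)\expe^{-ik\omega y}\,\dd y=\tfrac{\omega}{2\pi}F(t+k\omega)$. Now the hypothesis $\sum_k|F(t+k\omega)|<\infty$ is exactly $\sum_k|c_k|<\infty$, so $\sum_k c_k\expe^{ik\omega x}$ converges uniformly to a continuous function with the same Fourier coefficients as $g$; by uniqueness of Fourier coefficients for continuous periodic functions, $g(x)=\sum_k c_k\expe^{ik\omega x}$ for all $x$. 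Evaluating at $x=0$ gives $\sum_n f_t(nP)=\sum_k c_k$, i.e.\ $\sum_n f(2\pi n/\omega)\expe^{-i2\pi nt/\omega}=\tfrac{\omega}{2\pi}\sum_k F(t+k\omega)$, which is \eqref{eq1} after multiplication by $2\pi/\omega$.

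The one genuinely delicate step is the final passage from the Fourier coefficients of $g$ back to the value $g(0)$: this is where the assumption $\sum_n|F(t+n\omega)|<\infty$ is indispensable, since without it one recovers only the coefficients of the periodization, not its value at a point. The convergence of the periodization and the interchange of sum and integral, by contrast, are routine consequences of $|f(x)|\le K(1+|x|)^{-1-\epsilon}$. Since the whole argument is a dilation ($x\mapsto x/\omega$, moving the lattice $2\pi\Z$ to $\omega\Z$) and a modulation ($f\mapsto f_t$) away from the statement in \cite[Thm 3.2.8]{Grafakos2014}, one could equally well quote that reference and merely record these two changes of variable.
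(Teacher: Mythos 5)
Your proof is correct and follows essentially the same route as the paper: both reduce \eqref{eq1} to the classical lattice case via the substitution $f(x)\mapsto f(2\pi x/\omega)\,\expe^{-2\pi i xt/\omega}$ (your dilation plus modulation), and your summability hypothesis $\sum_k|c_k|<\infty$ is used exactly where the paper's hypothesis $\sum_n|F(t+n\omega)|<\infty$ enters. The only difference is that the paper simply cites \cite[Thm 3.2.8]{Grafakos2014} for the base case, whereas you write out its standard periodization and Fourier-coefficient-uniqueness proof --- a harmless elaboration whose equivalence your own closing remark already acknowledges.
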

\begin{proof}
In \cite[Def.\ 2.2.8]{Grafakos2014} the Fourier transform is defined in a slightly different way:
\[ \tilde F(s)=\int_{-\infty}^\infty f(x)\,\expe^{-2\pi  ixs}\,\dd x. \]
Then the Poisson summation formula in its simplest form  \cite[Thm 3.2.8]{Grafakos2014} 
is 
\[ \sum_{n\in \Z} \tilde F(n)=\sum_{n\in \Z} f(n). \]
Replacing $f(x)$ by $f(\frac{2\pi x}{\omega}) \expe^{-2\pi i \frac{xt}{\omega}}$ gives \eqref{eq1}.
\end{proof}

\section{Main results}\label{sec4}

Let $a_1,\dots,a_m$, $b_1,\dots,b_m$ be complex numbers for some $m\in\N$.
We define entire functions
\[ f_j(x):=\frac{1}{\Gamma(a_j+1+x)\Gamma(b_j+1-x)} \quad\text{for $j=1,2,\dots,m$,}
\]
and consider their product
\begin{equation}\label{4:f}
f(x):=\prod_{j=1}^m f_j(x)=\frac{1}{\prod_{j=1}^m\Gamma(1+a_j+x)\Gamma(1+b_j-x)} . 
\end{equation}
We have $f_j(x)={\mathcal O}(|x|^{-\Re(a_j+b_j)-1})$ as $x\in\R$, $|x|\to\infty$.
We assume that $\Re(a_j+b_j)>0$ for all $j=1,\dots,m$ so that $f_j\in L^1(\R)$ for all $j$.
Let 
\[ F_j(t):=\int_{-\infty}^\infty f_j(x)\,\expe^{-ixt}\,\dd x \]
be the Fourier transform of $f_j(x)$.
To compute it Ramanujan \cite[(1.1)]{Ramanujan1920} starts with Cauchy's integral \cite[p.\ 158, (5)]{Nielsen1906} stating that
\begin{equation}\label{4:Cauchy}
\int_{-\frac12\pi}^{\frac12\pi} (\cos t)^{\gamma}\expe^{i\delta t}\,\dd t =\frac{\pi\Gamma(\gamma+1)}{2^\gamma \Gamma(1+\frac12(\gamma+\delta))\Gamma(1+\frac12(\gamma-\delta))}\quad\text{for $\Re \gamma>-1$} .
\end{equation}
It follows from \eqref{4:Cauchy} with $\gamma=a_j+b_j$, $\delta=a_j-b_j+2x$ and Fourier inversion \cite[Thm 2.2.14]{Grafakos2014} that
\begin{equation}\label{4:fourierFj}
F_j(t)=\begin{cases} \displaystyle{\frac{(2\cos(\frac12t))^{a_j+b_j}}{\Gamma(a_j+b_j+1)}\expe^{-\tfrac12 it(b_j-a_j)}} & \text{if $t\in\R$, $|t|\le \pi$}, \\0 & \text{if $t\in\R$, $|t|>\pi$.}\end{cases}\\
\end{equation}
Let $F(t)$ be the Fourier transform of $f(x)$:
\[ F(t):=\int_{-\infty}^\infty f(x)\,\expe^{-itx}\,\dd x .\]
Then
\begin{equation}\label{4:fourierF}
 F(t)=(2\pi)^{1-m} (F_1\ast F_2\ast\dots\ast F_m)(t),
 \end{equation}
 where the convolution is defined by
\[ (F\ast G)(t)=\int_{-\infty}^\infty F(s)G(t-s)\,\dd s .\]

\begin{lem}\label{4:l1}
Let $\sum_{j=1}^m \Re(a_j+b_j+1)>1$.
Then $F(t)=0$ for $t\in\R$ with $|t|\ge m\pi$.
\end{lem}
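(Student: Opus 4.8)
The plan is to exploit the fact that the support of a convolution is contained in the Minkowski sum of the supports of its factors, combined with the explicit formula \eqref{4:fourierFj}.

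First I would pin down the exact support of each factor $F_j$. By \eqref{4:fourierFj} we have $F_j(t)=0$ for $|t|>\pi$, and moreover $F_j(\pm\pi)=0$: at $t=\pm\pi$ the quantity $\cos(\tfrac12 t)$ vanishes, and since $\Re(a_j+b_j)>0$ (which is in force throughout Section~\ref{sec4}) the factor $(2\cos(\tfrac12 t))^{a_j+b_j}$ equals $0$ there. Hence $F_j(t)=0$ for all $t$ with $|t|\ge\pi$. Each $F_j$ is bounded with compact support, hence lies in $L^1(\R)$, so the iterated convolution occurring in \eqref{4:fourierF} is well defined.

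The heart of the argument is an induction on $k$ proving that $G_k:=F_1\ast F_2\ast\dots\ast F_k$ vanishes on $\{t\in\R:|t|\ge k\pi\}$. The case $k=1$ is the observation just made. For the inductive step, suppose the claim holds for some $k$ with $1\le k<m$, and fix $t$ with $|t|\ge(k+1)\pi$. Writing
\[
 G_{k+1}(t)=\int_{-\infty}^\infty G_k(s)\,F_{k+1}(t-s)\,\dd s,
\]
consider an arbitrary $s\in\R$: if $|s|\ge k\pi$ then $G_k(s)=0$ by the inductive hypothesis, while if $|s|<k\pi$ then $|t-s|\ge|t|-|s|>(k+1)\pi-k\pi=\pi$, so $F_{k+1}(t-s)=0$. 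In either case the integrand vanishes, whence $G_{k+1}(t)=0$. Taking $k=m$ and invoking \eqref{4:fourierF} gives $F(t)=(2\pi)^{1-m}G_m(t)=0$ for $|t|\ge m\pi$.

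This is essentially routine once the ingredients are in place; the only point demanding a little care is that the conclusion carries a \emph{non-strict} inequality $|t|\ge m\pi$. That refinement rests precisely on each $F_j$ vanishing at the endpoints $t=\pm\pi$, which in turn is where the hypothesis $\Re(a_j+b_j)>0$ is used; without it the same reasoning would only yield $F(t)=0$ for $|t|>m\pi$, and the boundary case would then need a separate measure-zero argument applied to the convolution integral.
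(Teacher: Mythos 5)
Your core mechanism --- the support of a convolution is contained in the sum of the supports of its factors, organized as an induction on the number of factors --- is exactly the argument the paper uses, and your attention to the boundary case $|t|=m\pi$ (each $F_j$ vanishes at $t=\pm\pi$ because $\Re(a_j+b_j)>0$ forces $(2\cos(\tfrac12 t))^{a_j+b_j}=0$ there) is correct and slightly more explicit than the paper's one-line appeal to \eqref{4:fourierF}.

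There is, however, a genuine gap: you prove the statement only under the stronger assumption $\Re(a_j+b_j)>0$ for \emph{every} $j$, while the lemma is asserted under the weaker hypothesis $\sum_{j=1}^m \Re(a_j+b_j+1)>1$, i.e.\ $\sum_j\Re(a_j+b_j)>1-m$. Under the weaker hypothesis some individual $\Re(a_j+b_j)$ may be $\le 0$; then $f_j(x)={\mathcal O}(|x|^{-\Re(a_j+b_j)-1})$ fails to be in $L^1(\R)$, the transform $F_j$ is not defined by an absolutely convergent integral, and the factorization \eqref{4:fourierF} on which your induction rests is unavailable --- only the full product $f=\prod_j f_j$ is guaranteed integrable. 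The paper closes this in one sentence: for fixed $t$ with $|t|\ge m\pi$, the integral $F(t)$ is an analytic function of the parameters $(a_1,\dots,a_m,b_1,\dots,b_m)$ on the connected domain $\sum_j\Re(a_j+b_j+1)>1$ (the integral converges locally uniformly there), and since it vanishes on the nonempty open subset where all $\Re(a_j+b_j)>0$ --- which is what your argument establishes --- it vanishes identically by analytic continuation. You need to add that step; pointing to the sentence ``we assume $\Re(a_j+b_j)>0$ for all $j$'' earlier in Section \ref{sec4} does not suffice, because the lemma and the theorems that invoke it are deliberately stated under the weaker summed condition.
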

\begin{proof}
Assume first that $\Re(a_j+b_j)>0$ for all $j=1,\dots,m$. 
By \eqref{4:fourierFj}, $F_j(t)=0$ for $t\in\R$, $|t|\ge \pi$.
Now $F(t)=0$ for $t\in\R$ with $|t|\ge m\pi$ follows from \eqref{4:fourierF}.
By analytic continuation, the statement of the lemma is also true 
under the assumption $\sum_{j=1}^m \Re(a_j+b_j+1)>1$.
\end{proof}

\begin{thm}\label{4:t2}
Let $\sum_{j=1}^m \Re(a_j+b_j+1)>1$.
Suppose that $p\in\N$, $p\ge m$ and $t\in\R$ with $|t|\le p\pi$. Then we have 
\begin{equation}\label{eq2}
F(t)=\int_{-\infty}^\infty f(x)\,\expe^{-ixt}\,\dd x=\sum_{k=0}^p S_k(t),
\end{equation}
where $f(x)$ is defined in \eqref{4:f},  
\begin{equation}
 S_k(t):=\frac1p\sum_{\ell=-\infty}^\infty f(\ell+\tfrac{k}{p}) \expe^{-i(\ell+\frac{k}{p}) t }
=\frac1p C_k \expe^{-i\frac{k}{p}t}\Hhyp{m}{m}{-b_1+\frac{k}{p},\dots,-b_m+\frac{k}{p}}{a_1+1+\frac{k}{p},\dots,a_m+1+\frac{k}{p} }{(-1)^m\expe^{-it}},
\end{equation}
and
\begin{equation} C_k:=\frac{1}{\prod_{j=1}^m \Gamma(a_j+1+\frac{k}{p})\Gamma(b_j+1-\frac{k}{p})} .
\end{equation}
\end{thm}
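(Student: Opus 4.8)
The plan is to separate the theorem into an analytic identity and an algebraic identity. The analytic identity is
\[ F(t)=\frac1p\sum_{n\in\Z} f\!\left(\tfrac{n}{p}\right)\expe^{-int/p}=\sum_{k} S_k(t), \]
which I would obtain from the Poisson summation formula (Theorem~\ref{3:Poisson}) together with the compact support of $F$ supplied by Lemma~\ref{4:l1}. The algebraic identity is the closed form of each $S_k(t)$ as $\frac1p C_k\expe^{-ikt/p}$ times an ${}_mH_m$ series, which follows from routine shifted-factorial manipulations. In both steps the hypothesis $\sum_{j=1}^m\Re(a_j+b_j+1)>1$ is what makes things work.

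For the analytic identity I would first verify the hypotheses of Theorem~\ref{3:Poisson}. Since $f$ is entire it is continuous; from $f_j(x)={\mathcal O}(|x|^{-\Re(a_j+b_j)-1})$ we get $f(x)={\mathcal O}(|x|^{-\sum_{j}\Re(a_j+b_j+1)})$ as $|x|\to\infty$, and as the exponent exceeds $1$ this gives the required bound $|f(x)|\le K(1+|x|)^{-1-\epsilon}$; in particular $f\in L^1(\R)$, so $F$ is continuous. By Lemma~\ref{4:l1}, $F(t)=0$ for $|t|\ge m\pi$, so $F$ is bounded with support in $[-m\pi,m\pi]$, and for every $\omega>0$ the sum $\sum_{n\in\Z}|F(t+n\omega)|$ has only finitely many nonzero terms. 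I then apply Theorem~\ref{3:Poisson} with $\omega:=2\pi p$, so that $2\pi/\omega=1/p$ and $2\pi n/\omega=n/p$. On the left-hand side, if $|t|\le p\pi$ then for every $n\ne0$ we have $|t+2\pi pn|\ge 2\pi p|n|-|t|\ge 2\pi p-p\pi=p\pi\ge m\pi$ because $p\ge m$, so all terms with $n\ne0$ vanish and $\sum_{n\in\Z}F(t+2\pi pn)=F(t)$. On the right-hand side, writing $n=\ell p+k$ with $\ell\in\Z$ and $k$ running over a complete residue system modulo $p$, so that $n/p=\ell+k/p$, regroups $\frac1p\sum_{n\in\Z}f(n/p)\expe^{-int/p}$ as $\sum_{k}\frac1p\sum_{\ell\in\Z}f(\ell+\tfrac{k}{p})\expe^{-i(\ell+k/p)t}=\sum_{k}S_k(t)$, the inner series converging absolutely by the decay of $f$. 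This yields the first claimed equality.

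For the algebraic identity, fix $k$ and expand, using \eqref{4:f},
\[ f\!\left(\ell+\tfrac{k}{p}\right)=\prod_{j=1}^m\frac{1}{\Gamma(a_j+1+\tfrac{k}{p}+\ell)\,\Gamma(b_j+1-\tfrac{k}{p}-\ell)}. \]
Applying $\Gamma(c+\ell)=\Gamma(c)(c)_\ell$ (valid for $\ell\in\Z$) to every factor peels off the constant $C_k$ and leaves $\prod_{j}\big[(a_j+1+\tfrac{k}{p})_\ell\,(b_j+1-\tfrac{k}{p})_{-\ell}\big]^{-1}$. The relation $(c)_n(1-c)_{-n}=(-1)^n$ recalled in Section~\ref{sec2}, with $c=b_j+1-\tfrac{k}{p}$ and $n=-\ell$, gives $(b_j+1-\tfrac{k}{p})_{-\ell}=(-1)^{\ell}/(-b_j+\tfrac{k}{p})_\ell$, so that
\[ f\!\left(\ell+\tfrac{k}{p}\right)=C_k\,(-1)^{m\ell}\,\frac{\prod_{j=1}^m(-b_j+\tfrac{k}{p})_\ell}{\prod_{j=1}^m(a_j+1+\tfrac{k}{p})_\ell}. \]
Substituting this into $S_k(t)=\frac1p\sum_{\ell}f(\ell+\tfrac{k}{p})\expe^{-i(\ell+k/p)t}$, factoring out $\expe^{-ikt/p}$, and absorbing $(-1)^{m\ell}\expe^{-i\ell t}=\big((-1)^m\expe^{-it}\big)^{\ell}$ into the summation index identifies $S_k(t)$ with $\frac1p C_k\expe^{-ikt/p}\,\Hhyp{m}{m}{-b_1+\tfrac{k}{p},\dots,-b_m+\tfrac{k}{p}}{a_1+1+\tfrac{k}{p},\dots,a_m+1+\tfrac{k}{p}}{(-1)^m\expe^{-it}}$, which is the stated expression. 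Since $\sum_{j=1}^m\Re\big((-b_j+\tfrac{k}{p})-(a_j+1+\tfrac{k}{p})\big)=-\sum_{j=1}^m\Re(a_j+b_j+1)<-1$, this bilateral series converges absolutely on $|z|=1$ by the criterion in Section~\ref{sec2}, so the term-by-term rewriting is legitimate.

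The main obstacle is the truncation of the left-hand side of the Poisson identity to the single term $n=0$; this rests entirely on Lemma~\ref{4:l1} (the support of $F$ lies in $[-m\pi,m\pi]$) and on the choice $\omega=2\pi p$ with $p\ge m$, and is the only step requiring genuine care. Everything else is bookkeeping, but it is worth tracking where the hypothesis $\sum_{j}\Re(a_j+b_j+1)>1$ enters: it supplies the decay needed for Theorem~\ref{3:Poisson} and the $L^1$ membership of $f$; it is precisely the hypothesis of Lemma~\ref{4:l1}; and it is exactly what makes the ${}_mH_m$ series at $z=(-1)^m\expe^{-it}$ absolutely convergent, so that the residue decomposition of the lattice sum and the shifted-factorial rewriting are both legitimate rearrangements.
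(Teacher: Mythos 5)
Your proposal is correct and follows essentially the same route as the paper: Poisson summation with $\omega=2\pi p$, truncation of the left-hand lattice sum to the single term $n=0$ via Lemma \ref{4:l1} and $p\ge m$, regrouping $n=p\ell+k$ by residues mod $p$, and the shifted-factorial rewriting of each $S_k$ using $(c)_n(1-c)_{-n}=(-1)^n$. Note that you (correctly) end up with the sum over $k=0,\dots,p-1$, exactly as in the paper's own proof; the upper limit $p$ in the displayed statement of the theorem is evidently a typo, harmless since $S_p=S_0$ by Lemma \ref{l2} but double-counting that term as written.
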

\begin{proof}
Theorem \ref{3:Poisson} with $\omega=2\pi p$ gives
\[ \sum_{n\in\Z} F(t+2\pi p n)=\frac1p\sum_{n\in\Z} f\left(\frac{n}{p}\right) \expe^{-i\frac{n}{p}t} .\]
By Lemma \ref{4:l1}, the left-hand sum reduces to $F(t)$.
By setting $n=p\ell+k$, $k=0,\dots,p-1$, we find
\[ F(t)=\frac1p\sum_{n\in\Z} f\left(\frac{n}{p}\right)\,\expe^{-i\frac{n}{p}t}=\sum_{k=0}^{p-1} S_k(t) .\]
We note that
\begin{align*}
 f_j(\ell+\tfrac{k}{p})&=\frac{1}{\Gamma(a_j+1+\ell+\tfrac{k}{p})\Gamma(b_j+1-\ell-\tfrac{k}{p})}\\
 &= \frac{(-1)^\ell}{\Gamma(a_j+1+\tfrac{k}{p})\Gamma(b_j+1-\tfrac{k}{p})} 
 \frac{(\tfrac{k}{p}-b_j)_\ell}{(a_j+1+\tfrac{k}{p})_\ell}.
\end{align*}
If we multiply theses identities for $j=1,\dots,m$, multiply by $\expe^{-i (\ell +\frac{k}{p}) t}$ and then add for $\ell\in\Z$ we obtain the 
second formula for $S_k(t)$.
\end{proof}

\begin{lem}\label{l2}
We have $S_k(t)=S_{k+p}(t)$. If $a_j=b_j$ for all $j=1,\dots,m$ then $S_k(t)=S_{-k}(-t)=S_{p-k}(-t)$.
\end{lem}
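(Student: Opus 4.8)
The plan is to argue directly from the first representation
\[ S_k(t)=\frac1p\sum_{\ell=-\infty}^\infty f(\ell+\tfrac kp)\,\expe^{-i(\ell+\frac kp)t}, \]
since each of the three claimed identities is an elementary reindexing of this sum, in the symmetric case combined with a parity property of $f$. Throughout, the series converges absolutely under the standing hypothesis $\sum_{j=1}^m\Re(a_j+b_j+1)>1$ of Theorem~\ref{4:t2}, so all the rearrangements below are legitimate.

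For the periodicity $S_k(t)=S_{k+p}(t)$ I would replace $k$ by $k+p$: the argument $\ell+\frac{k+p}{p}$ becomes $(\ell+1)+\frac kp$ and the exponential becomes $\expe^{-i((\ell+1)+\frac kp)t}$, so substituting $\ell\mapsto\ell-1$ in the sum reproduces $S_k(t)$ term by term.

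For the case $a_j=b_j$ ($j=1,\dots,m$), first note that then
\[ f(x)=\prod_{j=1}^m\frac1{\Gamma(a_j+1+x)\Gamma(a_j+1-x)} \]
is an even function of $x$, i.e.\ $f(-x)=f(x)$. Starting from $S_{-k}(-t)=\frac1p\sum_{\ell}f(\ell-\tfrac kp)\,\expe^{i(\ell-\frac kp)t}$, I substitute $\ell\mapsto-\ell$ to obtain $\frac1p\sum_\ell f\bigl(-(\ell+\tfrac kp)\bigr)\,\expe^{-i(\ell+\frac kp)t}$, and then apply $f(-x)=f(x)$ to recognize this as $S_k(t)$. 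The identity $S_{p-k}(-t)=S_k(t)$ then follows by combining this with the periodicity (applied with index $-k$), since $S_{p-k}(-t)=S_{-k}(-t)$.

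I do not anticipate a real obstacle; the only points to state carefully are that absolute convergence of the defining series justifies the reindexings, and that the assumption $a_j=b_j$ is precisely what makes $f$ even. As an alternative, the periodicity could be read off from the closed ${}_mH_m$-form of $S_k(t)$ via $(c)_{n+1}=c\,(c+1)_n$, and the parity from the symmetry relation~\eqref{id}, but the direct computation on the defining sum is shorter and cleaner.
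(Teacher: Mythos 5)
Your proof is correct and is essentially the paper's own argument: the paper disposes of the lemma with the single line ``Change $\ell$ to $\ell+1$ or $-\ell$ in the definition of $S_k$,'' which is exactly the two reindexings you carry out (together with the evenness of $f$ when $a_j=b_j$, which the paper leaves implicit). Your added remark that $\sum_{j}\Re(a_j+b_j+1)>1$ gives absolute convergence and hence justifies the rearrangements is a sound, if routine, extra precaution.
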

\begin{proof}
Change $\ell$ to $\ell+1$ or $-\ell$ in the definition of $S_k$.
\end{proof}

Note that the sum $\sum_{k=0}^{p-1} S_k(t)$ appearing on the right hand side of \eqref{eq2} is just a Riemann sum with step size $1/p$ for the integral on the left-hand side.
Usually, these Riemann sums are only approximations of the integral but in our case they are equal to the integral.
The Poisson summation formula also gives us an integral representation of bilateral hypergeometric functions as follows.

\begin{thm}\label{4:t3}
Let $\sum_{j=1}^m \Re(a_j+b_j+1)>1$.
If $t\in[-\pi,\pi]$ then
\begin{equation} C_0\,
\Hhyp{m}{m}{-b_1,\dots,-b_m}{a_1+1,\dots,a_m+1}{-\expe^{-it}}=\int_{-\infty}^\infty f(x)\,\expe^{-ixt}g_m(x)\,\dd x
,\end{equation}
where
\begin{equation} C_0:= \frac{1}{\prod_{j=1}^m \Gamma(a_j+1)\Gamma(b_j+1)}
\end{equation}
and
\begin{equation} g_m(x):=\frac{\sin(m\pi x)}{\sin(\pi x)}=\begin{cases} 1+2\sum_{n=1}^{\frac12(m-1)} \cos(2n\pi x) & \text{if $m$ is odd,}\\
2\sum_{n=1}^{\frac12m} \cos((2n-1)\pi x) & \text{if $m$ is even.} \end{cases}
\end{equation}
\end{thm}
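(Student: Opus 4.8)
The plan is to reduce the identity to the Poisson summation formula (Theorem~\ref{3:Poisson}) applied with modulus $\omega=2\pi$, after expanding the weight $g_m$ into $m$ complex exponentials. First I would record the elementary identity
\[
g_m(x)=\frac{\sin(m\pi x)}{\sin(\pi x)}=\sum_{k=0}^{m-1}\expe^{i(m-1-2k)\pi x},
\]
which follows by writing $w=\expe^{i\pi x}$ and summing the geometric series in $\dfrac{w^m-w^{-m}}{w-w^{-1}}=\sum_{k=0}^{m-1}w^{m-1-2k}$. Multiplying by $f(x)\expe^{-ixt}$ and integrating termwise (legitimate since the sum is finite and $f\in L^1(\R)$ under the hypothesis $\sum_{j=1}^m\Re(a_j+b_j+1)>1$, $g_m$ being bounded) yields
\[
\int_{-\infty}^\infty f(x)\expe^{-ixt}g_m(x)\,\dd x=\sum_{k=0}^{m-1}F\bigl(t-(m-1-2k)\pi\bigr)=\sum_{n=0}^{m-1}F(t'+2\pi n),
\]
where $t':=t-(m-1)\pi$; the last step is simply the observation that the arguments $t-(m-1-2k)\pi$, $k=0,\dots,m-1$, form the arithmetic progression $t',\,t'+2\pi,\dots,\,t'+2(m-1)\pi$.

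Next I would promote this finite sum to a full Poisson sum, and here the hypothesis $t\in[-\pi,\pi]$ is essential. Writing $t'+2\pi n=t+(2n-m+1)\pi$, one checks that $t'+2\pi n\le-m\pi$ for $n\le-1$ and $t'+2\pi n\ge m\pi$ for $n\ge m$; hence by Lemma~\ref{4:l1} (which gives $F(s)=0$ for $|s|\ge m\pi$, boundary included) every term with $n\notin\{0,\dots,m-1\}$ vanishes, so
\[
\sum_{n=0}^{m-1}F(t'+2\pi n)=\sum_{n\in\Z}F(t'+2\pi n),
\]
an (indeed finite, hence convergent) sum. Now Theorem~\ref{3:Poisson} applies to $f$ with $\omega=2\pi$ at the point $t'$: its hypotheses hold because $f$ is continuous (in fact entire) with $|f(x)|\le K(1+|x|)^{-1-\epsilon}$ for $\epsilon:=\sum_{j}\Re(a_j+b_j+1)-1>0$, and because the left-hand sum above is finite. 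This gives
\[
\sum_{n\in\Z}F(t'+2\pi n)=\sum_{n\in\Z}f(n)\,\expe^{-int'}.
\]

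It remains to recognize the right-hand side. From the computation in the proof of Theorem~\ref{4:t2} with $k=0$ one has $f(n)=C_0(-1)^{mn}\prod_{j=1}^m\frac{(-b_j)_n}{(a_j+1)_n}$ for all $n\in\Z$, whence
\[
\sum_{n\in\Z}f(n)\,\expe^{-int'}=C_0\sum_{n\in\Z}\prod_{j=1}^m\frac{(-b_j)_n}{(a_j+1)_n}\bigl((-1)^m\expe^{-it'}\bigr)^n=C_0\,\Hhyp{m}{m}{-b_1,\dots,-b_m}{a_1+1,\dots,a_m+1}{(-1)^m\expe^{-it'}},
\]
the bilateral series converging absolutely on the unit circle precisely because $\sum_j\Re\bigl((-b_j)-(a_j+1)\bigr)=-\sum_j\Re(a_j+b_j+1)<-1$. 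Finally $(-1)^m\expe^{-it'}=(-1)^m(-1)^{m-1}\expe^{-it}=-\expe^{-it}$, which is the claimed identity. The one genuinely delicate point is the support bookkeeping in the second paragraph: one must verify that when $|t|=\pi$ the extreme arguments land exactly on $\pm m\pi$, where Lemma~\ref{4:l1} still forces $F=0$ thanks to the non-strict inequality; everything else is routine. An alternative that avoids re-invoking Poisson is to feed the progression $t'+2\pi n$ into Theorem~\ref{4:t2} with $p=m$ and exploit the quasiperiodicity $S_j(t+2\pi)=\expe^{-2\pi ij/m}S_j(t)$ together with orthogonality of the $m$-th roots of unity, which collapses $\sum_{n=0}^{m-1}F(t'+2\pi n)$ directly to $m\,S_0(t')$.
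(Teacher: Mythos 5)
Your proof is correct and follows essentially the same route as the paper: Poisson summation with $\omega=2\pi$ together with Lemma \ref{4:l1} to reduce the sum of translates of $F$ to finitely many terms, which are then matched with the expansion of $g_m$ into complex exponentials and with the identification $f(n)=C_0(-1)^{mn}\prod_j (-b_j)_n/(a_j+1)_n$ from the proof of Theorem \ref{4:t2}. The only difference is presentational: you handle both parities of $m$ at once via the shift $t'=t-(m-1)\pi$, whereas the paper treats $m$ odd directly and reduces $m$ even to it by the substitution $s=t+\pi$.
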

\begin{proof}
This time we apply Theorem \ref{3:Poisson} with $\omega=2\pi$. 
Suppose first that $t\in[-\pi,\pi]$ and $m$ is odd.
Then the points $t+2n\pi$ lie outside $(-m\pi,m\pi)$ and so $F(t+2n\pi)=0$ unless $n=-\frac12(m-1),\dots,\frac12(m-1)$. 
So we find
\[ \sum_{n\in\Z} f(n) \expe^{-i nt} =\sum_{n\in\Z} F(t+2\pi n)=\sum_{n=-\frac12(m-1)}^{\frac12(m-1)} F(t+2\pi n)
=\int_{-\infty}^\infty f(x)\,\expe^{-itx}g_m(x)\,\dd x .\]
As in the proof of Theorem \ref{4:t2} we see that
\[  \sum_{n\in\Z} f(n) \expe^{-i nt}=m S_0(t)\]
and the desired statement follows.

Now let $s\in[0,2\pi]$, and let $m$ be even.
Then the points $s+2n\pi$ lie outside $(-m\pi,m\pi)$ unless $n=-\frac12m,\dots,\frac12m-1$. 
Now Theorem \ref{3:Poisson} yields
\[ \sum_{n\in\Z} f(n) \expe^{-i ns} =\sum_{n=-\frac12 m}^{\frac12m-1} F(s+2\pi n)
=\int_{-\infty}^\infty f(x)\,\expe^{-isx} \expe^{i\pi x}g_m(x)\,\dd x \]
and this is the claimed equation after substitution $s=t+\pi$.
\end{proof}

\begin{thm}\label{4:t4}
Let $\sum_{j=1}^m \Re(a_j+b_j+1)>1$.
Then
\begin{align}
\int_{-\infty}^\infty f(x)\frac{\sin((m-1)\pi x)}{\sin(\pi x)}\,\dd x&=
 C_0\,
\Hhyp{m}{m}{-b_1,\dots,-b_m}{a_1+1,\dots,a_m+1}{1},
 \\
\int_{-\infty}^\infty f(x)\frac{\sin(m\pi x)}{\sin(\pi x)}\,\dd x&=
 C_0\,
\Hhyp{m}{m}{ -b_1,\dots,-b_m}{a_1+1,\dots,a_m+1}{-1}.
\end{align}
\end{thm}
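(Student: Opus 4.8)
The plan is to obtain both identities from Theorem~\ref{4:t3} by specializing $t$, together with Lemma~\ref{4:l1}. I would first record that the right-hand sides are meaningful: taking $c_j=-b_j$ and $d_j=a_j+1$ one has $\sum_{j=1}^m\Re(c_j-d_j)=-\sum_{j=1}^m\Re(a_j+b_j+1)<-1$, so by the convergence discussion in Section~\ref{sec2} the ${}_mH_m$ series with these parameters converges absolutely everywhere on the unit circle $|z|=1$, in particular at $z=1$ and $z=-1$, and depends continuously on $z$ there. Under the standing hypothesis $\sum_j\Re(a_j+b_j+1)>1$ we have $f\in L^1(\R)$, while $g_{m-1}$ and $g_m$ are bounded, so every integral below converges absolutely.

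For the second identity I would set $t=0$ in Theorem~\ref{4:t3}. Then $-\expe^{-it}=-1$ and $\expe^{-ixt}\equiv1$, so the identity of Theorem~\ref{4:t3} becomes
\[
C_0\,\Hhyp{m}{m}{-b_1,\dots,-b_m}{a_1+1,\dots,a_m+1}{-1}=\int_{-\infty}^\infty f(x)\,g_m(x)\,\dd x ,
\]
which, since $g_m(x)=\sin(m\pi x)/\sin(\pi x)$, is exactly the second asserted equation.

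For the first identity I would set $t=\pi$ in Theorem~\ref{4:t3} (this endpoint is within its hypothesis; alternatively, the equality at $t=\pi$ follows from the case $|t|<\pi$, since both sides are continuous in $t$). Because $-\expe^{-i\pi}=1$, Theorem~\ref{4:t3} yields
\[
C_0\,\Hhyp{m}{m}{-b_1,\dots,-b_m}{a_1+1,\dots,a_m+1}{1}=\int_{-\infty}^\infty f(x)\,\expe^{-i\pi x}\,\frac{\sin(m\pi x)}{\sin(\pi x)}\,\dd x .
\]
From the Dirichlet-kernel expansion $\sin(m\pi x)/\sin(\pi x)=\sum_{j=0}^{m-1}\expe^{i(m-1-2j)\pi x}$, multiplying by $\expe^{-i\pi x}$ and peeling off the term with $j=m-1$ gives the elementary identity
\[
\expe^{-i\pi x}\,\frac{\sin(m\pi x)}{\sin(\pi x)}=\frac{\sin((m-1)\pi x)}{\sin(\pi x)}+\expe^{-im\pi x} .
\]
Substituting this into the previous display and splitting the integral, the contribution of the extra term is $\int_{-\infty}^\infty f(x)\,\expe^{-im\pi x}\,\dd x=F(m\pi)$, which is $0$ by Lemma~\ref{4:l1}; what remains is $\int_{-\infty}^\infty f(x)\,\sin((m-1)\pi x)/\sin(\pi x)\,\dd x$, and this is the first asserted equation. (Taking $t=-\pi$ instead works in the same way, with $F(-m\pi)=0$.)

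Given Theorem~\ref{4:t3} and Lemma~\ref{4:l1} there is no substantial obstacle; the only points that need a little care are the trigonometric bookkeeping---arranging that exactly the single exponential $\expe^{-im\pi x}$ is removed, so that the shorter kernel $g_{m-1}$ is precisely what is left---and the appeal to Theorem~\ref{4:t3} at $t=\pm\pi$, which is harmless because the terms $F(\pm m\pi)$ that might otherwise interfere vanish by Lemma~\ref{4:l1}.
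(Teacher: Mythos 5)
Your proof is correct and follows exactly the paper's own route: Theorem~\ref{4:t3} at $t=0$ gives the second identity directly, and at $t=\pi$ gives the first after the identity $\expe^{-i\pi x}\sin(m\pi x)/\sin(\pi x)=\sin((m-1)\pi x)/\sin(\pi x)+\expe^{-im\pi x}$ and the vanishing of $F(m\pi)$ from Lemma~\ref{4:l1}. The paper states this in one line; you have merely written out the trigonometric bookkeeping it leaves implicit.
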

\begin{proof}
This follows from Theorem \ref{4:t3} with $t=\pi$ and $t=0$. If $t=\pi$ we are also using Lemma \ref{4:l1}.
\end{proof}

\section{Special cases}\label{sec5}
\subsection{The case \texorpdfstring{$m=1$}{m=1}}

If we combine \eqref{4:fourierFj} 
with Theorem \ref{4:t2} for $p=m=1$ then we find
\[ 
\Hhyp{1}{1}{-b}{a+1}{-\expe^{-it}}
=\frac{\Gamma(a+1)\Gamma(b+1)}{\Gamma(a+b+1)} (2\cos\tfrac12t)^{a+b} \expe^{-\tfrac12 it(b-a)}
\quad\text{for $t\in[-\pi,\pi]$}.
\]
The above equation is equivalent to 
\begin{equation*}
\Hhyp11{a}{b}{-\expe^{-it}}=
\frac{\Gamma(1-a)\Gamma(b)}{\Gamma(b-a)}
\expe^{\frac12 it(a+b-1)}(2\cos(\tfrac12 t))^{b-a-1},
\end{equation*}
where $t\in[-\pi,\pi]$, $a\not\in\N$, $b\not\in-\N_0$.
One is able to derive the above equation and the following related equation
\begin{equation*}
\Hhyp11{a}{b}{\expe^{it}}=
\frac{\Gamma(1-a)\Gamma(b)}{\Gamma(b-a)}
\expe^{\frac12 i(\pi-t)(a+b-1)}(2\sin(\tfrac12 t))^{b-a-1}\quad\text{for $t\in[0,2\pi]$,}
\end{equation*}
with the same constraints on $t$ by starting with Ramanujan's formula \cite[\href{http://dlmf.nist.gov/17.8.E2}{(17.8.2)}]{NIST:DLMF}
\begin{equation}\label{5:1psi1}
\qpsihyp11{q^a}{q^b}{q,z}=\frac{(q,q^{b-a},q^az,q^{1-a}/z;q)_\infty}{(q^b,q^{1-a},z,q^{b-a}/z;q)_\infty}
=\frac{\Gamma_q(b)\Gamma_q(1-a)}{\Gamma_q(b-a)} \frac{(q^az,q^{1-a}/z;q)_\infty}{(z,q^{b-a}/z;q)_\infty},
\end{equation}
where $q^{\Re(b-a)}<|z|<1$. 
We choose $0<\tau<\Re(b-a)$, set $z=-q^\tau\expe^{-it}$ and $z=q^\tau\expe^{it}$ in \eqref{5:1psi1}, respectively, and let $q\to1^-$.
On the left-hand side we apply Theorem \ref{2:t1}. On the right-hand side we use \eqref{gammalimit} and the 
limit
\begin{equation}\label{5:limit}
 \lim_{q\to 1^-} \frac{(q^\alpha z;q)_\infty}{(q^\beta z;q)_\infty}=(1-z)^{\beta-\alpha}
\quad \text{for $0<|z|\le 1$.}
\end{equation}
The limit \eqref{5:limit} can be derived from the $q$-binomial theorem \cite[\href{http://dlmf.nist.gov/17.2.E37}{(17.2.37)}]{NIST:DLMF}; see also \cite[Theorem 10.2.4]{AAR}. 
Two consequences of the above results are 
\begin{equation*}
\Hhyp11{a}{b}{-1}=
2^{b-a-1}\frac{\Gamma(1-a)\Gamma(b)}{\Gamma(b-a)},
\end{equation*}
and ${}_1H_1(a;b;1)=0$.

\subsection{The case \texorpdfstring{$m=2$}{m=2}}

It follows from Theorem \ref{4:t4} that
\begin{equation}\label{5:eqm2}
 \int_{-\infty}^\infty \frac{\dd x}{\prod_{j=1}^2 \Gamma(a_j+1+x)\Gamma(b_j+1-x)}
=  C_0\, 
\Hhyp{2}{2}{ -b_1,-b_2 }{a_1+1,a_2+1}{1},
\end{equation}
where
\[ C_0=\frac{1}{\prod_{j=1}^2 \Gamma(a_j+1)\Gamma(b_j+1)} .\]
We have the generalized Gauss theorem \cite[(6.1.2.1)]{Slater66}
\begin{multline}\label{5:2H2}
\Hhyp{2}{2}{a,b}{c,d}{1}
 =\Gamma\left[\begin{array}{c} c,d, 1-a,1-b,c+d-a-b-1\\ c-a,d-a,c-b,d-b\end{array}\right]\\
 :=\frac{\Gamma(c)\Gamma(d)\Gamma(1-a)\Gamma(1-b)\Gamma(c+d-a-b-1)}{\Gamma(c-a)\Gamma(d-a)\Gamma(c-b)\Gamma(d-b)},
\end{multline}
valid for $\Re(c+d-a-b-1)>0$.
Note that the above formula generalizes the Gauss formula \cite[\href{http://dlmf.nist.gov/15.4.E20}{(15.4.20)}]{NIST:DLMF} with a choice of $d=1$ 
using \eqref{hyp1}.
Using \eqref{5:2H2} in \eqref{5:eqm2} we find Ramanujan's integral \cite[(7.1)]{Ramanujan1920}
\begin{equation}\label{5:Ram1}
\hspace{-0.2cm}\int_{-\infty}^\infty \frac{\dd x}{\prod_{j=1}^2 \Gamma(a_j\!+\!1\!+\!x)\Gamma(b_j\!+\!1\!-\!x)}
  = \frac{\Gamma(a_1+b_1+a_2+b_2+1)}
{\Gamma(a_1+b_1+1)\Gamma(a_1+b_2+1)\Gamma(a_2+b_1+1)\Gamma(a_2+b_2+1)}.
\end{equation}
It is interesting to note that Ramanujan obtained \eqref{5:Ram1} directly from \eqref{4:fourierFj}, \eqref{4:fourierF} so we have a proof of \eqref{5:2H2}.

It follows from Theorem \ref{4:t4} that 
\begin{equation}\label{Ram2}
 \int_{-\infty}^\infty \frac{2\cos(\pi x)\,\dd x}{\prod_{j=1}^2 \Gamma(a_j+1+x)\Gamma(b_j+1-x)}
=  C_0\, 
\Hhyp{2}{2}{ -b_1,-b_2}{a_1+1,a_2+1 }{-1}.
\end{equation}
In general, this equation cannot be simplified further because we do not have a summation formula for the ${}_2H_2$ series at $z=-1$.
However, Ramanujan \cite[(7.2)]{Ramanujan1920} used \eqref{4:fourierFj}, \eqref{4:fourierF} to show that under the assumption  
\begin{equation}\label{5:ass} 
a_1-b_1=a_2-b_2
\end{equation}
we have
\begin{equation*}
\int_{-\infty}^\infty \frac{\expe^{-i\pi x}\,\dd x}{\prod_{j=1}^2 \Gamma(a_j+1+x)\Gamma(b_j+1-x)}
=\frac{\expe^{-\tfrac12 i\pi(b_1-a_1)}}{2\Gamma(\tfrac12(a_1+b_1)+1)\Gamma(\tfrac12(a_2+b_2)+1)\Gamma(a_1+b_2+1)} .
\end{equation*}
By comparing this result with \eqref{Ram2} we obtain
\begin{equation*}
\Hhyp{2}{2}{-b_1,-b_2}{a_1+1,a_2+1}{-1}
 =\cos(\tfrac12(b_1-a_1)\pi) \frac{\Gamma(a_1+1)\Gamma(b_1+1)\Gamma(a_2+1)\Gamma(b_2+1)}
{\Gamma(\tfrac12(a_1+b_1)+1)\Gamma(\tfrac12(a_2+b_2)+1)\Gamma(a_1+b_2+1)} 
\end{equation*}
provided \eqref{5:ass} holds.
 
\subsection{The case \texorpdfstring{$m=3$}{m=3}}

It follows from Theorem \ref{4:t4} that
\[ \int_{-\infty}^\infty \frac{2\cos(\pi x)\,\dd x}{\prod_{j=1}^3 \Gamma(1+a_j+x)\Gamma(1+b_j-x)}
= C_0\, 
\Hhyp{3}{3}{ -b_1,-b_2,-b_3 }{a_1+1,a_2+1,a_3+1}{1},
\]
where
\[ C_0=\frac{1}{\prod_{j=1}^3 \Gamma(a_j+1)\Gamma(b_j+1)} .\]
According to \cite[(6.1.2.6)]{Slater66} we have the following formula for a well-poised ${}_3H_3(1)$,
\begin{eqnarray}
&&\hspace{-0.5cm}
\Hhyp{3}{3}{b,c,d}{1+a-b,1+a-c,1+a-d}{1}
\nonumber\\
&&= \Gamma\left[\begin{array}{c} 1-b,1-c,1-d,1+a-b,1+a-c,1+a-d,1+\tfrac12a,1-\tfrac12a,1+\tfrac32a-b-c-d\\
1+a-c-d,1+a-b-d,1+a-b-c,1+\tfrac12a-b,1+\tfrac12a-c,1+\tfrac12a-d,1+a,1-a\end{array}
\right],\label{5:3H3}
\end{eqnarray}
for $\Re(1+\tfrac32a-b-c-d)>0$. Therefore, setting $b_1=-b$, $b_2=-c$, $b_3=-d$, we obtain the following result.

\begin{thm}\label{5:tm3a}
Let  $\Re(1+\frac32a+b_1+b_2+b_3)>0$. Then 
\begin{equation}\label{5:m3a}
 \int_{-\infty}^\infty \frac{2\cos(\pi x)\,\dd x}{\prod_{j=1}^3 \Gamma(b_j+1+a+x)\Gamma(b_j+1-x)}
=\frac{\cos(\tfrac12\pi a)\,\Gamma(1+\tfrac32a+b_1+b_2+b_3)}{\prod_{j=1}^3 \Gamma(1+\tfrac12 a+b_j)\prod_{1\le i<j\le 3} \Gamma(1+a+b_i+b_j)}.
\end{equation}
\end{thm}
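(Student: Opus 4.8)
The strategy is to specialize the machinery already developed, in particular Theorem~\ref{4:t4} in the case $m=3$, and then invoke the summation formula \eqref{5:3H3} for the well-poised ${}_3H_3$ at $z=1$. First I would apply the first identity of Theorem~\ref{4:t4} with $m=3$, which asserts
\[
\int_{-\infty}^\infty f(x)\,\frac{\sin(2\pi x)}{\sin(\pi x)}\,\dd x = C_0\,\Hhyp{3}{3}{-b_1,-b_2,-b_3}{a_1+1,a_2+1,a_3+1}{1},
\]
where $f(x)=\prod_{j=1}^3\bigl(\Gamma(a_j+1+x)\Gamma(b_j+1-x)\bigr)^{-1}$ and $C_0=\prod_{j=1}^3\bigl(\Gamma(a_j+1)\Gamma(b_j+1)\bigr)^{-1}$. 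Since $g_3(x)=\sin(3\pi x)/\sin(\pi x)=1+2\cos(2\pi x)$, one should be careful which of the two formulas in Theorem~\ref{4:t4} produces the kernel $2\cos(\pi x)$; writing $2\cos(\pi x)=\sin(2\pi x)/\sin(\pi x)$ shows it is exactly the $m=2$-style kernel $g_2$, i.e.\ the first formula of Theorem~\ref{4:t4} applied with $m=3$, whose left-hand side is $\int f(x)\,\frac{\sin(2\pi x)}{\sin(\pi x)}\,\dd x=\int 2\cos(\pi x)f(x)\,\dd x$. This matches the integrand on the left of \eqref{5:m3a}.

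Next I would impose the well-poised specialization. In \eqref{5:3H3} the lower parameters are $1+a-b,1+a-c,1+a-d$; matching against the lower parameters $a_j+1$ of the ${}_3H_3$ coming from Theorem~\ref{4:t4}, and using the substitution $b_1=-b$, $b_2=-c$, $b_3=-d$ already indicated in the text, forces $a_j = a - b_j$ for $j=1,2,3$, equivalently $a_j+b_j=a$ for all $j$. This is precisely the choice made in the statement, where the integrand is $\prod_{j=1}^3\bigl(\Gamma(b_j+1+a+x)\Gamma(b_j+1-x)\bigr)^{-1}$. With $a_j=a-b_j$, the convergence hypothesis $\sum_j\Re(a_j+b_j+1)=3\Re(a)/?$—wait, more carefully, $\sum_j\Re(a_j+b_j+1) = \sum_j\Re(a+1)=3\Re(a+1)$ is not the relevant condition; rather the summability condition for \eqref{5:3H3} is $\Re(1+\tfrac32a-b-c-d)>0$, which becomes $\Re(1+\tfrac32 a+b_1+b_2+b_3)>0$, exactly the hypothesis of the theorem. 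I would also check that this condition implies $\sum_{j=1}^3\Re(a_j+b_j+1)>1$, i.e.\ $3\Re(a)>-2$; this should follow from $\Re(\tfrac32 a) > -1-\Re(b_1+b_2+b_3)$ under mild genericity, though one may simply note that Theorem~\ref{4:t4} requires it and state it as a standing assumption, or verify it is subsumed.

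Finally I would substitute the closed form \eqref{5:3H3} (rewritten in the $b_j$ variables) for the ${}_3H_3(1)$, and multiply by $C_0$. The numerator and denominator of \eqref{5:3H3} involve the fifteen gamma factors
\[
\frac{\Gamma(1-b)\Gamma(1-c)\Gamma(1-d)\Gamma(1+a-b)\Gamma(1+a-c)\Gamma(1+a-d)\Gamma(1+\tfrac12 a)\Gamma(1-\tfrac12 a)\Gamma(1+\tfrac32 a-b-c-d)}{\Gamma(1+a-c-d)\Gamma(1+a-b-d)\Gamma(1+a-b-c)\Gamma(1+\tfrac12 a-b)\Gamma(1+\tfrac12 a-c)\Gamma(1+\tfrac12 a-d)\Gamma(1+a)\Gamma(1-a)},
\]
and with $b=-b_1$, etc., one has $\Gamma(1-b)=\Gamma(1+b_1)$, $\Gamma(1+a-b)=\Gamma(1+a+b_1)$, $\Gamma(1+a-b-c)=\Gamma(1+a+b_1+b_2)$, and so on. Multiplying by $C_0=\prod_j\bigl(\Gamma(1+a_j)\Gamma(1+b_j)\bigr)^{-1}=\prod_j\bigl(\Gamma(1+a+b_j)\Gamma(1+b_j)\bigr)^{-1}$ cancels the three factors $\Gamma(1+b_j)=\Gamma(1-b),\Gamma(1-c),\Gamma(1-d)$ against the $C_0$ and cancels $\Gamma(1+a+b_j)$ against $\Gamma(1+a-b)$ in the numerator. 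The main bookkeeping obstacle—and really the only nontrivial point—is the telescoping of the ratio $\Gamma(1+\tfrac12 a)\Gamma(1-\tfrac12 a)/\bigl(\Gamma(1+a)\Gamma(1-a)\bigr)$ down to the cosine: using the reflection formula $\Gamma(z)\Gamma(1-z)=\pi/\sin(\pi z)$ in the forms $\Gamma(1+\tfrac12a)\Gamma(1-\tfrac12 a)=\tfrac12 a\,\Gamma(\tfrac12 a)\Gamma(1-\tfrac12 a)=\dfrac{\tfrac12\pi a}{\sin(\tfrac12\pi a)}$ and $\Gamma(1+a)\Gamma(1-a)=\dfrac{\pi a}{\sin(\pi a)}$, the quotient becomes $\dfrac{\sin(\pi a)}{2\sin(\tfrac12\pi a)}=\cos(\tfrac12\pi a)$ by the double-angle identity. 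This produces exactly the factor $\cos(\tfrac12\pi a)$ in \eqref{5:m3a}. The remaining six gamma factors $\Gamma(1+\tfrac12 a-b),\Gamma(1+\tfrac12 a-c),\Gamma(1+\tfrac12 a-d)$ (in the denominator) and $\Gamma(1+a-b-c),\Gamma(1+a-b-d),\Gamma(1+a-c-d)$ (also denominator) become $\prod_{j=1}^3\Gamma(1+\tfrac12 a+b_j)$ and $\prod_{1\le i<j\le 3}\Gamma(1+a+b_i+b_j)$ respectively, while $\Gamma(1+\tfrac32 a-b-c-d)=\Gamma(1+\tfrac32 a+b_1+b_2+b_3)$ remains in the numerator. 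Collecting everything yields the right-hand side of \eqref{5:m3a}, completing the proof.
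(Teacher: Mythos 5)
Your proposal follows exactly the paper's route: Theorem \ref{4:t4} with $m=3$ supplies the kernel $\sin(2\pi x)/\sin(\pi x)=2\cos(\pi x)$ and the ${}_3H_3(1)$, which is then summed by the well-poised formula \eqref{5:3H3} with $b_1=-b$, $b_2=-c$, $b_3=-d$, and your gamma-function bookkeeping (including the reflection-formula reduction of $\Gamma(1+\tfrac12a)\Gamma(1-\tfrac12a)/(\Gamma(1+a)\Gamma(1-a))$ to $\cos(\tfrac12\pi a)$) is correct. The only blemish is the transient sign slip ``$a_j=a-b_j$, equivalently $a_j+b_j=a$'' (the matching actually forces $a_j=a+b_j$, i.e.\ $a_j-b_j=a$), which also muddles your discussion of the integrability condition --- with the correct relation one finds $\sum_{j=1}^3\Re(a_j+b_j+1)-1=2\,\Re(1+\tfrac32a+b_1+b_2+b_3)$, so the hypothesis of Theorem \ref{4:t4} is literally equivalent to the stated hypothesis and nothing extra need be assumed --- but your subsequent computations all use the correct relation and the conclusion stands.
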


At this point an interesting question arises. Can we prove Theorem \ref{5:tm3a} based on \eqref{4:fourierFj}, \eqref{4:fourierF}  without using \eqref{5:3H3}?
If this is possible we have a new proof of \eqref{5:3H3}.
We can prove Theorem \ref{5:tm3a} with $a=0$ without using \eqref{5:3H3} if we can show the following equation:
\begin{align*}
&\int_{-\pi}^\pi \int_{-s_1}^\pi (2\cos\tfrac12s_1)^{2b_1}(2\cos\tfrac12s_2)^{2b_2}(2\sin(\tfrac12(s_1+s_2))^{2b_3}\,\dd s_2\,\dd s_1\\
&=\frac{2\pi^2 \Gamma(2b_1+1)\Gamma(2b_2+1)\Gamma(2b_3+1)\Gamma(b_1+b_2+b_3+1)}{\Gamma(b_1+1)\Gamma(b_2+1)\Gamma(b_3+1)\Gamma(b_2+b_2+1)\Gamma(b_1+b_3+1)
\Gamma(b_2+b_3+1)} .
\end{align*}

It follows from Theorem \ref{4:t4} that
\[ \int_{-\infty}^\infty \frac{(1+2\cos(2\pi x))\,\dd x}{\prod_{j=1}^3 \Gamma(1+a_j+x)\Gamma(1+b_j-x)}
= C_0\, 
\Hhyp{3}{3}{-b_1,-b_2,-b_3}{a_1+1,a_2+1,a_3+1}{-1},
\]
where
\[ C_0=\frac{1}{\prod_{j=1}^3 \Gamma(a_j+1)\Gamma(b_j+1)} .\]
Set $a_j=c_j+\frac14$, $b_j=c_j-\frac14$ and substitute $x=y-\frac14$. Then 
\[ \frac{1+2\cos(2\pi x)}{\prod_{j=1}^3 \Gamma(1+a_j+x)\Gamma(1+b_j-x)}
=\frac{1+2\sin(2\pi y)}{\prod_{j=1}^3 \Gamma(1+c_j+y)\Gamma(1+c_j-y)},
\]
so we obtain the following result using the observation that
$\int_{-\infty}^\infty f(x)\,\dd x=0$ when $f(x)$ is an odd function.

\begin{thm}\label{5:tm3b}
Let $\Re(1+c_1+c_2+c_3)>0$. Then
\begin{equation}\label{5:m3b}
\int_{-\infty}^\infty \frac{\dd y}{\prod_{j=1}^3 \Gamma(1+c_j+y)\Gamma(1+c_j-y)}=
C\,
\Hhyp{3}{3}{\frac14-c_1,\frac14-c_2,\frac14-c_3}{\frac54+c_1,\frac54+c_2,\frac54+c_3}{-1},
\end{equation}
where
\begin{equation} 
C=\frac{1}{\prod_{j=1}^3 \Gamma(c_j+\tfrac54)\Gamma(c_j+\tfrac34)} .
\end{equation}
\end{thm}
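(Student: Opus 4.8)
The plan is to derive Theorem \ref{5:tm3b} directly from the second identity of Theorem \ref{4:t4} by specializing the parameters and then performing an affine change of variables in the integral. First I would apply Theorem \ref{4:t4} (second equation) to the $m=3$ case, obtaining
\[
\int_{-\infty}^\infty f(x)\,\frac{\sin(3\pi x)}{\sin(\pi x)}\,\dd x
= C_0\,\Hhyp{3}{3}{-b_1,-b_2,-b_3}{a_1+1,a_2+1,a_3+1}{-1},
\]
where $f(x)=\prod_{j=1}^3\bigl(\Gamma(1+a_j+x)\Gamma(1+b_j-x)\bigr)^{-1}$ and $C_0=\prod_{j=1}^3\bigl(\Gamma(a_j+1)\Gamma(b_j+1)\bigr)^{-1}$. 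Using the even-$m$ branch of $g_m$ for $m=3$ is not applicable; instead I record that for $m=3$ (odd) $g_3(x)=1+2\cos(2\pi x)$, which matches the numerator appearing in the displayed integral just above the statement.

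Next I would set $a_j=c_j+\tfrac14$ and $b_j=c_j-\tfrac14$ for $j=1,2,3$, so that $a_j-b_j=\tfrac12$, $a_j+b_j=2c_j$, and the convergence hypothesis $\sum_j\Re(a_j+b_j+1)>1$ becomes $\Re(1+c_1+c_2+c_3)>0$ — note $2(c_1+c_2+c_3)+3>1$ is equivalent to $c_1+c_2+c_3>-1$, i.e.\ $\Re(1+c_1+c_2+c_3)>0$. Then I substitute $x=y-\tfrac14$. Under this shift, $\Gamma(1+a_j+x)=\Gamma(1+c_j+y)$ and $\Gamma(1+b_j-x)=\Gamma(1+c_j-y)$, so the product $f(x)$ becomes $\prod_{j=1}^3\bigl(\Gamma(1+c_j+y)\Gamma(1+c_j-y)\bigr)^{-1}$, which is an even function of $y$. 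For the numerator, $1+2\cos(2\pi x)=1+2\cos(2\pi y-\tfrac{\pi}{2})=1+2\sin(2\pi y)$. The constant $C_0$ becomes $C=\prod_{j=1}^3\bigl(\Gamma(c_j+\tfrac54)\Gamma(c_j+\tfrac34)\bigr)^{-1}$, and the ${}_3H_3$ parameters transform: $-b_j=\tfrac14-c_j$ and $a_j+1=\tfrac54+c_j$, exactly matching the statement.

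The last step is to split the transformed integral $\int_{-\infty}^\infty \dfrac{(1+2\sin(2\pi y))\,\dd y}{\prod_{j=1}^3\Gamma(1+c_j+y)\Gamma(1+c_j-y)}$ into the contribution of the constant $1$ and the contribution of $2\sin(2\pi y)$. Since the denominator is even in $y$ and $\sin(2\pi y)$ is odd, the product $\dfrac{2\sin(2\pi y)}{\prod_{j=1}^3\Gamma(1+c_j+y)\Gamma(1+c_j-y)}$ is an odd $L^1$ function, hence integrates to zero over $\R$. Therefore only the $1$-term survives, yielding
\[
\int_{-\infty}^\infty \frac{\dd y}{\prod_{j=1}^3\Gamma(1+c_j+y)\Gamma(1+c_j-y)}
= C\,\Hhyp{3}{3}{\tfrac14-c_1,\tfrac14-c_2,\tfrac14-c_3}{\tfrac54+c_1,\tfrac54+c_2,\tfrac54+c_3}{-1},
\]
which is exactly \eqref{5:m3b}. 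There is essentially no obstacle here: the only point requiring a word of care is verifying that the oddness argument is legitimate, i.e.\ that the integrand is genuinely integrable so that the principal-value cancellation is an honest Lebesgue statement — this follows from $f_j(x)={\mathcal O}(|x|^{-\Re(a_j+b_j)-1})$ stated earlier, together with $\Re(a_j+b_j)=2\Re c_j$ and the global hypothesis, which guarantees the triple product decays fast enough at infinity. The remaining bookkeeping (tracking the phase in $\cos(2\pi x-\tfrac{\pi}{2})$ and confirming the shifts in the $\Gamma$-arguments) is routine.
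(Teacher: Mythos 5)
Your proposal is correct and follows exactly the paper's own argument: apply the second identity of Theorem \ref{4:t4} with $m=3$ (so $g_3(x)=1+2\cos(2\pi x)$), set $a_j=c_j+\tfrac14$, $b_j=c_j-\tfrac14$, substitute $x=y-\tfrac14$ to turn the cosine into $\sin(2\pi y)$, and discard the odd part of the integrand. Your additional remark verifying that the oddness cancellation is a genuine Lebesgue statement (via the decay estimate and the hypothesis $\Re(1+c_1+c_2+c_3)>0$) is a welcome bit of extra care but does not change the route.
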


\subsection{The case \texorpdfstring{$m=4$}{m=4}}

It follows from Theorem \ref{4:t4} that
\[
 \int_{-\infty}^\infty \frac{(1+2\cos(2\pi x))\,\dd x}{\prod_{j=1}^4 \Gamma(1+a_j+x)\Gamma(1+b_j-x)}
= C_0\,  
\Hhyp{4}{4}{-b_1,\dots,-b_4}{a_1+1,\dots,a_4+1}{1},
\]
where
\[ C_0=\frac{1}{\prod_{j=1}^4 \Gamma(a_j+1)\Gamma(b_j+1)} .\]
When we set $a_j=c_j+\frac14$, $b_j=c_j-\frac14$ and substitute $x=y-\tfrac14$ we find the following result.

\begin{thm}\label{5:tm4a}
Let $\Re(c_1+c_2+c_3+c_4)>-\tfrac32$. Then
\begin{equation} \label{5:m4a}
\int_{-\infty}^\infty \frac{\dd y}{\prod_{j=1}^4 \Gamma(1+c_j+y)\Gamma(1+c_j-y)}
=C\,  
\Hhyp{4}{4}{ \frac14-c_1,\dots,\frac14-c_4 }{c_1+\tfrac54,\dots,c_4+\tfrac54}{1},
\end{equation}
where
\begin{equation} C=\frac{1}{\prod_{j=1}^4 \Gamma(c_j+\tfrac54)\Gamma(c_j+\tfrac34)} .
\end{equation}
\end{thm}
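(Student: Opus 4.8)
The plan is to specialize Theorem \ref{4:t4} to $m=4$, translate the integration variable, and then discard an odd summand.

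First I would invoke the first identity of Theorem \ref{4:t4} with $m=4$. Since the relevant weight is $g_{m-1}(x)=g_3(x)=\frac{\sin(3\pi x)}{\sin(\pi x)}=1+2\cos(2\pi x)$ by the formula recorded in Theorem \ref{4:t3}, this yields
\[ \int_{-\infty}^\infty \frac{(1+2\cos(2\pi x))\,\dd x}{\prod_{j=1}^4\Gamma(1+a_j+x)\Gamma(1+b_j-x)}=C_0\,\Hhyp{4}{4}{-b_1,\dots,-b_4}{a_1+1,\dots,a_4+1}{1}, \]
valid whenever $\sum_{j=1}^4\Re(a_j+b_j+1)>1$, with the $L^1$-integrability and the vanishing of the far tails already built into Theorem \ref{4:t4} via Lemma \ref{4:l1} and analytic continuation.

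Next I would set $a_j=c_j+\tfrac14$ and $b_j=c_j-\tfrac14$. Then $a_j+b_j=2c_j$, so the hypothesis becomes $\Re(c_1+c_2+c_3+c_4)>-\tfrac32$ as stated; the constant on the right is $C_0=\prod_{j=1}^4\Gamma(c_j+\tfrac54)^{-1}\Gamma(c_j+\tfrac34)^{-1}=C$; and the numerator parameters become $-b_j=\tfrac14-c_j$, $a_j+1=c_j+\tfrac54$. Performing the shift $x=y-\tfrac14$ (a harmless translation of an absolutely convergent integral) turns the Gamma arguments into $1+c_j+y$ and $1+c_j-y$, and turns the trigonometric factor into $1+2\cos\bigl(2\pi y-\tfrac\pi2\bigr)=1+2\sin(2\pi y)$. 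Hence
\[ \int_{-\infty}^\infty \frac{(1+2\sin(2\pi y))\,\dd y}{\prod_{j=1}^4\Gamma(1+c_j+y)\Gamma(1+c_j-y)}=C\,\Hhyp{4}{4}{\tfrac14-c_1,\dots,\tfrac14-c_4}{c_1+\tfrac54,\dots,c_4+\tfrac54}{1}. \]

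Finally I would note that $\bigl(\prod_{j=1}^4\Gamma(1+c_j+y)\Gamma(1+c_j-y)\bigr)^{-1}$ is even in $y$ (the substitution $y\mapsto-y$ merely swaps the two Gamma factors attached to each $c_j$), so its product with the odd function $2\sin(2\pi y)$ is an odd, integrable function with vanishing integral over $\R$. Dropping that term leaves precisely \eqref{5:m4a}. I do not expect any genuine obstacle here: the whole argument mirrors the proof of Theorem \ref{5:tm3b}, and the only point needing a moment's care is the phase identity $\cos\bigl(2\pi(y-\tfrac14)\bigr)=\sin(2\pi y)$ together with the parity bookkeeping that eliminates the sine contribution.
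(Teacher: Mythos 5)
Your proposal is correct and follows exactly the paper's route: apply the first identity of Theorem \ref{4:t4} with $m=4$, substitute $a_j=c_j+\tfrac14$, $b_j=c_j-\tfrac14$, $x=y-\tfrac14$, and discard the odd term $2\sin(2\pi y)$ times the even denominator. Your explicit verification of the phase identity and the parity argument simply spells out what the paper leaves implicit (mirroring its treatment of the $m=3$ case).
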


Consider Bailey's bilateral very-well-poised ${}_6\psi_6$ summation \cite[\href{http://dlmf.nist.gov/17.8.E7}{(17.8.7)}]{NIST:DLMF}
\begin{equation}\label{5:6psi6}
\qpsihyp66{\pm qa^\frac12,b,c,d,e}{\pm a^\frac12,\frac{qa}{b},\frac{qa}{c},\frac{qa}{d},\frac{qa}{e}}{q,\frac{qa^2}{bcde}}
=\frac{(q,qa,\frac{q}{a},\frac{qa}{bc},\frac{qa}{bd},\frac{qa}{be},\frac{qa}{cd},\frac{qa}{ce},\frac{qa}{de};q)_\infty}
{(\frac{q}{b},\frac{q}{c},\frac{q}{d},\frac{q}{e},\frac{qa}{b},\frac{qa}{c},\frac{qa}{d},\frac{qa}{e},\frac{qa^2}{bcde};q)_\infty},
\end{equation}
where $|qa^2|<|bcde|$.
Setting $e=-a^{1/2}$ and replacing $a,b,c,d$ by $q^a, q^b, q^c, q^d$, respectively, and taking the limit $q\to1^-$
using Theorem \ref{2:t1}, \eqref{gammalimit} and \eqref{5:limit}, we obtain the following summation formula for a very-well-poised ${}_4H_4(-1)$, namely 
\begin{equation}\label{5:4H4m1sum}
\Hhyp44{1\!+\!\frac12 a,b,c,d}{\frac12 a,1\!+\!a\!-\!b,1\!+\!a\!-\!c,1\!+\!a\!-\!d}{-1}=
\Gamma\left[\begin{array}{c} 1-b, 1-c,1-d,1+a-b, 1+a-c, 1+a-d\\ 1-a, 1+a, 1+a-b-c, 1+a-b-d, 1+a-c-d
\end{array}\right].
\end{equation}
If we combine this with Theorem \ref{4:t4}, we obtain the following result.

\begin{thm}\label{5:tm4b}
Let $\Re(3a+2b_1+2b_2+2b_3)>-1$. Then
\begin{eqnarray}
&& \hspace{-0.6cm}\int_{-\infty}^\infty \frac{(2\cos(\pi x)+2\cos(3\pi x))\,\dd x}
{\Gamma(\tfrac12a+x)\Gamma(-\tfrac12 a-x)\prod_{j=1}^3 \Gamma(1+a+b_j+x)\Gamma(1+b_j-x)}\nonumber\\
&&=\frac{1}{\Gamma(\tfrac12a)\Gamma(-\tfrac12a)   
\Gamma(1-a)\Gamma(1+a)\Gamma(1+a+b_1+b_2)\Gamma(1+a+b_1+b_3)\Gamma(1+a+b_2+b_3)}.\label{5:m4b}
\end{eqnarray}
\end{thm}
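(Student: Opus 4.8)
The plan is to prove this by feeding the very-well-poised ${}_4H_4(-1)$ summation \eqref{5:4H4m1sum} into the second identity of Theorem~\ref{4:t4} in the case $m=4$. For $m=4$ the trigonometric weight in that identity is $\sin(4\pi x)/\sin(\pi x)=2\cos(\pi x)+2\cos(3\pi x)$, so Theorem~\ref{4:t4}, used under its stated hypothesis $\sum_{j}\Re(a_j+b_j+1)>1$ (which, unlike the running assumption of Section~\ref{sec4}, does \emph{not} require each $f_j\in L^1$), gives for temporary parameters $a_1,\dots,a_4,b_1,\dots,b_4$
\[
\int_{-\infty}^\infty\frac{(2\cos(\pi x)+2\cos(3\pi x))\,\dd x}{\prod_{j=1}^4\Gamma(a_j+1+x)\Gamma(b_j+1-x)}
=C_0\,\Hhyp44{-b_1,-b_2,-b_3,-b_4}{a_1+1,a_2+1,a_3+1,a_4+1}{-1},
\]
with $C_0=\bigl(\prod_{j=1}^4\Gamma(a_j+1)\Gamma(b_j+1)\bigr)^{-1}$.

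The crucial step is choosing the specialization that makes the ${}_4H_4(-1)$ very-well-poised. Writing $a,b_1,b_2,b_3$ now for the parameters of the theorem, I would set $a_1=\tfrac12a-1$, $b_1=-\tfrac12a-1$, and $a_{j+1}=a+b_j$, $b_{j+1}=b_j$ for $j=1,2,3$. With this choice the left-hand integrand above becomes exactly the one in \eqref{5:m4b}, the constant collapses to $C_0=\bigl(\Gamma(\tfrac12a)\Gamma(-\tfrac12a)\prod_{j=1}^3\Gamma(1+a+b_j)\Gamma(1+b_j)\bigr)^{-1}$, and the series becomes
\[
\Hhyp44{1+\tfrac12a,\,-b_1,\,-b_2,\,-b_3}{\tfrac12a,\,1+a+b_1,\,1+a+b_2,\,1+a+b_3}{-1},
\]
which is precisely the left-hand side of \eqref{5:4H4m1sum} after the replacements $b\mapsto-b_1$, $c\mapsto-b_2$, $d\mapsto-b_3$.

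Substituting the closed form \eqref{5:4H4m1sum} and multiplying through by $C_0$ then completes the computation: the six numerator factors $\Gamma(1+b_1)\Gamma(1+b_2)\Gamma(1+b_3)\Gamma(1+a+b_1)\Gamma(1+a+b_2)\Gamma(1+a+b_3)$ in \eqref{5:4H4m1sum} cancel against the matching factors inside $C_0$, and what remains is
\[
\frac{1}{\Gamma(\tfrac12a)\Gamma(-\tfrac12a)\Gamma(1-a)\Gamma(1+a)\Gamma(1+a+b_1+b_2)\Gamma(1+a+b_1+b_3)\Gamma(1+a+b_2+b_3)},
\]
which is the right-hand side of \eqref{5:m4b}. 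It then remains to match up hypotheses: with the substitution above one computes $\sum_{j=1}^4\Re(a_j+b_j+1)=-1+\sum_{j=1}^3\Re(a+2b_j+1)=2+\Re(3a+2b_1+2b_2+2b_3)$, so the hypothesis of Theorem~\ref{4:t4} becomes exactly $\Re(3a+2b_1+2b_2+2b_3)>-1$; this is also the range ($\Re\sigma>1$ in the notation of Theorem~\ref{2:t1}) in which \eqref{5:4H4m1sum} was derived, and it is precisely the condition making the integral in \eqref{5:m4b} convergent, since its integrand decays like $|x|^{-2-\Re(3a+2b_1+2b_2+2b_3)}$.

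I do not anticipate a genuine obstacle; the argument is essentially bookkeeping once the substitution has been found. The two points needing care are: invoking Theorem~\ref{4:t4} in the form valid under the weaker hypothesis (note that $\Re(a_1+b_1)=-2<0$ after the substitution, so the single factor $f_1$ is not in $L^1(\R)$, even though the full product $f$ is integrable); and recording the generic exclusions --- no $\Gamma$-argument on the right-hand side of \eqref{5:m4b} a nonpositive integer, and the ${}_4H_4$ series well-defined and conditionally convergent at $-1$ --- which are then removed at the end by continuity in $a,b_1,b_2,b_3$.
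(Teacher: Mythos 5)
Your proposal is correct and follows exactly the route the paper intends: the paper's entire proof is the remark ``if we combine this with Theorem~\ref{4:t4}, we obtain the following result,'' and your substitution $a_1=\tfrac12a-1$, $b_1=-\tfrac12a-1$, $a_{j+1}=a+b_j$, $b_{j+1}=b_j$ is precisely the combination of the $m=4$ case of Theorem~\ref{4:t4} with the very-well-poised ${}_4H_4(-1)$ summation \eqref{5:4H4m1sum} that is meant. Your bookkeeping of the $\Gamma$-factor cancellations and of the hypothesis $\sum_{j=1}^4\Re(a_j+b_j+1)=2+\Re(3a+2b_1+2b_2+2b_3)>1$ is accurate, and your observation that $f_1\notin L^1(\R)$ individually (handled by the analytic-continuation clause in Lemma~\ref{4:l1}) is a worthwhile detail the paper leaves implicit.
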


In \eqref{5:m4b} we substitute $x=y-\tfrac12 a$ ,
$b_j=c_j-\tfrac12 a$. Then we obtain
\begin{multline}\label{5:m4c}
\int_{-\infty}^\infty \frac{(2\cos(\pi y)\cos(\tfrac12\pi a)+2\cos(3\pi y)\cos(\tfrac32\pi a))\, \dd y}{\Gamma(y)\Gamma(-y)\prod_{j=1}^3 (1+c_j+y)\Gamma(1+c_j-y)}\\
=\frac{1}{\Gamma(\tfrac12a)\Gamma(-\tfrac12a)   
\Gamma(1-a)\Gamma(1+a)\Gamma(1+c_1+c_2)\Gamma(1+c_1+c_3)\Gamma(1+c_2+c_3)}
\end{multline}
valid for $\Re(c_1+c_2+c_3)>-\frac12$.

\subsection{The case \texorpdfstring{$m=5$}{m=5}}

It follows from Theorem \ref{4:t4} that
\[ \int_{-\infty}^\infty \frac{(2\cos(\pi x)+2\cos(3\pi x))\,\dd x}{\prod_{j=1}^5 \Gamma(1+a_j+x)\Gamma(1+b_j-x)}
= C_0\,  
\Hhyp{5}{5}{-b_1,\dots,-b_5}{a_1+1,\dots,a_5+1}{1},
\]
where
\[ C_0=\frac{1}{\prod_{j=1}^5 \Gamma(a_j+1)\Gamma(b_j+1)} .\]
According to \cite[(6.1.2.5)]{Slater66}  one has the following evaluation of a very-well-poised ${}_5H_5(1)$, namely
\begin{eqnarray}
&& 
\Hhyp{5}{5}{1+\tfrac12a,b,c,d,e}{\tfrac12a,1+a-b,1+a-c,1+a-d,1+a-e}{1}
\nonumber \\
&&= \Gamma\left[\begin{array}{c} 1-b,1-c,1-d,1-e,1+a-b,1+a-c,1+a-d,1+a-e,1+2a-b-c-d-e\\\nonumber
1+a, 1-a, 1+a-b-c,1+a-b-d,1+a-b-e,1+a-c-d,1+a-c-e,1+a-d-e\end{array}
\right]\label{5:5H5}
\end{eqnarray}
for $\Re(1+2a-b-c-d-e)>0$. 
If we combine these results we obtain the following theorem.

\begin{thm}\label{5:tm5a}
Let $\Re(1+2a+b_1+b_2+b_3+b_4)>0$. Then
\begin{eqnarray}
&&\hspace{-3cm}\int_{-\infty}^\infty \frac{(2\cos(\pi x)+2\cos(3\pi x))\,\dd x}{\Gamma(\frac12a+x)\Gamma(-\frac12 a-x)\prod_{j=1}^4 \Gamma(1+a+b_j+x)\Gamma(1+b_j-x)}\nonumber\\
&&\hspace{-1cm}=\frac{\Gamma(1+2a+b_1+b_2+b_3+b_4)}{\Gamma(\frac12a)\Gamma(-\frac12a)\Gamma(1-a)\Gamma(1+a)\prod_{1\le i<j\le 4} \Gamma(1+a+b_i+b_j)} .\label{5:m5a}
\end{eqnarray}
\end{thm}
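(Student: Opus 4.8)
The plan is to read the theorem off from the integral identity for ${}_5H_5(1)$ displayed just above it — this is Theorem~\ref{4:t4} with $m=5$, because $\sin(4\pi x)/\sin(\pi x)=2\cos(\pi x)+2\cos(3\pi x)$ — after specializing the still-free parameters $a_1,\dots,a_5$, $b_1,\dots,b_5$ so that the resulting ${}_5H_5(1)$ becomes very-well-poised, and then applying Slater's evaluation \cite[(6.1.2.5)]{Slater66} of such a series. Apart from this choice of parameters the argument is pure bookkeeping: there is no analytic content beyond Theorem~\ref{4:t4} and \cite[(6.1.2.5)]{Slater66}.

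Concretely, I would set $b_5:=-1-\tfrac12 a$ and $a_5:=\tfrac12 a-1$, take $b_1,\dots,b_4$ to be the $b_j$ of the statement, and put $a_j:=a+b_j$ for $j=1,\dots,4$. Then in $\Hhyp{5}{5}{-b_1,\dots,-b_5}{a_1+1,\dots,a_5+1}{1}$ the numerator entry $-b_5=1+\tfrac12 a$ is paired with the denominator entry $a_5+1=\tfrac12 a$, and each remaining numerator entry $-b_j$ ($1\le j\le 4$) is paired with $a_j+1=1+a+b_j=1+a-(-b_j)$; this is exactly the very-well-poised pattern of \cite[(6.1.2.5)]{Slater66} with its $a$ equal to ours and its $b,c,d,e$ equal to $-b_1,-b_2,-b_3,-b_4$. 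Three routine checks complete the set-up: (i)~the integrand denominator $\prod_{j=1}^5\Gamma(1+a_j+x)\Gamma(1+b_j-x)$ becomes $\Gamma(\tfrac12 a+x)\Gamma(-\tfrac12 a-x)\prod_{j=1}^4\Gamma(1+a+b_j+x)\Gamma(1+b_j-x)$, the denominator appearing in \eqref{5:m5a}; (ii)~the prefactor $C_0=1/\prod_{j=1}^5\Gamma(a_j+1)\Gamma(b_j+1)$ becomes $1/\bigl(\Gamma(\tfrac12 a)\Gamma(-\tfrac12 a)\prod_{j=1}^4\Gamma(1+a+b_j)\Gamma(1+b_j)\bigr)$; and (iii)~since $a_5+b_5+1=-1$ and $a_j+b_j+1=1+a+2b_j$ for $1\le j\le 4$, the hypothesis $\sum_{j=1}^5\Re(a_j+b_j+1)>1$ needed in Theorem~\ref{4:t4} is equivalent to $\Re(1+2a+b_1+b_2+b_3+b_4)>0$ — which is the hypothesis of the theorem, and is also exactly the condition under which \cite[(6.1.2.5)]{Slater66} is valid and under which the ${}_5H_5$ converges absolutely at $z=1$.

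It then remains to substitute into \cite[(6.1.2.5)]{Slater66}. Its right-hand side, with $b\mapsto -b_1$, $c\mapsto -b_2$, $d\mapsto -b_3$ and $e\mapsto -b_4$ (and its $a$ equal to ours), is the gamma quotient with numerator $\prod_{j=1}^4\Gamma(1+b_j)\Gamma(1+a+b_j)\cdot\Gamma(1+2a+b_1+b_2+b_3+b_4)$ and denominator $\Gamma(1+a)\Gamma(1-a)\prod_{1\le i<j\le 4}\Gamma(1+a+b_i+b_j)$. Multiplying by the $C_0$ from step~(ii), the factor $\prod_{j=1}^4\Gamma(1+b_j)\Gamma(1+a+b_j)$ cancels and what remains is precisely the right-hand side of \eqref{5:m5a}. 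I do not expect a real obstacle here; the one point requiring care is the three-way matching of the single convergence/validity inequality carried out in step~(iii), which is a short calculation.
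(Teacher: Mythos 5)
Your proposal is correct and follows exactly the route the paper takes: apply Theorem~\ref{4:t4} with $m=5$ (so that $\sin(4\pi x)/\sin(\pi x)=2\cos(\pi x)+2\cos(3\pi x)$), specialize $a_5=\tfrac12a-1$, $b_5=-1-\tfrac12a$, $a_j=a+b_j$ so the ${}_5H_5(1)$ becomes very-well-poised, and evaluate it by Slater's \cite[(6.1.2.5)]{Slater66}; your parameter matching, cancellation of the $\Gamma(1+b_j)\Gamma(1+a+b_j)$ factors, and verification that the convergence conditions all reduce to $\Re(1+2a+b_1+b_2+b_3+b_4)>0$ are all accurate.
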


If we substitute $x=y-\frac12 a$, $b_j=c_j-\tfrac12a$ in \eqref{5:m5a}, then we obtain
\begin{multline}\label{eqm5a2}
\int_{-\infty}^\infty \frac{(2\cos(\pi y)\cos(\tfrac12\pi a)+2\cos(3\pi y)\cos(\tfrac32\pi a))\,\dd y}{\Gamma(y)\Gamma(-y)\prod_{j=1}^4 \Gamma(1+c_j+y)\Gamma(1+c_j-y)}\\
=\frac{\Gamma(1+c_1+c_2+c_3+c_4)}{\Gamma(\frac12a)\Gamma(-\frac12a)\Gamma(1-a)\Gamma(1+a)\prod_{1\le i<j\le 4} \Gamma(1+c_i+c_j)}
\end{multline}
valid for $\Re(1+c_1+c_2+c_3+c_4)>0$. 
Taking $a=\frac13$ gives us the following result.

\begin{thm}\label{5:tm5b}
Let $\Re(1+c_1+c_2+c_3+c_4)>0$. Then
\begin{equation}\label{5:m5b}
\int_{-\infty}^\infty \frac{\cos(\pi y)\,\dd y}{\Gamma(y)\Gamma(-y)\prod_{j=1}^4 \Gamma(1+c_j+y)\Gamma(1+c_j-y)}
=-\frac{1}{8\pi^2}\frac{\Gamma(1+c_1+c_2+c_3+c_4)}{\prod_{1\le i<j\le 4} \Gamma(1+c_i+c_j)} .
\end{equation}
\end{thm}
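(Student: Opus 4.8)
\emph{Proof proposal.} The plan is to obtain \eqref{5:m5b} as the single special case $a=\tfrac13$ of equation \eqref{eqm5a2}. Recall that \eqref{eqm5a2} has already been established: it comes from Theorem \ref{5:tm5a} (which rests on the very-well-poised ${}_5H_5(1)$ summation \cite[(6.1.2.5)]{Slater66}) via the substitutions $x=y-\tfrac12a$, $b_j=c_j-\tfrac12a$, and it holds precisely when $\Re(1+c_1+c_2+c_3+c_4)>0$, which is exactly the hypothesis of the present theorem. So no new convergence or validity conditions arise. I would first observe that at $a=\tfrac13$ each of $\Gamma(\tfrac12a)=\Gamma(\tfrac16)$, $\Gamma(-\tfrac12a)=\Gamma(-\tfrac16)$, $\Gamma(1-a)=\Gamma(\tfrac23)$, $\Gamma(1+a)=\Gamma(\tfrac43)$ is finite and nonzero, so the specialization is legitimate.

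Second, I would simplify the trigonometric factor in the integrand of \eqref{eqm5a2}. With $a=\tfrac13$ one has $\cos(\tfrac12\pi a)=\cos\tfrac\pi6=\tfrac{\sqrt3}{2}$ and $\cos(\tfrac32\pi a)=\cos\tfrac\pi2=0$, so $2\cos(\pi y)\cos(\tfrac12\pi a)+2\cos(3\pi y)\cos(\tfrac32\pi a)=\sqrt3\,\cos(\pi y)$. This is the whole point of taking $a=\tfrac13$: it annihilates the $\cos(3\pi y)$ term and leaves an integral whose numerator is a single cosine, matching the left-hand side of \eqref{5:m5b} up to the scalar $\sqrt3$.

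Third, I would evaluate the constant on the right of \eqref{eqm5a2} at $a=\tfrac13$. Using $\Gamma(z)\Gamma(-z)=-\pi/\bigl(z\sin(\pi z)\bigr)$ with $z=\tfrac16$ gives $\Gamma(\tfrac16)\Gamma(-\tfrac16)=-12\pi$; using $\Gamma(\tfrac43)=\tfrac13\Gamma(\tfrac13)$ together with the reflection formula $\Gamma(\tfrac13)\Gamma(\tfrac23)=\pi/\sin\tfrac\pi3=2\pi/\sqrt3$ gives $\Gamma(\tfrac23)\Gamma(\tfrac43)=2\pi/(3\sqrt3)$. Hence $\Gamma(\tfrac12a)\Gamma(-\tfrac12a)\Gamma(1-a)\Gamma(1+a)=-8\pi^2/\sqrt3$ at $a=\tfrac13$. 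Substituting the simplified numerator and this value into \eqref{eqm5a2} and then dividing both sides by $\sqrt3$ reproduces \eqref{5:m5b} verbatim.

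There is essentially no obstacle: the result is a clean specialization, and the only thing requiring care is the bookkeeping of the $\Gamma$-function special values — in particular that the stray factors of $\sqrt3$ coming from $\cos\tfrac\pi6$ on the left and from $\Gamma(\tfrac16)\Gamma(-\tfrac16)\Gamma(\tfrac23)\Gamma(\tfrac43)$ on the right cancel exactly, which they do. If one wants a self-contained statement, one can additionally record that the integrand in \eqref{5:m5b} is entire in $y$ and decays like $|y|^{-3-2\Re(c_1+c_2+c_3+c_4)}$ as $|y|\to\infty$, so the integral converges absolutely under the stated hypothesis.
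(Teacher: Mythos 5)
Your proposal is correct and is exactly the paper's argument: the paper obtains Theorem \ref{5:tm5b} by setting $a=\tfrac13$ in \eqref{eqm5a2}, and your computation of the special values ($\cos\tfrac{\pi}{6}=\tfrac{\sqrt3}{2}$, $\cos\tfrac{\pi}{2}=0$, $\Gamma(\tfrac16)\Gamma(-\tfrac16)\Gamma(\tfrac23)\Gamma(\tfrac43)=-8\pi^2/\sqrt3$) and the cancellation of the $\sqrt3$ factors is accurate. The only difference is that you spell out the bookkeeping the paper leaves implicit, which is a useful addition rather than a deviation.
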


Note that 
\[ \frac{4\cos(\pi y)}{\Gamma(y)\Gamma(-y)}=\frac{1}{\Gamma(2y)\Gamma(-2y)}\]
so that Theorem \ref{5:tm5b} agrees with
\cite[Thm 4.6]{CohlVolkmer2024}.

\subsection{The case \texorpdfstring{$m=6$}{m=6}}

\begin{thm}\label{5:m6}
Let $\Re(a_1+a_2+a_3+a_4+a_5+a_6)>-\tfrac52$. Then
\begin{equation} \int_{-\infty}^\infty \frac{\dd x}{\prod_{j=1}^6 \Gamma(1+a_j+x)\Gamma(1+a_j-x)}
= 2 S_0(0)+4S_2(0)=2 S_3(0)+4S_1(0) ,
\end{equation}
where $S_k$ is defined as in Theorem \ref{4:t2} for $m=p=6$ and $a_j=b_j$ for $j=1,\dots,6$.
\end{thm}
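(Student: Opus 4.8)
The plan is to apply the Poisson summation formula of Theorem~\ref{3:Poisson} twice, each time choosing the frequency spacing so that the left-hand side collapses to the single term $F(0)=\int_{-\infty}^\infty f(x)\,\dd x$, and then to reorganize the resulting sampling sums using that $f$ is even. Preliminarily I would record three facts. First, since $b_j=a_j$ for every $j$, the function $f$ of \eqref{4:f} satisfies $f(-x)=f(x)$. Second, the hypothesis $\Re(a_1+\dots+a_6)>-\tfrac52$ is exactly the condition $\sum_{j=1}^6\Re(a_j+b_j+1)>1$, so Lemma~\ref{4:l1} gives $F(t)=0$ for all $t\in\R$ with $|t|\ge 6\pi$, and the continuity and $L^1$-decay hypotheses of Theorem~\ref{3:Poisson} are met. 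Third, with $m=p=6$ one has $S_k(0)=\tfrac16\sum_{\ell\in\Z}f(\ell+\tfrac{k}{6})$, so that $\sum_{\ell}f(\ell+\tfrac{r}{3})=6\,S_{2r}(0)$ and $\sum_{\ell}f(\ell+\tfrac{2r+1}{6})=6\,S_{2r+1}(0)$ for $r=0,1,2$, and by Lemma~\ref{l2} (with $a_j=b_j$, at $t=0$) $S_4(0)=S_2(0)$ and $S_5(0)=S_1(0)$.

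For the first identity I would invoke Theorem~\ref{3:Poisson} with $\omega=6\pi$ and $t=0$. On the left every term $F(6\pi n)$ with $n\ne 0$ vanishes because $|6\pi n|\ge 6\pi$, leaving $F(0)=\int_{-\infty}^\infty f(x)\,\dd x$; on the right one gets $\tfrac13\sum_{n\in\Z}f(\tfrac{n}{3})$. Writing $n=3\ell+r$ with $r\in\{0,1,2\}$, this sum equals $2\bigl(S_0(0)+S_2(0)+S_4(0)\bigr)$, and $S_4(0)=S_2(0)$ gives $F(0)=2S_0(0)+4S_2(0)$.

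For the second identity I would apply Theorem~\ref{3:Poisson} to the shifted function $\tilde f(x):=f(x+\tfrac16)$, whose Fourier transform is $\tilde F(t)=\expe^{it/6}F(t)$, again with $\omega=6\pi$ and $t=0$. The left-hand side is $\sum_{n\in\Z}\expe^{i\pi n}F(6\pi n)=F(0)$, since only the $n=0$ term survives, while the right-hand side is $\tfrac13\sum_{n\in\Z}f(\tfrac{n}{3}+\tfrac16)=2\bigl(S_1(0)+S_3(0)+S_5(0)\bigr)$; using $S_5(0)=S_1(0)$ this yields $F(0)=4S_1(0)+2S_3(0)$. Comparing the two evaluations of $F(0)$ gives the stated chain of equalities.

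No step is a genuine obstacle: the frequency sums $\sum_{n}|F(t+n\omega)|$ are finite because $F$ has compact support by Lemma~\ref{4:l1}, and the decay bound on $f$ (hence on $\tilde f$) holds exactly under the stated hypothesis. The one point that must be handled correctly is the choice of spacing — one has to sample the frequency $6\pi$ (taking $\omega=6\pi$, i.e.\ ``$p=3$'' in $\omega=2\pi p$) rather than $3\pi$, for only then does the left-hand side of Poisson reduce to $F(0)$ alone, and this is precisely what produces the coefficients $2$ and $4$ instead of a full sum $\sum_{k=0}^{5}S_k(0)$. As an alternative one could instead start from $F(0)=\sum_{k=0}^{5}S_k(0)=S_0(0)+2S_1(0)+2S_2(0)+S_3(0)$ via Theorem~\ref{4:t2} and Lemma~\ref{l2}, and then prove the single extra relation $S_0(0)+2S_2(0)=2S_1(0)+S_3(0)$ by applying Theorem~\ref{3:Poisson} with $\omega=12\pi$ and $t=6\pi$ (the phase becoming $(-1)^n$), all the sampled frequencies $6\pi+12\pi n$ being odd multiples of $6\pi$ at which $F$ vanishes; either route yields the result.
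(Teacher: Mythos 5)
Your argument is correct, and both of your routes work; your primary one differs mildly in mechanism from the paper's. The paper stays entirely inside Theorem~\ref{4:t2}: with $m=p=6$ it has $F(t)=\sum_{j=0}^{5}S_j(t)$ on $[-6\pi,6\pi]$, evaluates this at $t=0$, and then uses $F(6\pi)=0$ (Lemma~\ref{4:l1}) together with $S_j(6\pi)=(-1)^jS_j(0)$ to get the parity relation $S_0(0)+S_2(0)+S_4(0)=S_1(0)+S_3(0)+S_5(0)$; combining with $S_4(0)=S_2(0)$, $S_5(0)=S_1(0)$ from Lemma~\ref{l2} yields both expressions at once. You instead go back to Theorem~\ref{3:Poisson} with the coarser spacing $\omega=6\pi$, applied once to $f$ and once to the shift $f(\cdot+\tfrac16)$, so that each application already collapses the frequency side to $F(0)$ and produces only the even-indexed (resp.\ odd-indexed) $S_k(0)$'s on the sampling side. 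The two arguments exploit exactly the same fact --- that $\operatorname{supp}F\subset[-6\pi,6\pi]$ gives you one more vanishing relation than the minimal-period Poisson formula --- and your computations ($\tilde F(t)=\expe^{it/6}F(t)$, $\sum_{\ell}f(\ell+\tfrac{r}{3})=6S_{2r}(0)$, the hypothesis matching $\sum_j\Re(a_j+b_j+1)>1$) are all correct; the ``alternative'' you sketch at the end, namely deriving $\sum_k(-1)^kS_k(0)=0$ from the vanishing of $F$ at $6\pi$, is precisely the paper's proof. What your primary route buys is that each identity comes from a single self-contained Poisson application; what the paper's buys is brevity, since Theorem~\ref{4:t2} and Lemma~\ref{l2} are already in place.
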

\begin{proof}
By Theorem \ref{4:t2} we have
\[ F(t):=\int_{-\infty}^\infty \frac{\expe^{-ixt}\, \dd x}{\prod_{j=1}^6 \Gamma(1+a_j+x)\Gamma(1+a_j-x)}
= \sum_{j=0}^5 S_j(t) \]
for $t\in[-6\pi,6\pi]$. Since $F(6\pi)=0$ we get 
\[ S_0(0)+S_2(0)+S_4(0)=S_1(0)+S_3(0)+S_5(0) .\]
Moreover, by Lemma \ref{l2}, $S_2(0)=S_4(0)$ and $S_1(0)=S_5(0)$.
This completes the proof.
\end{proof}

Let $\Re(a_1+a_2+a_3+a_4)>-1$ and set $a_5:=-1$, $a_6:=-\tfrac12$.
By the special choice of $a_5$, $a_6$, we have $S_0(0)=S_3(0)=0$.
Since
\[ \frac{1}{\Gamma(2x)\Gamma(-2x)}=\frac{4\pi}{\Gamma(x)\Gamma(-x)\Gamma(\tfrac12+x)\Gamma(\tfrac12-x)} ,\]
Theorem \ref{5:m6} implies
\[ \int_{-\infty}^\infty \frac{\dd x}{\Gamma(2x)\Gamma(-2x)\prod_{j=1}^4 \Gamma(a_j+1+x)\Gamma(a_j+1-x)}=16\pi S_2(0).\]
By Theorem \ref{4:t2},
\[ S_2(0)= \tfrac16 C_2\, 
\Hhyp{6}{6}{\tfrac13-a_1,\tfrac13-a_2,\tfrac13-a_3,\tfrac13-a_4, \tfrac43,\tfrac56}{a_1+\tfrac43, a_2+\tfrac43,a_3+\tfrac43, a_4 +\tfrac43, \tfrac13, \tfrac56}{1}
.\]
This ${}_6H_6$-series reduces to a ${}_5H_5$-series. We evaluate the ${}_5H_5$-series by \eqref{5:5H5} and obtain again \cite[Thm 4.6]{CohlVolkmer2024}.

\section{A Fourier transform and \texorpdfstring{$q$}{q}-extensions}\label{sec6}

\subsection{A generalization of Ramanujan's Fourier transform}

Ramanujan \cite{Ramanujan1920} used the Fourier transform
\begin{equation}\label{6:int0}
\int_{-\infty}^\infty \frac{\expe^{-ixt}\,\dd x}{\Gamma(\alpha+x)\Gamma(\beta-x)}
=\begin{cases} \displaystyle{\frac{(2\cos(\frac12t))^{\alpha+\beta-2}}{\Gamma(\alpha+\beta-1)}\expe^{-\tfrac12 it(\beta-\alpha)}} & \text{if $t\in\R$, $|t|\le \pi$}, \\0 & \text{if $t\in\R$, $|t|>\pi$,}\end{cases}
\end{equation}
where $\alpha,\beta$ are complex parameters such that $\Re(\alpha+\beta)>2$ (one can also allow $\Re(\alpha+\beta)>1$ but then the integrand might not be 
in $L^1(\R)$ anymore).
Our goal is to derive an analogue of this Fourier transform in the $q$-world.

\begin{lem}\label{6:Fl1}
For $q\in\C$, $0<|q|<1$, $c\in\C$, $c\ne 0$, define the entire function
\[ g(x):=(cq^{-x};q)_\infty q^{\frac12x(x+1)} c^{-x} ,\]
where the powers denote their principal values.
Then $g(x)$ is bounded for $x\ge 0$, and 
\begin{equation}\label{6:limit}
\lim_{n\to\infty}(-1)^n g(s+n)=g(s)(c^{-1}q^{1+s};q)_\infty \quad\text{for $s\in\C$}.
\end{equation}
\end{lem}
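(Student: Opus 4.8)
The plan is to establish the two claims — boundedness of $g(x)$ on $[0,\infty)$ and the limit formula \eqref{6:limit} — by the same computation, namely by writing $g(s+n)$ as an explicit finite product times $g(s)$ and controlling that product as $n\to\infty$. First I would use the basic shifted factorial recursion. From the definition of $(cq^{-x};q)_\infty$ one has, for an integer shift $n\in\N_0$,
\[
(cq^{-(s+n)};q)_\infty = (cq^{-s-n};q)_\infty = \prod_{k=1}^{n}\bigl(1-cq^{-s-k}\bigr)\cdot (cq^{-s};q)_\infty
= (cq^{-s};q)_n^{\,(q\mapsto q^{-1})}\,(cq^{-s};q)_\infty ,
\]
but it is cleaner to track the ratio $g(s+n)/g(s)$ directly: the factor $q^{\frac12 x(x+1)}$ contributes $q^{\frac12[(s+n)(s+n+1)-s(s+1)]} = q^{\frac12 n(n+1)+ns}$, the factor $c^{-x}$ contributes $c^{-n}$, and the factor $(cq^{-x};q)_\infty$ contributes $\prod_{k=1}^{n}(1-cq^{-s-k})$ (the new factors picked up going from exponent $-s$ to exponent $-s-n$). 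Multiplying these and inserting $(-1)^n$, I would show
\[
(-1)^n\frac{g(s+n)}{g(s)} = (-1)^n c^{-n} q^{\frac12 n(n+1)+ns}\prod_{k=1}^{n}\bigl(1-cq^{-s-k}\bigr)
= \prod_{k=1}^{n}\Bigl(-c^{-1}q^{s+k}\bigl(1-cq^{-s-k}\bigr)\Bigr)
= \prod_{k=1}^{n}\bigl(1 - c^{-1}q^{s+k}\bigr),
\]
where in the last step I distributed $q^{\frac12 n(n+1)} = \prod_{k=1}^n q^{k}$ and $q^{ns}=\prod_{k=1}^n q^{s}$ across the product and simplified each factor $-c^{-1}q^{s+k}+1$. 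Letting $n\to\infty$, the right side is the partial product for $(c^{-1}q^{1+s};q)_\infty$ (the factors are $1-c^{-1}q^{s+1},1-c^{-1}q^{s+2},\dots$), which converges since $|q|<1$; this gives \eqref{6:limit}.

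For the boundedness claim on $x\ge 0$, I would argue that it suffices to bound $g$ on the integers together with a uniform bound on each "unit interval," or more simply, bound $|g(s+n)|$ for $s$ in a fixed compact set (say $s\in[0,1)$) and $n\in\N_0$, then note $g$ is continuous. From the identity above, $|g(s+n)| = |g(s)|\cdot\bigl|\prod_{k=1}^{n}(1-c^{-1}q^{s+k})\bigr|$, and the infinite product $\prod_{k\ge 1}(1+|c|^{-1}|q|^{s+k})$ converges and dominates each partial product uniformly for $s\ge 0$; since $|g(s)|$ is continuous hence bounded on $[0,1)$ (note $g$ is entire, so no pole issue), we get a uniform bound for all $x\ge 0$. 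One small point to handle carefully is the phrase "principal values" for the powers $q^{\frac12 x(x+1)}$ and $c^{-x}$: for non-integer $x$ these are $e^{\frac12 x(x+1)\log q}$ and $e^{-x\log c}$ with principal $\log$, and the telescoping computation above only used integer shifts $x\mapsto x+n$, under which principal powers do multiply correctly as long as one is consistent — I would remark that the identity $g(s+n)/g(s)=\prod_{k=1}^n(1-c^{-1}q^{s+k})$ is first verified as an identity of entire functions (both sides entire in $s$, agreeing by the recursion), so branch subtleties do not actually enter.

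The main obstacle — really the only thing requiring care — is the bookkeeping in the telescoping step: correctly identifying which factors $(cq^{-x};q)_\infty$ gains when $x$ decreases by shifting the exponent, and checking that the powers of $q$ and $c$ combine to turn $\prod(1-cq^{-s-k})$ into $\prod(1-c^{-1}q^{s+k})$ with the sign $(-1)^n$ absorbed exactly. Everything after that is routine: convergence of $q$-Pochhammer infinite products for $|q|<1$, and continuity/boundedness on a compact base interval. I do not anticipate needing anything beyond the definitions \eqref{factorials} of the basic shifted factorials already recorded in the paper.
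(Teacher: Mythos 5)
Your proposal is correct and follows essentially the same route as the paper: you derive the identity $(-1)^n g(s+n)=g(s)\,(c^{-1}q^{1+s};q)_n$ (you by telescoping the infinite product directly, the paper by invoking the formula \eqref{factorials} for $(a;q)_{-n}$, which encodes the same computation), then let $n\to\infty$ for the limit and combine boundedness of $g$ on $[0,1]$ with the uniformly bounded partial products for the boundedness claim. No gaps.
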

\begin{proof}
Let $s\in\C$ and $n\in\N_0$. Then 
\begin{eqnarray*}
 g(s+n)&=&(cq^{-s-n};q)_\infty q^{\frac12(s+n)(s+n+1)} c^{-s-n}\\
 &=& \frac{(cq^{-s};q)_\infty}{(cq^{-s};q)_{-n}}  q^{\frac12(s+n)(s+n+1)}  c^{-s-n}\\
 &=& (cq^{-s};q)_\infty (c^{-1}q^{1+s};q)_n q^{-\frac12n(n+1)}  q^{\frac12(s+n)(s+n+1)}(-cq^{-s})^n  c^{-s-n}\\
 &=&(-1)^n g(s) (c^{-1}q^{1+s};q)_n .
\end{eqnarray*}
This establishes \eqref{6:limit}. The function $g(x)$ is bounded for $x\ge 0$ because
$g(s)$ is bounded on $[0,1]$ and $ (c^{-1}q^{1+s};q)_n$ is bounded for $s\in[0,1]$ and $n\in\N_0$.
\end{proof}

Let $q\in\C$, $0<|q|<1$, and $a,b,w\in\C$, $a,b,w\ne 0$. We define the entire function
\[ f(x;w;q):=(bq^x;q)_\infty (a^{-1}q^{1-x};q)_\infty q^{\frac12x(x-1)} w^x.\]

\begin{lem}\label{6:Fl2}
There are constants $K,L$ (independent of $x$) such that
\[ |f(x;w;q)|\le K \left|\frac{w}{a}\right|^x \quad\text{for $x\ge 0$},\]
and
\[ |f(x;w;q)|\le L \left|\frac{w}{b}\right|^x \quad\text{for $x\le 0$}.\]
\end{lem}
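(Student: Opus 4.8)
The plan is to estimate the two infinite $q$-Pochhammer factors in $f(x;w;q)$ separately and then collect the powers of $q$ and $w$. Write
\[
f(x;w;q)=(bq^x;q)_\infty\,(a^{-1}q^{1-x};q)_\infty\,q^{\frac12x(x-1)}\,w^x,
\]
and deal first with the regime $x\ge 0$. The factor $(bq^x;q)_\infty$ causes no trouble there: since $0<|q|<1$, the quantity $\sup_{x\ge 0}|(bq^x;q)_\infty|$ is finite — indeed $(bq^x;q)_\infty\to 1$ as $x\to+\infty$ and the map is continuous on $[0,\infty)$ — so it is bounded by an absolute constant. The delicate factor is $(a^{-1}q^{1-x};q)_\infty$, whose argument $a^{-1}q^{1-x}$ blows up as $x\to+\infty$. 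Here I would invoke Lemma \ref{6:Fl1}: setting $c:=a^{-1}q$ (so $c\ne 0$), the entire function $g(x)=(cq^{-x};q)_\infty q^{\frac12x(x+1)}c^{-x}$ satisfies, by the identity derived in that lemma, $g(x)=(a^{-1}q^{1-x};q)_\infty\,q^{\frac12x(x+1)}\,(a^{-1}q)^{-x}$ up to bookkeeping of the shift, and $g$ is \emph{bounded} for $x\ge 0$. Solving for $(a^{-1}q^{1-x};q)_\infty$ expresses it as (bounded function) times $q^{-\frac12x(x+1)}a^{x}q^{-x}$, and multiplying by the remaining $q^{\frac12x(x-1)}w^x$ from $f$ makes the $q^{\pm\frac12x^2}$ terms cancel, leaving $q^{-x}\cdot(w/a)^{-x}\cdot$(hmm, sign) — more carefully, after collecting one is left with a pure power $(w/a)^x$ times a bounded factor, which is exactly the claimed bound $|f(x;w;q)|\le K|w/a|^x$.

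For $x\le 0$ the roles reverse by the symmetry of the two Pochhammer factors. Now $(a^{-1}q^{1-x};q)_\infty$ has argument $a^{-1}q^{1-x}\to 0$ as $x\to-\infty$, hence is bounded by an absolute constant on $(-\infty,0]$ by the same continuity-plus-limit argument. The problematic factor is $(bq^x;q)_\infty$, whose argument $bq^x$ diverges as $x\to-\infty$; substituting $x\mapsto -x$ reduces this to the situation handled by Lemma \ref{6:Fl1} with $c:=b$, giving a bound of the form (bounded)$\times q^{-\frac12x(x-1)}b^{-x}$ (principal values throughout), and multiplying by $q^{\frac12x(x-1)}w^x$ again cancels the Gaussian $q$-power and leaves $(w/b)^x$ times a bounded factor, i.e.\ $|f(x;w;q)|\le L|w/b|^x$ for $x\le 0$.

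The main obstacle is purely bookkeeping rather than conceptual: matching the half-integer exponents $\tfrac12x(x\pm1)$ and the shifts by $1$ in the argument $q^{1-x}$ against the normalization in Lemma \ref{6:Fl1}, so that the $q^{\pm\frac12x^2}$ and $q^{\pm\frac12x}$ terms cancel exactly and one is left with a genuine exponential $|w/a|^x$ (resp.\ $|w/b|^x$) and nothing else. One must also be a little careful that all the powers are read off as principal values consistently — the statement of Lemma \ref{6:Fl1} already flags this — but since we only need an upper bound on $|f|$, and $|q^z|=|q|^{\Re z}$ with $|q|<1$ behaves monotonically, the estimates go through without any case analysis on $\arg q$ or $\arg w$. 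I would therefore present the $x\ge 0$ computation in full detail and then remark that $x\le 0$ follows by interchanging $(a,b)\leftrightarrow$ the other factor and replacing $x$ by $-x$.
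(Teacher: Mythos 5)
Your plan is correct and is exactly the argument the paper intends: the paper's entire proof of this lemma is the single line ``This follows from Lemma \ref{6:Fl1}'', and your expansion --- bound the harmless Pochhammer factor by continuity, rewrite the divergent one via $g$ with $c=a^{-1}q$ for $x\ge0$ (resp.\ $c=b$ after $x\mapsto -x$), and cancel the Gaussian $q$-powers --- is precisely the intended bookkeeping. The cancellation does close exactly: $(a^{-1}q^{1-x};q)_\infty=g(x)\,q^{-\frac12x(x+1)}a^{-x}q^{x}$, so the net $q$-exponent is $-\tfrac12x(x+1)+x+\tfrac12x(x-1)=0$ and $f(x;w;q)=(bq^x;q)_\infty\,g(x)\,(w/a)^x$; your intermediate ``$a^{x}q^{-x}$'' has both signs flipped, but the final conclusion you state is the right one.
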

\begin{proof}
This follows from Lemma \ref{6:Fl1}.
\end{proof}

Lemma \ref{6:Fl2} shows that $f\in L^1(\R)$ when
\begin{equation}\label{6:ass1}
 |b|<|w|< |a| .
 \end{equation}

\begin{thm}\label{6:t1}
Under assumption \eqref{6:ass1} we have 
\begin{equation}\label{6:int1}
 \int_{-\infty}^\infty f(x;w;q)\,\dd x =\frac{(\frac{b}{a};q)_\infty}{(-\frac{w}{a},-\frac{b}{w};q)_\infty}
 \frac{\sqrt{2\pi w}\exp\left(\frac{(\log  w)^2}{2\log  q^{-1}}\right)} {q^{1/8}\sqrt{\log  q^{-1}}}.
 \end{equation} 
\end{thm}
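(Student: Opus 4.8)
The plan is to expand both infinite $q$-shifted factorials occurring in $f(x;w;q)$ by Euler's $q$-exponential series and then integrate the resulting double series term by term in $x$. Applying $(z;q)_\infty=\sum_{n\ge0}\frac{(-1)^{n}q^{n(n-1)/2}}{(q;q)_n}z^{n}$ to $(bq^{x};q)_\infty$ and to $(a^{-1}q^{1-x};q)_\infty$, one obtains for each fixed $x\in\R$
\[
f(x;w;q)=\sum_{j,k\ge0}\frac{(-1)^{j+k}\,q^{j(j-1)/2}\,q^{k(k+1)/2}\,b^{j}a^{-k}}{(q;q)_j\,(q;q)_k}\;q^{x(x-1)/2}\bigl(q^{\,j-k}w\bigr)^{x}.
\]

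Next I would record the elementary Gaussian evaluation
\[
\int_{-\infty}^{\infty} q^{x(x-1)/2}\,v^{x}\,\dd x
=\frac{\sqrt{2\pi v}}{q^{1/8}\sqrt{\log q^{-1}}}\exp\!\left(\frac{(\log v)^{2}}{2\log q^{-1}}\right),\qquad v\neq0,
\]
which for $q\in(0,1)$ follows by writing $q=\expe^{-u}$, completing the square, and using $\int_{-\infty}^{\infty}\expe^{-Ax^{2}+Bx}\,\dd x=\sqrt{\pi/A}\,\expe^{B^{2}/(4A)}$, and which for complex $q$ with $0<|q|<1$ persists upon reading $\log q^{-1}$ and all powers as principal values (since $\Re(\log q^{-1})=-\log|q|>0$, the quadratic term keeps positive real part). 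Substituting $v=q^{\,j-k}w$ and simplifying, the arithmetical heart of the argument is that the exponent of $q$ produced by the $(j,k)$-term, namely $\tfrac12 j(j-1)+\tfrac12 k(k+1)+\tfrac12(j-k)-\tfrac12(j-k)^{2}$, collapses exactly to $jk$; likewise the $w$-powers combine to $(b/w)^{j}(w/a)^{k}$. This turns the term-by-term integral into
\[
\int_{-\infty}^{\infty} f(x;w;q)\,\dd x
=\frac{\sqrt{2\pi w}\;\exp\!\left(\frac{(\log w)^{2}}{2\log q^{-1}}\right)}{q^{1/8}\sqrt{\log q^{-1}}}
\;\sum_{j,k\ge0}\frac{(-b/w)^{j}(-w/a)^{k}\,q^{jk}}{(q;q)_j\,(q;q)_k}.
\]

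The double series is then summed in closed form in two moves. Summing over $j$ first, $\sum_{j\ge0}\frac{(-bq^{k}/w)^{j}}{(q;q)_j}=\frac{1}{(-bq^{k}/w;q)_\infty}=\frac{(-b/w;q)_k}{(-b/w;q)_\infty}$; the remaining sum over $k$ is a $\,{}_1\phi_0$, and the $q$-binomial theorem ${}_1\phi_0(c;-;q,z)=\frac{(cz;q)_\infty}{(z;q)_\infty}$ with $c=-b/w$, $z=-w/a$ (so $cz=b/a$) evaluates it to $\frac{(b/a;q)_\infty}{(-w/a;q)_\infty}$. Hence the double sum equals $\dfrac{(b/a;q)_\infty}{(-w/a;q)_\infty(-b/w;q)_\infty}$, which is exactly the $q$-shifted-factorial factor in \eqref{6:int1}; every series here converges because $|b/w|<1$ and $|w/a|<1$ by \eqref{6:ass1}.

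The step requiring genuine care, and the one I expect to be the main obstacle, is the interchange of the double summation with the integration. I would justify it by Tonelli/Fubini: the double series of $\int_{-\infty}^{\infty}|\,\cdot\,|\,\dd x$ of the individual terms is, after the same Gaussian evaluation and the same collapse of the $q$-exponent to $q^{jk}$, a fixed constant multiple of $\sum_{j,k\ge0}\frac{|q|^{jk}\,(|b|/|w|)^{j}\,(|w|/|a|)^{k}}{|(q;q)_j|\,|(q;q)_k|}$, which is finite precisely under \eqref{6:ass1}; integrability of $f$ itself on $\R$, needed for the left-hand side of \eqref{6:int1} to make sense at all, is already provided by Lemma \ref{6:Fl2}. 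A secondary, purely technical point is the bookkeeping of principal branches for complex $q$ and $w$ throughout the Gaussian integral and the final identity; there one extends from $q\in(0,1)$ and $w>0$ by analytic continuation, both sides being holomorphic on the relevant domains.
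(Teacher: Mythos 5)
Your proposal is correct, and it takes a genuinely different route from the paper. The paper folds the integral onto a period, writing $\int_{\R}f(x;w;q)\,\dd x=\int_0^1\sum_{k\in\Z}f(x+k;w;q)\,\dd x$, evaluates the inner bilateral sum by Ramanujan's ${}_1\psi_1$ summation \eqref{5:1psi1} to pull out the factor $(q,\tfrac{b}{a};q)_\infty/(-\tfrac{w}{a},-\tfrac{b}{w};q)_\infty$ as in \eqref{6:Feq2}, and then identifies the leftover period integral with the Gaussian \eqref{6:eq4} via the $b\to0$, $a\to\infty$ specialization (equivalently, Jacobi's triple product), following Ismail--Rahman. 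You instead expand both infinite products by Euler's $q$-exponential theorem, integrate term by term against the same Gaussian, observe the exponent collapse $\tfrac12 j(j-1)+\tfrac12 k(k+1)+\tfrac12(j-k)-\tfrac12(j-k)^2=jk$ (which checks out), and resum by Euler's second identity plus the $q$-binomial theorem; your Tonelli justification is sound since the majorant series carries $|q|^{jk}(|b|/|w|)^j(|w|/|a|)^k$ with $\inf_n|(q;q)_n|>0$, so \eqref{6:ass1} is exactly what is needed, and integrability of $f$ itself is Lemma \ref{6:Fl2}. What each approach buys: the paper's periodization reuses the ${}_1\psi_1$ machinery already central to the rest of the article and makes the theta-product structure visible; your double-series argument is more elementary and self-contained (only the two Euler identities and the $q$-binomial theorem, all limiting cases of the ${}_1\psi_1$), at the cost of the Fubini bookkeeping and a slightly more careful treatment of branches --- though as you note, one can either carry the explicit logarithm $L=(j-k)\log q+\log w$ through the Gaussian formula $\int_{\R}\expe^{-Ax^2+Bx}\,\dd x=\sqrt{\pi/A}\,\expe^{B^2/(4A)}$ (valid for $\Re A>0$ and arbitrary complex $B$), or reduce to $q\in(0,1)$, $w>0$ and continue analytically.
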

\begin{proof}
Similar to \cite[\S 3]{IsmailRahman1995} we find
\begin{eqnarray}
&&\hspace{-3.3cm}\int_{-\infty}^\infty f(x;w;q)\,\dd x= \int_0^1 \sum_{k=-\infty}^\infty f(x+k;w;q)\, \dd x \nonumber\\
&&\hspace{-0.3cm}= \frac{(q,\frac{b}{a};q)_\infty}{(-\frac{w}{a},-\frac{b}{w}
;q)_\infty} \int_0^1 (-wq^x,w^{-1}q^{1-x};q)_\infty q^{\frac12x(x-1)}w^x\,\dd x .\label{6:Feq2}
\end{eqnarray}
To prove this equation Ramanujan's summation formula \eqref{5:1psi1} is used.
If we apply \eqref{6:Feq2} with $b\to0$ and $a\to\infty$ we obtain
\begin{equation}\label{6:eq3}
 \int_{-\infty}^\infty q^{\frac12x(x-1)} w^x\,\dd x
=(q;q)_\infty \int_0^1 (-wq^x,-w^{-1}q^{1-x};q)_\infty q^{\frac12x(x-1)} w^x\,\dd x .
\end{equation}
Note that \eqref{6:eq3} can also be derived from 
\[ \int_{-\infty}^\infty h(x)\,\dd x =\int_0^1 \sum_{k=-\infty}^\infty h(x+k)\,{\dd x} \]
for $h(x)=q^{\frac12x(x-1)} w^x$ and Jacobi's triple product identity \cite[Thm 10.4.1]{AAR}.
The integral on the left-hand side of \eqref{6:eq3} 
can be evaluated:
\begin{equation}\label{6:eq4}
 \int_{-\infty}^\infty q^{\frac12x(x-1)} w^x\,\dd x=\frac{\sqrt{2\pi w}\exp\left(\frac{(\log  w)^2}{2\log  q^{-1}}\right)} {q^{1/8}\sqrt{\log  q^{-1}}}.
 \end{equation}
Combining \eqref{6:Feq2}, \eqref{6:eq3}, \eqref{6:eq4} we obtain \eqref{6:int1}.
\end{proof}

\begin{thm}\label{6:Ft2}
Under assumption \eqref{6:ass1} we have the Fourier transform 
\begin{equation}\label{6:int2}
 \int_{-\infty}^\infty f(x;w;q)\,\expe^{-ixt}\,\dd x=
\frac{(\frac{b}{a};q)_\infty}{(-\frac{w}{a}\expe^{-it},-\frac{b}{w}\expe^{it};q)_\infty}
 \frac{\sqrt{2\pi w}\expe^{-\frac12it}\exp\left(\frac{(\log  w-it)^2}{2\log  q^{-1}}\right)} {q^{1/8}\sqrt{\log  q^{-1}}}
\end{equation}
valid for $t$ in the strip
\[ \log  \frac{|b|}{|w|}<\Im t<\log  \frac{|a|}{|w|} .\] 
\end{thm}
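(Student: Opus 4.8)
The plan is to reduce \eqref{6:int2} to Theorem \ref{6:t1} by absorbing the oscillatory factor $\expe^{-ixt}$ into the parameter $w$. Set $w':=w\expe^{-it}$. For $t$ in the indicated strip one has $|w'|=|w|\expe^{\Im t}\in(|b|,|a|)$, so assumption \eqref{6:ass1} holds with $w'$ in place of $w$; moreover, for real $x$, $|f(x;w;q)\expe^{-ixt}|=|f(x;w;q)|\,\expe^{x\Im t}$, which Lemma \ref{6:Fl2} (applied to $w$) bounds by a constant times $(|w|\expe^{\Im t}/|a|)^x$ for $x\ge0$ and by a constant times $(|b|/(|w|\expe^{\Im t}))^{-x}$ for $x\le0$. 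Hence $x\mapsto f(x;w;q)\expe^{-ixt}\in L^1(\R)$ with a majorant that is locally uniform in $t$ on the strip, the left-hand side of \eqref{6:int2} is well-defined, and differentiation under the integral sign shows it is holomorphic in $t$ on the (open, connected) strip.

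First I would check that the right-hand side of \eqref{6:int2} is likewise holomorphic on the strip. The factor $(\tfrac ba;q)_\infty$ is a constant; the factors $(-\tfrac wa\expe^{-it};q)_\infty$ and $(-\tfrac bw\expe^{it};q)_\infty$ are holomorphic and nonvanishing there, since $|{-}\tfrac wa\expe^{-it}|=|w|\expe^{\Im t}/|a|<1$ and $|{-}\tfrac bw\expe^{it}|=|b|/(|w|\expe^{\Im t})<1$, while $(z;q)_\infty=0$ forces $z=q^{-k}$ for some $k\in\No$ and hence $|z|\ge1$; and $\sqrt{2\pi w}\,\expe^{-\frac12 it}\exp\bigl((\log  w-it)^2/(2\log  q^{-1})\bigr)$ is entire in $t$, with $\log  w$, $\sqrt w$ the fixed principal values and the remaining factor $q^{-1/8}(\log  q^{-1})^{-1/2}$ independent of $t$.

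By the identity theorem for holomorphic functions it then suffices to prove \eqref{6:int2} for $t$ in a sufficiently small neighbourhood of $t=0$ in $\C$, which lies in the strip because $\log(|b|/|w|)<0<\log(|a|/|w|)$. For $t$ near $0$ every relevant branch agrees with its principal value: $\log(w\expe^{-it})=\log  w-it$, so for real $x$ we get $w^x\expe^{-ixt}=(w')^x$ and therefore $f(x;w;q)\expe^{-ixt}=f(x;w';q)$, while also $\sqrt{w'}=\sqrt w\,\expe^{-\frac12 it}$, $-\tfrac{w'}a=-\tfrac wa\expe^{-it}$ and $-\tfrac b{w'}=-\tfrac bw\expe^{it}$. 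Substituting $w\mapsto w'$ into \eqref{6:int1} (Theorem \ref{6:t1}) and inserting these identities produces exactly \eqref{6:int2} for $t$ near $0$; analytic continuation in $t$ then extends \eqref{6:int2} to the whole strip.

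The step I expect to be the main obstacle is precisely the bookkeeping of branches. The quantities $w^x$, $(w\expe^{-it})^x$, $\sqrt{w\expe^{-it}}$ and $\exp\bigl((\log (w\expe^{-it}))^2/(2\log  q^{-1})\bigr)$ each depend on a choice of argument, and these choices are simultaneously compatible with the principal-value convention underlying Theorem \ref{6:t1} only when $\arg w-\Re t$ stays in the principal range. This is why the substitution $w\mapsto w\expe^{-it}$ is performed near $t=0$ and the full strip is reached by analytic continuation, rather than by substituting globally for all $t$ at once. (Alternatively, one may fix the branch $\log(w\expe^{-it}):=\log  w-it$ once and for all and track it explicitly throughout, provided one first checks that the evaluation in Theorem \ref{6:t1} is insensitive to this choice in the relevant sense.)
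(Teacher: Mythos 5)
Your proposal is correct and follows essentially the same route as the paper, whose entire proof reads ``Replace $w$ by $w\expe^{-it}$ in \eqref{6:int1}.'' The extra care you take with branch choices and the analytic-continuation argument from a neighbourhood of $t=0$ is a legitimate tightening of that one-line substitution, not a different method.
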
 
\begin{proof}
Replace $w$ by $w\expe^{-it}$ in \eqref{6:int1}.
\end{proof}

We notice a crucial difference comparing \eqref{6:int0} with \eqref{6:int2}. In \eqref{6:int0} the Fourier transform is only defined for $t\in\R$
and it has compact support. In \eqref{6:int2} the Fourier transform is defined in a  strip parallel to the real axis and it is an analytic function
there. Therefore, it cannot have compact support.

\subsection{The limit \texorpdfstring{$q\to1^-$}{q->1}}

In this section we demonstrate that \eqref{6:int0} can be obtained from \eqref{6:int2} in the limit $q\to1^-$.
The notation $f(q)\sim g(q)$ means that $f(q)/g(q)\to 1$ as $q\to 1^-$. We also use the notation $q=\expe^{-u}$ for $u>0$.

\begin{lem}\label{6:asy3}
Let $a\in\C\setminus[1,\infty)$. Then
\[  (a;q)_\infty \sim (1-a)^{1/2}\exp(-u^{-1} \Li(a)) ,\]
where $\Li$ denotes the dilogarithm function \cite[\href{http://dlmf.nist.gov/25.12}{\S 25.12}]{NIST:DLMF}.
\end{lem}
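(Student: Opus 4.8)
The plan is to establish the asymptotic relation $(a;q)_\infty \sim (1-a)^{1/2}\exp(-u^{-1}\Li(a))$ as $q=\expe^{-u}\to 1^-$ (equivalently $u\to 0^+$) by taking logarithms and applying the Euler--Maclaurin summation formula. First I would write
\[ \log  (a;q)_\infty = \sum_{k=0}^\infty \log (1-aq^k) = \sum_{k=0}^\infty \log (1-a\expe^{-uk}), \]
which is legitimate when $a\notin[1,\infty)$ since then $1-a\expe^{-uk}$ stays away from the negative real axis and the branch of $\log $ is unambiguous. The idea is that this sum is a Riemann sum (with spacing $u$) for the integral $\frac1u\int_0^\infty \log (1-a\expe^{-v})\,\dd v$, and Euler--Maclaurin supplies the correction terms. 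The leading integral term should reproduce $-u^{-1}\Li(a)$ after the substitution $x=a\expe^{-v}$, using $\int_0^a \frac{-\log (1-x)}{x}\,\dd x = \Li_2(a)$ in the form the paper writes as $\Li$; the first Euler--Maclaurin correction $\tfrac12[\text{value at }k=0] = \tfrac12\log (1-a)$ produces the prefactor $(1-a)^{1/2}$; and the remaining correction terms are $O(u)$ and hence vanish in the $\sim$ sense.

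Concretely, I would apply Euler--Maclaurin to $h(k):=\log (1-a\expe^{-uk})$ on $[0,\infty)$:
\[ \sum_{k=0}^\infty h(k) = \int_0^\infty h(k)\,\dd k + \tfrac12 h(0) + R, \]
where $R$ collects the derivative boundary term at $0$ and the integral remainder, both of which carry a factor of $u$ (since $h'(k)=O(u)$ uniformly and $h$ decays exponentially) and so tend to $0$. Then
\[ \int_0^\infty \log (1-a\expe^{-uk})\,\dd k = \frac1u\int_0^\infty \log (1-a\expe^{-v})\,\dd v = \frac1u\int_0^a \frac{\log (1-x)}{x}\,\dd x = -\frac{1}{u}\Li(a), \]
and $\tfrac12 h(0)=\tfrac12\log (1-a)$, giving $\log (a;q)_\infty = -u^{-1}\Li(a) + \tfrac12\log (1-a) + o(1)$, which exponentiates to the claimed relation.

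The main obstacle is controlling the remainder uniformly and justifying the interchange of limit and summation/integration when $a$ is complex and, in particular, when $a$ is close to the cut $[1,\infty)$ or close to $1$ itself (where $\log (1-a)$ blows up but the statement still makes sense for each fixed such $a$). One must verify that the branch of $\log $ used in $(a;q)_\infty=\prod(1-aq^k)$ is consistent with the principal branch of $\log (1-a)$ and of $\Li$, tracking how the arguments $1-aq^k$ move as $k$ runs over $\No$ — for $a\notin[1,\infty)$ they sweep a path from $1-a$ to $1$ that does not wind around $0$, so no monodromy issue arises, but this needs a sentence. A clean alternative that sidesteps some of this is to differentiate in a parameter or to use the known asymptotics of the $q$-gamma function (the paper has $\Gamma_q$ and the limit \eqref{gammalimit} available), but the Euler--Maclaurin route is the most direct and self-contained, so I would present that, relegating the uniformity estimate on $R$ to a short paragraph and invoking dominated convergence for the $o(1)$ claim, with the domination $|h(k)|\le C\expe^{-uk}$ valid once $u$ is small, times the harmless factor $u$ from the spacing.
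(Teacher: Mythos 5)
Your proposal follows essentially the same route as the paper: take logarithms, apply the Euler--Maclaurin formula to $f(k)=\log(1-a\expe^{-uk})$, identify the integral term with $-u^{-1}\Li(a)$ and the $\tfrac12 f(0)$ boundary term with $\tfrac12\log(1-a)$, and show the remaining corrections vanish as $u\to0^+$. The paper makes the remainder control explicit (it uses the third-order Euler--Maclaurin remainder with the periodic Bernoulli function $P_3$ and the bound $|f'''(x)|\le |a|(1+|a|)M^{-3}u^3\expe^{-xu}$ with $M=\min\{|1-ta|:t\in[0,1]\}$, yielding an error of order $u^2$), which is exactly the uniformity estimate you defer to a short paragraph; otherwise the arguments coincide.
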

\begin{proof}
Define the function
\[ f(x)=\log (1-a\expe^{-xu})\quad\text{for $x\ge 0$}.\]
By the Euler-Maclaurin formula \cite[p.\ 524]{Knopp1990}, we have 
\[ \sum_{k=0}^n f(k)=\int_0^n f(x)\,\dd x +\tfrac12(f(0)+f(n))-\tfrac1{12}(f'(0)-f'(n))+\int_0^n P_3(x) f'''(x)\,\dd x,\]
where $P_3(x)$ is the function with period $1$ determined by
\[ P_3(x)=\tfrac16x^3-\tfrac14 x^2+\tfrac1{12} x \quad \text{for $x\in[0,1]$} .\]
We note that
\[ f'(x)=\frac{au\expe^{-xu}}{1-a\expe^{-xu}},\quad f'''(x)=\frac{au^3\expe^{-xu}(1+a \expe^{-xu})}{(1-a\expe^{-xu})^3} .\]
We let $n\to\infty$, and get
\[ \sum_{k=0}^\infty f(k)=\int_0^\infty f(x)\,\dd x +\tfrac12f(0)-\tfrac1{12}f'(0)+\int_0^\infty P_3(x) f'''(x)\,\dd x.\]
Now
\[ \log  (a;q)_\infty =\sum_{k=0}^\infty f(k), \quad \int_0^\infty f(x)\,\dd x=-u^{-1} \Li(a) ,\]
and
\[f(0)=\log (1-a),\quad f'(0)=\frac{au}{1-a} .\]
Since $|P_3(x)|\le \frac{\sqrt3}{216}$ and $|f'''(x)|\le |a|(1+|a|)M^{-3}u^3\expe^{-xu}$, where 
\[M=\min\{|1-ta|: t\in[0,1]\},\] 
we estimate
\[ \left|\int_0^\infty P_3(x)f'''(x)\,\dd x\right|\le  \frac{\sqrt3}{216}|a|(1+|a|)M^{-3}u^2=:Ku^2 .\]
Therefore, we find
\begin{equation}\label{6:ineq}
 \left|\log  (a;q)_\infty+u^{-1} \Li(a)-\tfrac12\log (1-a)+\tfrac1{12}\frac{au}{1-a}\right|\le  Ku^2.
\end{equation}
It follows that 
\[ \left|(a;q)_\infty \exp(u^{-1} \Li(a))(1-a)^{-1/2} \exp\left(\tfrac1{12}\frac{au}{1-a}\right)-1\right|\le \expe^{K u^2} K u^2 .\]
This implies the statement of the lemma.
\end{proof}

\begin{lem}\label{6:asy4}
Let $a\in\C\setminus[1,\infty)$ and $\alpha\in \C$. Then
\[ (aq^\alpha;q)_\infty \sim (1-a)^{\frac12-\alpha}\exp(-u^{-1} \Li(a)).\]
\end{lem}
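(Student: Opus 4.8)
The plan is to deduce Lemma~\ref{6:asy4} from Lemma~\ref{6:asy3} by substituting $aq^\alpha$ for $a$ and tracking the resulting two-term correction. Write $q=\expe^{-u}$, so that $q^\alpha=\expe^{-\alpha u}\to1$ as $u\to0^+$. Since $a\in\C\setminus[1,\infty)$ and $[1,\infty)$ is closed, there is $q_0\in(0,1)$ with $aq^\alpha\in\C\setminus[1,\infty)$ for all $q\in[q_0,1)$; thus Lemma~\ref{6:asy3}, applied with $a$ replaced by $aq^\alpha$, gives
\[(aq^\alpha;q)_\infty\sim (1-aq^\alpha)^{1/2}\exp\!\big(-u^{-1}\Li(aq^\alpha)\big).\]

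Next I would show that the right-hand side of this relation is $\sim(1-a)^{\frac12-\alpha}\exp(-u^{-1}\Li(a))$, which finishes the proof by transitivity of $\sim$. Dividing the two candidate asymptotic factors gives
\[\Big(\tfrac{1-aq^\alpha}{1-a}\Big)^{1/2}(1-a)^{\alpha}\exp\!\big(u^{-1}(\Li(a)-\Li(aq^\alpha))\big),\]
where the first factor tends to $1$, the second is constant, and the third must be shown to tend to $(1-a)^{-\alpha}$; the whole quotient then tends to $1$. For the third factor, set $\phi(u):=\Li(a\expe^{-\alpha u})$. Using $\Li'(z)=-z^{-1}\log(1-z)$ with the principal branch (legitimate since $a\notin[1,\infty)$ forces $1-a\notin(-\infty,0]$) one computes $\phi'(u)=\alpha\log(1-a\expe^{-\alpha u})$, hence $\phi'(0)=\alpha\log(1-a)$, and therefore $\Li(aq^\alpha)-\Li(a)=\phi(u)-\phi(0)=\alpha\log(1-a)\,u+o(u)$. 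Dividing by $u$ yields $u^{-1}(\Li(a)-\Li(aq^\alpha))\to-\alpha\log(1-a)$, so the third factor converges to $\exp(-\alpha\log(1-a))=(1-a)^{-\alpha}$, as needed.

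Combining the two displays gives $(aq^\alpha;q)_\infty\sim(1-a)^{\frac12-\alpha}\exp(-u^{-1}\Li(a))$, which is the assertion. I expect the only point needing genuine care to be the branch-cut bookkeeping: one must verify that $aq^\alpha$ stays off $[1,\infty)$ for $q$ near $1$ so that Lemma~\ref{6:asy3} and the differentiation of $\Li$ are valid there, and that the principal powers of $1-a$ combine as $(1-a)^{1/2}(1-a)^{-\alpha}=(1-a)^{\frac12-\alpha}$ — which holds precisely because $1-a$ avoids the negative real axis. Everything else is the single-step Taylor expansion of the dilogarithm above.
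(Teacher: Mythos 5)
Your overall strategy coincides with the paper's: evaluate Lemma \ref{6:asy3} at $aq^\alpha$ and then Taylor-expand the dilogarithm, using $\Li'(z)=-z^{-1}\log(1-z)$ to obtain $\Li(a\expe^{-\alpha u})=\Li(a)+\alpha u\log(1-a)+o(u)$, whence $\exp(-u^{-1}\Li(aq^\alpha))\sim(1-a)^{-\alpha}\exp(-u^{-1}\Li(a))$. That computation, the limit $(1-aq^\alpha)^{1/2}\to(1-a)^{1/2}$, and your remark that the principal powers combine because $1-a\notin(-\infty,0]$ are all correct.

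The one genuine gap is the very first step. Lemma \ref{6:asy3} asserts an asymptotic relation as $q\to1^-$ for a \emph{fixed} argument, and you apply it to the $q$-dependent point $aq^\alpha$. Pointwise asymptotics do not automatically transfer to moving arguments: from $f(b,u)/g(b,u)\to1$ for each fixed $b$ one cannot conclude $f(b(u),u)/g(b(u),u)\to1$ when $b(u)\to a$ without some local uniformity in $b$, and the statement of Lemma \ref{6:asy3} supplies none. This is precisely where the paper is more careful: its proof substitutes $aq^\alpha$ not into the asymptotic conclusion but into the quantitative estimate \eqref{6:ineq} from the proof of Lemma \ref{6:asy3},
\[ \left|\log(b;q)_\infty+u^{-1}\Li(b)-\tfrac12\log(1-b)+\tfrac1{12}\frac{bu}{1-b}\right|\le K(b)\,u^2, \]
whose constant $K(b)=\tfrac{\sqrt3}{216}|b|(1+|b|)M(b)^{-3}$ with $M(b)=\min\{|1-tb|:t\in[0,1]\}$ depends continuously on $b$; since $aq^\alpha\to a$ and $M(a)>0$ (as $a\notin[1,\infty)$), the constant stays bounded along $b=aq^\alpha$, which gives the needed uniformity. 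With that substitution in place of your appeal to the lemma's conclusion, the rest of your argument goes through verbatim.
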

\begin{proof}
Use  \eqref{6:ineq} with $aq^\alpha$ in place of $a$ and note that 
\[ \Li(a\expe^{-\alpha u})=\Li(a)+\alpha u \log  (1-a)+O(u^2)\quad \text{as $u\to0$}.\]
\end{proof}

We are now in a position to demonstrate that \eqref{6:int2} is a $q$-analogue of \eqref{6:int0}.
Consider \eqref{6:int2} with $w=1$, $q\in(0,1)$ and  $a=q^{1-\beta}$, $b=q^\alpha$ with $\beta>2$ and $\alpha>1$.
If we divide both sides of \eqref{6:int2} by $(q;q)_\infty^2 (1-q)^{2-\alpha-\beta}$
then the left-hand side is
\[ \int_{-\infty}^\infty f_q(x)\,\expe^{-ixt}\,\dd x,\]
where
\[ f_q(x):=\frac{q^{\frac12x(x-1)}}{\Gamma_q(x+\alpha)\Gamma_q(\beta-x)}.\]
In order to show that 
\[\lim_{q\to 1^-} \int_{-\infty}^\infty f_q(x)\,\expe^{-ixt}\,\dd x =\int_{-\infty}^\infty \frac{\expe^{-ixt}\,\dd x}{\Gamma(x+\alpha)\Gamma(\beta-x)},\]
we apply Lebesgue's dominated convergence theorem. Pointwise convergence of the integrands follows from \eqref{gammalimit},
and the dominating function is supplied by the following lemma.

\begin{lem}\label{6:dominate}
There are constants $K,L$ independent of $q,x$ such that
\begin{eqnarray}
 |f_q(x)|&\le& K (1+x)^{1-\beta}\quad\text{for $q\in[\tfrac12,1)$, $x\ge 0$},\label{6:est1}\\
|f_q(x)|&\le &L (1+|x|)^{-\alpha}\quad \text{for $q\in[\tfrac12,1)$, $x\le 0$}.\label{6:est2}
\end{eqnarray}
\end{lem}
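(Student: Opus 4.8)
The plan is to split into the two regimes $x\ge 0$ and $x\le 0$ and in each case reduce the $q$-factorials defining $f_q(x)$ to products that are controlled uniformly in $q\in[\tfrac12,1)$. Write $q=\expe^{-u}$ with $u\in(0,\log 2]$, so the constraint $q\ge\tfrac12$ is just $u\le\log 2$. Recall $f_q(x)=q^{\frac12x(x-1)}/\bigl(\Gamma_q(x+\alpha)\Gamma_q(\beta-x)\bigr)$ and that by definition of $\Gamma_q$,
\[
\frac{1}{\Gamma_q(x+\alpha)\Gamma_q(\beta-x)}
=\frac{(q^{x+\alpha};q)_\infty (q^{\beta-x};q)_\infty}{(q;q)_\infty^2 (1-q)^{2-\alpha-\beta}} ,
\]
so up to the $q$-dependent but $x$-independent factor $(q;q)_\infty^{-2}(1-q)^{\alpha+\beta-2}$ — which is bounded above and below on $[\tfrac12,1)$ — we must estimate $q^{\frac12 x(x-1)}(q^{x+\alpha};q)_\infty (q^{\beta-x};q)_\infty$.

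First I would handle $x\ge 0$. The key point is that $(q^{x+\alpha};q)_\infty$ stays bounded: since $\alpha>1$ and $x\ge 0$ the base exponent is $\ge 1$, so $0<(q^{x+\alpha};q)_\infty\le 1$ uniformly, and in fact it is $O(1)$. For the other factor, use the relation of Lemma~\ref{6:Fl1}-type algebra to peel off an integer shift: writing $\beta-x$ and letting $n=\lfloor x\rfloor$, $s=\{x\}-x+\lfloor\beta\rfloor$ or more simply using
\[
(q^{\beta-x};q)_\infty = (q^{\beta-x-n};q)_\infty \,(q^{\beta-x};q)_n^{-1}\cdot(\text{sign-corrected }q\text{-factorial via }\eqref{factorials}),
\]
one sees that $q^{\frac12 x(x-1)}(q^{\beta-x};q)_\infty$ reproduces, up to bounded factors, an expression of the form $q^{-\frac12 n(n-1)}\times(\text{a }q\text{-Pochhammer of base exponent near }1)$ times $q^{\frac12 x(x-1)}$, and the Gaussian factors $q^{\frac12x(x-1)}q^{-\frac12n(n-1)}$ combine to something uniformly bounded in $u$. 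After this reduction the remaining ratio is of the shape $(q^{A};q)_n/(q^{B};q)_n$ with $A,B>0$ fixed and $B-A=\beta-1$, so Lemma~\ref{2:l2} (or Lemma~\ref{2:l4} when complex shifts intervene, but here $\alpha,\beta$ are real $>1$) gives the bound $\le K(1+n)^{-(B-A)}=K(1+n)^{1-\beta}\le K'(1+x)^{1-\beta}$, which is \eqref{6:est1}. The symmetric computation with $x\le 0$, swapping the roles of $q^{x+\alpha}$ and $q^{\beta-x}$ (now it is $(q^{\beta-x};q)_\infty$ that is trivially bounded and $(q^{x+\alpha};q)_\infty$ that needs the shift argument, using $\alpha>1$), yields $|f_q(x)|\le L(1+|x|)^{-\alpha}$, which is \eqref{6:est2}.

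The main obstacle is the careful bookkeeping of the Gaussian prefactor $q^{\frac12x(x-1)}$ against the $q^{\frac12 n(n+1)}$ (equivalently $q^{-\frac12 n(n-1)}$) that is produced by converting $(a;q)_\infty$-tails across integer shifts via \eqref{factorials}: one must check that the net exponent of $q$ is bounded below uniformly in $u\in(0,\log 2]$ so that raising $q<1$ to it does not blow up. Concretely, writing $x=n+\theta$ with $\theta\in[0,1)$, the difference $\tfrac12 x(x-1)-\tfrac12 n(n-1)$ is $O(n)$, and it multiplies a $q$-exponent that, combined with the remaining Pochhammer, is exactly what Lemma~\ref{2:l3}/\ref{2:l4} are designed to absorb (those lemmas estimate $(q^\alpha;q)_n/(q^\beta;q)_n\,q^{n\tau}$ with a genuine power-of-$n$ bound precisely when a positive power $q^{n\tau}$ is present). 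So the resolution is to route the estimate through Lemma~\ref{2:l4} with the appropriate $\tau>0$ coming from the leftover Gaussian exponent, rather than through the cruder Lemma~\ref{2:l2}. Once the uniform bounds \eqref{6:est1}–\eqref{6:est2} are in hand, the dominated-convergence argument completes the passage $q\to1^-$ in \eqref{6:int2} to recover \eqref{6:int0}, as already indicated in the text preceding the lemma.
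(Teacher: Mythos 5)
Your plan—unwind $f_q$ into infinite $q$-Pochhammer symbols and control the resulting ratios via Lemmas \ref{2:l2}--\ref{2:l4}—founders on a false claim at the very first step. You factor out $(q;q)_\infty^{-2}(1-q)^{\alpha+\beta-2}$ and assert that it is ``bounded above and below on $[\tfrac12,1)$.'' It is not: as $q\to1^-$ one has $(q;q)_\infty\to0$ exponentially fast (with $q=\expe^{-u}$, $(q;q)_\infty$ decays like $\expe^{-\pi^2/(6u)}$ up to algebraic factors), so $(q;q)_\infty^{-2}$ blows up exponentially while $(1-q)^{\alpha+\beta-2}$ decays only polynomially. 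Once this unbounded factor is pulled out, your bound $(q^{x+\alpha};q)_\infty\le 1$ discards exactly the compensating exponential smallness of the infinite products: e.g.\ at $x=0$ one has $(q^{\alpha};q)_\infty=(q;q)_\infty(1-q)^{1-\alpha}/\Gamma_q(\alpha)$, so your estimate at $x=0$ degenerates to $|f_q(0)|\le (q;q)_\infty^{-2}(1-q)^{\alpha+\beta-2}\to\infty$, whereas $f_q(0)\to1/(\Gamma(\alpha)\Gamma(\beta))$. No finite $K$ independent of $q$ can come out of this decomposition, so \eqref{6:est1} is not proved. The middle step is also only gestured at: if you actually carry out the integer-shift reduction you land on $q^{n(\beta-1)}(q^{1-\beta};q)_n/(q^{\alpha};q)_n$ times $f_q(0)$, with the \emph{negative} parameter $1-\beta$, not a ratio $(q^A;q)_n/(q^B;q)_n$ with $A,B>0$ and $B-A=\beta-1$; Lemmas \ref{2:l2} and \ref{2:l3} do not apply as stated, and only Lemma \ref{2:l4} (with $\tau=\beta-1$) could be invoked after peeling off finitely many factors.

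The paper's proof avoids all of this by never separating the normalization. The functional equation $\Gamma_q(x+1)=\frac{1-q^x}{1-q}\Gamma_q(x)$ gives the exact one-step recurrence
\[
f_q(x+1)=\frac{q^x-q^{\beta-1}}{1-q^{x+\alpha}}\,f_q(x),
\]
in which the Gaussian factor and every $(1-q)$ and $(q;q)_\infty$ cancel identically. Bernoulli's inequality $(1+y)^r\le1+ry$ with $y=q^x-1$, $r=(\beta-1)/x$ yields the $q$-uniform bound $0\le(q^{\beta-1}-q^x)/(1-q^{x+\alpha})\le(x-\beta+1)/x$ for $x\ge\beta-1$, and iterating telescopes into $\Gamma(x)\Gamma(x-\beta+1+n)/(\Gamma(x-\beta+1)\Gamma(x+n))=O(n^{1-\beta})$; combined with boundedness of $|f_q|$ on $q\in[\tfrac12,1)$, $x\in[0,\beta]$ (locally uniform convergence of $1/\Gamma_q$), this gives \eqref{6:est1}, and \eqref{6:est2} follows symmetrically. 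If you wish to salvage your route, you must replace ``$(q^{x+\alpha};q)_\infty\le1$'' by a two-sided comparison of each infinite product with $(q;q)_\infty$ (in the spirit of Lemma \ref{6:asy4}), which in effect reconstructs this recurrence.
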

\begin{proof}
The $q$-gamma function satisfies
\[ \Gamma_q(x+1)=\frac{1-q^x}{1-q} \Gamma_q(x) \]
which implies
\[ f_q(x+1)=\frac{q^x-q^{\beta-1}}{1-q^{x+\alpha}} f_q(x) .\]
Using the Bernoulli inequality
\[ (1+y)^r\le 1+ry\quad\text{with $y=q^x-1$, $r=\frac{\beta-1}{x}$},\]
we estimate 
\[ 0\le\frac{q^{\beta-1}-q^x}{1-q^{x+\alpha}}\le \frac{q^{\beta-1}-q^x}{1-q^x}\le \frac{x-\beta+1}{x} \quad\text{for $q\in(0,1)$, $x\ge \beta-1$}.\]
For $n\in\N$, $x\ge \beta-1$, we get
\begin{equation}\label{6:ineq2}
 |f_q(x+n)|\le |f_q(x)|\prod_{k=1}^n \frac{x-\beta+k}{x+k-1}=|f_q(x)|\frac{\Gamma(x)\Gamma(x-\beta+1+n)}{\Gamma(x-\beta+1)\Gamma(x+n)} .
 \end{equation}
 Since $\frac{1}{\Gamma_q(z)}$ converges to $\frac{1}{\Gamma(z)}$ locally uniformly on $\C$ as $q\to1^-$, $|f_q(x)|$ is bounded on $q\in[\tfrac12,1)$, $x\in[0,\beta]$.
Now \eqref{6:ineq2} with $x\in[\beta-1,\beta]$ and $y=x+n$ shows that there is a constant $K_1$ such that
\[ |f_q(y)|\le K_1 \frac{\Gamma(y-\beta+1)}{\Gamma(y)} \quad \text{for $q\in[\tfrac12,1)$, $y\ge \beta$}.\]
This proves \eqref{6:est1}. The proof of \eqref{6:est2} is similar.
\end{proof}

We now wish to determine the limit of the right-hand side of \eqref{6:int2} divided by $(q;q)_\infty^2(1-q)^{2-\alpha-\beta}$, as $q\to 1^-$, that is the limit of
\begin{equation}\label{6:defh}
 h(q):=\frac{(1-q)^{\alpha+\beta-2}}{(q;q)_\infty^2} \frac{(q^{\alpha+\beta-1};q)_\infty}{(-q^{\beta-1}\expe^{-it},-q^\alpha \expe^{it};q)_\infty}
 \frac{\sqrt{2\pi}\expe^{-\frac12it}\exp(-\frac12u^{-1}t^2)} {q^{1/8}\sqrt{u}}
 \end{equation}
as $q\to 1^-$ by direct inspection.
We have 
\begin{equation}\label{6:sim1}
 (q^{\alpha+\beta-1};q)_\infty\sim \frac{(q;q)_\infty(1-q)^{2-\alpha-\beta}}{\Gamma(\alpha+\beta-1)} 
 \end{equation}
by \eqref{gammalimit}, and
\begin{eqnarray*}
 (-q^{\beta-1}\expe^{-it},q)_\infty&\sim &(1+\expe^{-it})^{\frac32-\beta}\exp\left(-u^{-1}\Li(-\expe^{-it})\right),\\
 (-q^\alpha \expe^{it};q)_\infty&\sim &(1+\expe^{it})^{\frac12-\alpha}\exp\left(-u^{-1}\Li(-\expe^{it})\right),
\end{eqnarray*}
by Lemma \ref{6:asy4}.
Note that \cite[\href{http://dlmf.nist.gov/25.12.E7}{(25.12.7)}, \href{http://dlmf.nist.gov/25.12.E8}{(25.12.8)}]{NIST:DLMF} yields
\begin{equation}\label{6:sim2}
 \Li(-\expe^{-it})+\Li(-\expe^{it})=\tfrac12t^2-\tfrac16\pi^2\quad \text{for $t\in[-\pi,\pi]$},
 \end{equation}
and
\[ (1+\expe^{-it})^{\beta-\frac32}(1+\expe^{it})^{\alpha-\frac12}=(2\cos(\tfrac12 t))^{\alpha+\beta-2}\expe^{-\frac12it(\beta-\alpha-1)}.\]
Therefore, for $t\in(-\pi,\pi)$,
\begin{equation}\label{6:sim3}
  \frac{1}{(-q^{\beta-1}\expe^{-it},q)_\infty (-q^\alpha \expe^{it};q)_\infty} \sim  \exp\left(u^{-1}(\tfrac12t^2-\tfrac16\pi^2)\right)
(2\cos(\tfrac12t))^{\alpha+\beta-2} \expe^{-\frac12it(\beta-\alpha-1)} .
\end{equation}
Substituting \eqref{6:sim1}, \eqref{6:sim3} and \cite[\href{http://dlmf.nist.gov/17.2.E6_1}{(17.2.6\_1)}]{NIST:DLMF} in \eqref{6:defh} we obtain that
\[
 h(q)\sim \frac{(2\cos(\frac12t))^{\alpha+\beta-2}}{\Gamma(\alpha+\beta-1)}\expe^{-\tfrac12 it(\beta-\alpha)} 
 \]
 as desired. Using \eqref{gammalimit} in place of Lemma \ref{6:asy4} we obtain $\lim_{q\to1^-}h(q)=0$ for $t=\pm \pi$.
 If $t\in\R$ is outside the interval $[-\pi,\pi]$ then $h(q)$ converges to $0$ as $q\to1^-$. For example, if $t\in(\pi,3\pi)$
 then \eqref{6:sim2} gives 
 \[
 \Li(-\expe^{-it})+\Li(-\expe^{it})=\tfrac12(t-2\pi)^2-\tfrac16\pi^2,
 \]
 and this leads to $\lim_{q\to 1^-} h(q)=0$.

\subsection{Bilateral basic hypergeometric series}

We now follow the line of reasoning in Section \ref{sec4}. For $j=1,\dots,m$ we define entire functions
\[ f_j(x):=(b_jq^x;q)_\infty (a_j^{-1}q^{1-x};q)_\infty q^{\frac12x(x-1)}w_j^x,\]
where $w_1,\dots,w_m\in\C\setminus\{0\}$.
Consider their product 
\[ f(x):=\prod_{j=1}^m f_j(x) ,\]
and let 
\[ F(t):=\int_{-\infty}^\infty f(x)\,\expe^{-ixt}\,\dd x\]
be its Fourier transform.
It follows from Lemma \ref{6:Fl1} that this Fourier transform exists if
\begin{equation}\label{6:ass2}
 |b_1|\cdots|b_m|<|w_1|\cdots|w_m|<|a_1|\cdots |a_m|.
\end{equation}

\begin{thm}\label{6:Ft3}
Let $q\in\C$, $0<|q|<1$, and suppose that $a_1,\dots,a_m\in\C$, $b_1,\dots, b_m\in\C$, $w_1,\dots,w_m\in\C$ 
satisfy \eqref{6:ass2}.
Then, for all $t\in\R$, 
\[ \qpsihyp{m}{m}{a_1,\dots,a_m}{b_1,\dots,b_m}{q,z}=
\lim_{r\to1^-} \int_{-\infty}^\infty f(x)\,\expe^{-ixt}\frac{1-r^2}{1-2r\cos(2\pi x)+r^2} \,\dd x,\]
where 
\[ z:=(-1)^m \expe^{-it}\prod_{j=1}^m \frac{w_j}{a_j} .\]
\end{thm}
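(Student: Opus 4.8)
The plan is to recognise the weight in the integrand as the Poisson kernel of the unit disc,
\[
\frac{1-r^{2}}{1-2r\cos(2\pi x)+r^{2}}=\sum_{n\in\Z}r^{|n|}\,\expe^{2\pi inx}\qquad(0\le r<1),
\]
and then to run the Poisson summation formula (Theorem~\ref{3:Poisson}) with $\omega=2\pi$ exactly as in the proofs of Theorems~\ref{4:t2} and~\ref{4:t3}; the one genuinely new feature is the regularising factor $r^{|n|}$, which has to be removed at the very end by letting $r\to1^{-}$.

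First I would fix $r\in[0,1)$. By Lemma~\ref{6:Fl2} and assumption~\eqref{6:ass2} we have $f\in L^{1}(\R)$, and since the Poisson-kernel series converges uniformly in $x$ while $\int_{\R}|f(x)|\sum_{n\in\Z}r^{|n|}\,\dd x=\|f\|_{1}\sum_{n\in\Z}r^{|n|}<\infty$, Fubini's theorem lets me interchange summation and integration and read off the resulting integrals as shifts of $F$:
\[
\int_{-\infty}^{\infty}f(x)\,\expe^{-ixt}\,\frac{1-r^{2}}{1-2r\cos(2\pi x)+r^{2}}\,\dd x
=\sum_{n\in\Z}r^{|n|}\int_{-\infty}^{\infty}f(x)\,\expe^{-ix(t-2\pi n)}\,\dd x
=\sum_{n\in\Z}r^{|n|}F(t-2\pi n).
\]

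Next I would show that $\sum_{n\in\Z}|F(t+2\pi n)|<\infty$ for every $t\in\R$; this is what makes both the limit $r\to1^{-}$ and the hypothesis of Theorem~\ref{3:Poisson} legitimate. The function $f$ is entire and, by the estimates underlying Lemma~\ref{6:Fl2} (which persist for complex $x$ with $\Im x$ bounded; the decay as $\Re x\to\pm\infty$ comes from the product over $j$ together with~\eqref{6:ass2}), decays geometrically on every horizontal strip $|\Im x|\le M$. Shifting the contour in $F(t)=\int_{\R}f(x)\,\expe^{-ixt}\,\dd x$ to $\Im x=-\delta$ when $t>0$ and to $\Im x=+\delta$ when $t<0$ therefore gives $|F(t)|\le C_{\delta}\,\expe^{-\delta|t|}$ for each fixed $\delta>0$, which is far more than enough. (Alternatively, writing $F$ as a convolution of the transforms of Theorem~\ref{6:Ft2} already exhibits its rapid decay.) With this bound, dominated convergence yields
\[
\lim_{r\to1^{-}}\sum_{n\in\Z}r^{|n|}F(t-2\pi n)=\sum_{n\in\Z}F(t-2\pi n)=\sum_{n\in\Z}F(t+2\pi n),
\]
and Theorem~\ref{3:Poisson} with $\omega=2\pi$ gives $\sum_{n\in\Z}F(t+2\pi n)=\sum_{n\in\Z}f(n)\,\expe^{-int}$.

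It then remains to evaluate $\sum_{n\in\Z}f(n)\,\expe^{-int}$. Using the relation $g(s+n)=(-1)^{n}g(s)(c^{-1}q^{1+s};q)_{n}$ proved inside Lemma~\ref{6:Fl1} (equivalently, applying \eqref{factorials} to $(a_{j}^{-1}q^{1-n};q)_{\infty}$ and $(b_{j}q^{n};q)_{\infty}$), one computes for every $n\in\Z$
\[
f_{j}(n)=(b_{j};q)_{\infty}\,(a_{j}^{-1}q;q)_{\infty}\,(-1)^{n}\,\frac{(a_{j};q)_{n}}{(b_{j};q)_{n}}\left(\frac{w_{j}}{a_{j}}\right)^{n}.
\]
Multiplying these over $j=1,\dots,m$ and summing over $n$ then expresses $\sum_{n\in\Z}f(n)\,\expe^{-int}$ in terms of the bilateral basic hypergeometric series~\eqref{psi} with argument $z=(-1)^{m}\expe^{-it}\prod_{j=1}^{m}w_{j}/a_{j}$ (note that~\eqref{6:ass2} is precisely the absolute-convergence condition $\prod|b_{j}|/\prod|a_{j}|<|z|<1$ for that series), which is the asserted identity. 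I expect the main obstacle to be not this last, essentially algebraic, step, but the analytic bookkeeping in the middle: obtaining enough decay of the Fourier transform $F$, uniformly in $r$, to push the $r\to1^{-}$ limit through and to invoke Theorem~\ref{3:Poisson}; beyond that the argument is a direct transcription of the real-variable reasoning of Section~\ref{sec4}.
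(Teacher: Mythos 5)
Your proposal is correct and follows essentially the same route as the paper's proof: expand the Poisson kernel as $\sum_n r^{|n|}\expe^{2\pi i n x}$, interchange sum and integral by Fubini, pass to the limit $r\to1^{-}$ by dominated convergence, apply Theorem~\ref{3:Poisson} with $\omega=2\pi$, and identify $\sum_n f(n)\expe^{-int}$ with the ${}_m\psi_m$ series via the computation of $f_j(n)$ from \eqref{factorials}. Your contour-shift argument for the summability of $\sum_n|F(t+2\pi n)|$ supplies a justification that the paper leaves implicit (it merely invokes uniform convergence of the $r$-regularized series), and, like the paper's own proof, your final identity carries the constant $\prod_{j}(b_j;q)_\infty(a_j^{-1}q;q)_\infty$ that is suppressed in the theorem statement.
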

\begin{proof}
We note that
\[ (b_jq^n;q)_\infty=\frac{(b_j;q)_\infty}{(b_j;q)_n}\]
and, by \eqref{factorials},
\[ (a_j^{-1}q^{1-n};q)_\infty=\frac{(a_j^{-1}q;q)_\infty (a_j;q)_n}{(-a_j)^n q^{\frac12n(n-1)}}.\]
Therefore, we have
\[ f_j(n)=(b_j;q)_\infty(a_j^{-1}q;q)_\infty\frac{(a_j;q)_n}{(b_j;q)_n}
\left(-\frac{w_j}{a_j}\right)^n ,\]
so
\begin{equation}\label{6:sum1}
 \sum_{n\in\Z} f(n)\,\expe^{-int}=\left(\prod_{j=1}^m (b_j;q)_\infty(a_j^{-1}q;q)_\infty\right)
\qpsihyp{m}{m}{a_1,\dots,a_m}{b_1,\dots,b_m}{q,z} .
\end{equation}
Theorem \ref{3:Poisson} with $\omega=2\pi$ implies
\begin{equation}\label{6:sum2}
 \sum_{n\in\Z} f(n)\,\expe^{-int}=\sum_{n\in\Z} \int_{-\infty}^\infty f(x)\,\expe^{-i(t+2n\pi)x} \,\dd x .
 \end{equation}
Uniform convergence of the series for $r\in[0,1]$ and Lebesgue's dominated convergence imply that
\begin{align*}
 \lim_{r\to1^-} \sum_{n\in\Z}  \int_{-\infty}^\infty f(x)r^{|n|}\expe^{-i(t+2n\pi)x}\,\dd x
&=\sum_{n\in\Z} \lim_{r\to1^-} \int_{-\infty}^\infty f(x)r^{|n|}\expe^{-i(t+2n\pi)x}\,\dd x \\
&=\sum_{n\in\Z}  \int_{-\infty}^\infty f(x)\,\expe^{-i(t+2n\pi)x}\,\dd x.
\end{align*}
Since
\[ \sum_{n\in\Z} \int_{-\infty}^\infty \left|f(x)r^{|n|}\expe^{-i(t+2n\pi)x}\right|\,\dd x
\le \frac{1+r}{1-r}\int_{-\infty}^\infty |f(x)|\,\dd x<\infty\]
we have 
\begin{align*}
 \sum_{n\in\Z}  \int_{-\infty}^\infty f(x)r^{|n|} \expe^{-i(t+2n\pi)x}\,\dd x
&=\int_{-\infty}^\infty \sum_{n\in\Z} f(x)r^{|n|}\expe^{-i(t+2n\pi)x}\,\dd x\\
&=\int_{-\infty}^\infty f(x)\,\expe^{-ixt} \frac{1-r^2}{1-2r\cos(2\pi x)+r^2} \,\dd x,
\end{align*}
so application of \eqref{6:sum1}, \eqref{6:sum2} completes the proof.
\end{proof}

\subsection{Integral representations for a \texorpdfstring{${}_6\psi_6$ and other basic bilateral series}{5:6psi6}}

From \cite[Theorem 4.5]{CohlVolkmer2024}, one has the following $q$-beta integral
\begin{eqnarray}
&&\hspace{-2.4cm}
{\sf I}(\alpha;{\bf a};q):=\int_{-\infty}^\infty
(1+q^{2x}\alpha^2)
(-q^{x+1}\alpha {\mathbf a},\frac{q^{1-x}}{\alpha}{\mathbf a}
;q)_\infty 
q^{2x^2-x}\alpha^{4x}\,\dd x\nonumber\\
&&\hspace{-0.8cm}=\frac{\sqrt{2\pi}\,\alpha \exp\left(\frac{2(\log \alpha)^2}{\log q^{-1}}\right)(-qab,-qac,-qad,-qbc,-qbd,-qcd;q)_\infty}
{q^\frac18\sqrt{\log q^{-1}}(qabcd;q)_\infty},\label{qbetaCohl}
\end{eqnarray}
which is valid for $0<|q|<1$, $\alpha,a,b,c,d\in\C\setminus\{0\}$, ${\bf a}$ be the multiset given by $\{a,b,c,d\}$, and $|abcd|<|q|^{-1}$. It is straightforward to verify by using \eqref{5:6psi6}
that the $q$-beta integral \eqref{qbetaCohl} can be written in terms a basic bilateral very-well-poised ${}_6\psi_6$, namely 
\begin{eqnarray}
&&\hspace{-2cm}{\sf I}(\alpha;{\bf a};q)
=\frac{\sqrt{2\pi}\,\alpha \exp\left(\frac{2(\log \alpha)^2}{\log q^{-1}}\right)(iq^{\frac54}{\bf a},iq^{\frac34}{\bf a};q)_\infty}
{q^\frac18\sqrt{\log q^{-1}}(q,q^\frac12,q^\frac32;q)_\infty}
\qpsihyp66{\pm q^\frac54,-\frac{iq^\frac14}{\bf a}}{\pm q^\frac14,iq^\frac54{\bf a}}{q,qabcd}.
\label{psi66rep}
\end{eqnarray}
Now in order to demonstrate how this is a $q$-extension of a  Ramanujan-type integral, make the replacement 
$\{\alpha,a,b,c,d\}\mapsto\{-iq^\alpha,-iq^a,-iq^b,-iq^c,-iq^d\}$ and rewrite the above integral in terms of $q$-gamma functions, and one obtains
\begin{eqnarray}
&&\hspace{-0.5cm}
\int_{-\infty}^\infty
\frac{\Gamma_q(2(x+\alpha)+1)\,q^{2x^2-x+4\alpha x}\expe^{-2i\pi x}}{\Gamma_q(2(x+\alpha),
1+a\pm(x+\alpha),
1+b\pm(x+\alpha),
1+c\pm(x+\alpha),
1+d\pm(x+\alpha)
)}\,\dd x
\nonumber\\
&&\hspace{-0.1cm}=\frac{-i\sqrt{2\pi}q^\alpha \exp\left(\frac{2(\log (-iq^\alpha))^2}{\log q^{-1}}\right)}
{q^\frac18(1-q)\sqrt{\log q^{-1}}(q;q)_\infty^3}
\frac{\Gamma_q(a+b+c+d+1)}{\Gamma_q(a+b+1,a+c+1,a+d+1,b+c+1,b+d+1,c+d+1)}\nonumber\\
&&\hspace{-0.1cm}=\frac{-i\sqrt{2\pi}q^\alpha \exp\left(\frac{2(\log (-iq^\alpha))^2}{\log q^{-1}}\right)}
{q^\frac18(1-q)\sqrt{\log q^{-1}}(q;q)_\infty^3}
\frac{\Gamma_q(\frac12,\frac32)}{\Gamma_q(\frac54+a,\frac54+b,\frac54+c,\frac54+d,\frac34+a,\frac34+b,\frac34+c,\frac34+d)}\nonumber\\
&&\hspace{6.7cm}\times\qpsihyp66{\pm q^\frac54,q^{\frac14-a},q^{\frac14-b},q^{\frac14-c},q^{\frac14-d}}{\pm q^\frac14,q^{\frac54+a},q^{\frac54+b},q^{\frac54+c},q^{\frac54+d}}{q,q^{a+b+c+d+1}}.
\label{qbetaCohlgam}
\end{eqnarray}
Note that one has the following limit
\begin{equation}
\lim_{q\to 1^{-}}\frac{iq^{\alpha-\frac18}\sqrt{2\pi}
\exp\left(\frac{2(\log(-iq^\alpha))^2}{\log q^{-1}}\right)}{(q;q)_\infty^3(1-q)\sqrt{\log q^{-1}}}
=-\frac{i\expe^{2i\pi\alpha}}{2\pi}.
\label{limit}
\end{equation}
Starting with \eqref{qbetaCohlgam} and
using \eqref{limit}, the $q\to 1^{-}$ limit of the above integral exists and it is given by Theorem \ref{5:tm5b} above.

\medskip
\noindent One might consider limits of \eqref{qbetaCohl} and \eqref{psi66rep}, and ask whether there exist generalizations of these integrals and whether appropriate $q\to1^{-}$ limits exist or not. Let's start with the limit as $d\to 0$ which gives us
\begin{eqnarray}
&&\hspace{-2.4cm}
\int_{-\infty}^\infty
(1+q^{2x}\alpha^2)
(-q^{x+1}\alpha {\mathbf a},\frac{q^{1-x}}{\alpha}{\mathbf a}
;q)_\infty 
q^{2x^2-x}\alpha^{4x}\,\dd x\nonumber\\
&&\hspace{-0.8cm}=\frac{\sqrt{2\pi}\,\alpha \exp\left(\frac{2(\log \alpha)^2}{\log q^{-1}}\right)(-qab,-qac,-qbc;q)_\infty}
{q^\frac18\sqrt{\log q^{-1}}},\nonumber\\
&&\hspace{-0.8cm}=\frac{\sqrt{2\pi}\,\alpha \exp\left(\frac{2(\log \alpha)^2}{\log q^{-1}}\right)(iq^{\frac54}{\bf a},iq^{\frac34}{\bf a};q)_\infty}
{q^\frac18\sqrt{\log q^{-1}}(q,q^\frac12,q^\frac32;q)_\infty}
\qpsihyp56{\pm q^\frac54,-\frac{iq^\frac14}{\bf a}}{\pm q^\frac14,iq^\frac54{\bf a},0}{q,-iq^{\frac54}abc}.
\label{qbetaCohl2}
\end{eqnarray}
which is valid for $0<|q|<1$, $\alpha,a,b,c\in\C\setminus\{0\}$, ${\bf a}$ be the multiset given by $\{a,b,c\}$. Now in order to inquire about whether the $q\to1^{-}$ limit exists, we make the replacement $\{\alpha,a,b,c\}\mapsto\{-iq^\alpha,-iq^a,-iq^b,-iq^c\}$ and
rewrite 
\eqref{qbetaCohl2} in terms of $q$-gamma functions.
\begin{eqnarray}
&&\hspace{-1.0cm}\int_{-\infty}^\infty
\frac{\Gamma_q(2(x+\alpha)+1)q^{2x^2-x+4\alpha x}\expe^{-2i\pi x}}
{\Gamma_q(2(x+\alpha),1+a\pm(x+\alpha),1+b\pm(x+\alpha),1+c\pm(x+\alpha))}
\dd x\nonumber\\
&&\hspace{-0.1cm}=\frac{-i\sqrt{2\pi}q^\alpha \exp\left(\frac{2(\log (-iq^\alpha))^2}{\log q^{-1}}\right)}
{q^\frac18(1-q)\sqrt{\log q^{-1}}(q;q)_\infty^3}
\frac{1}{\Gamma_q(a+b+1,a+c+1,b+c+1)}\nonumber\\
&&\hspace{-0.1cm}=\frac{-i\sqrt{2\pi}q^\alpha \exp\left(\frac{2(\log (-iq^\alpha))^2}{\log q^{-1}}\right)}
{q^\frac18(1-q)\sqrt{\log q^{-1}}(q;q)_\infty^3}
\frac{\Gamma_q(\frac12,\frac32)}{\Gamma_q(\frac54+a,\frac54+b,\frac54+c,\frac34+a,\frac34+b,\frac34+c)}\nonumber\\
&&\hspace{6.7cm}\times\qpsihyp56{\pm q^\frac54,q^{\frac14-a},q^{\frac14-b},q^{\frac14-c}}{\pm q^\frac14,q^{\frac54+a},q^{\frac54+b},q^{\frac54+c},0}{q,q^{\frac54+a+b+c}}.
\label{qbetaCohlgam2}
\end{eqnarray}
Now taking the limit as $q\to 1^{-}$ of \eqref{qbetaCohlgam2} using
\eqref{limit} reduces the right-hand side 
to a very-well-poised ${}_4H_4(-1)$ which 
can be summed using \eqref{5:4H4m1sum}, namely
\begin{eqnarray}
&&\hspace{-1.4cm}\int_{-\infty}^\infty
\frac{\dd x}{\Gamma(\pm 2x,1+a\pm x,1+b\pm x,1+c\pm x)}=-\frac{1}{2\pi^2}\frac{1}{\Gamma(a+b+1,a+c+1,b+c+1)}\nonumber\\
&&\hspace{0.0cm}=-\frac{1}{4\pi}
\frac{1}{\Gamma(\frac54+a,\frac54+b,\frac54+c,\frac34+a,\frac34+b,\frac34+c)}
\Hhyp44{\frac54,\frac14-a,\frac14-b,\frac14-c}{\frac14,\frac54+a,\frac54+b,\frac54+c}{-1}.
\label{H44int}
\end{eqnarray}
We note that \eqref{H44int} agrees with \eqref{5:m4c} when we set $a=\frac13$ in \eqref{5:m4c}. If one takes the limit as $c\to 0$ in \eqref{qbetaCohl2}, then one obtains
\begin{eqnarray}
&&\hspace{-1.0cm}
\int_{-\infty}^\infty
(1+q^{2x}\alpha^2)
(-q^{x+1}\alpha a,-q^{x+1}\alpha b,\frac{q^{1-x}}{\alpha}{a},
\frac{q^{1-x}}{\alpha}{b}
;q)_\infty 
q^{2x^2-x}\alpha^{4x}\,\dd x\nonumber\\
&&\hspace{0.2cm}=\frac{\sqrt{2\pi}\,\alpha \exp\left(\frac{2(\log \alpha)^2}{\log q^{-1}}\right)(-qab;q)_\infty}
{q^\frac18\sqrt{\log q^{-1}}},\nonumber\\
&&\hspace{0.2cm}=\frac{\sqrt{2\pi}\,\alpha \exp\left(\frac{2(\log \alpha)^2}{\log q^{-1}}\right)(iq^{\frac54}{a},
iq^{\frac54}{b},
iq^{\frac34}{a},
iq^{\frac34}{b}
;q)_\infty}
{q^\frac18\sqrt{\log q^{-1}}(q,q^\frac12,q^\frac32;q)_\infty}
\qpsihyp46{\pm q^\frac54,-\frac{iq^\frac14}{a},
\frac{iq^\frac14}{b}
}{\pm q^\frac14,
iq^\frac54{a},
iq^\frac54{b},
0,0}{q,-q^{\frac32}ab}.
\label{qbetaCohl3}
\end{eqnarray}
which is valid for $0<|q|<1$, $\alpha,a,b\in\C\setminus
\{0\}$.
Now in order to inquire about whether the $q\to1^{-}$ limit exists, we
make the replacement $\{\alpha,a,b\}\mapsto\{-iq^\alpha,-iq^a,-iq^b\}$
and rewrite 
\eqref{qbetaCohl3} in terms of $q$-gamma functions, 
\begin{eqnarray}
&&\hspace{0.1cm}\int_{-\infty}^\infty
\frac{\Gamma_q(2(x+\alpha)+1)q^{2x^2-x+4\alpha x}\expe^{-2i\pi x}}
{\Gamma_q(2(x+\alpha),1+a\pm(x+\alpha),1+b\pm(x+\alpha))}
\dd x\nonumber\\
&&\hspace{0.1cm}=\frac{-i\sqrt{2\pi}q^\alpha \exp\left(\frac{2(\log (-iq^\alpha))^2}{\log q^{-1}}\right)}
{q^\frac18(1-q)\sqrt{\log q^{-1}}(q;q)_\infty^3}
\frac{(1-q)^{a+b-1}}{\Gamma_q(a+b+1)}\nonumber\\
&&\hspace{0.1cm}=\frac{-i\sqrt{2\pi}q^\alpha \exp\left(\frac{2(\log (-iq^\alpha))^2}{\log q^{-1}}\right)}
{q^\frac18(1-q)\sqrt{\log q^{-1}}(q;q)_\infty^3}
\frac{\Gamma_q(\frac12,\frac32)}{\Gamma_q(\frac54+a,\frac54+b,\frac34+a,\frac34+b)}
\qpsihyp46{\pm q^\frac54,q^{\frac14-a},q^{\frac14-b}}{\pm q^\frac14,q^{\frac54+a},q^{\frac54+b},0,0}{q,q^{\frac32+a+b}}.\nonumber\\
\label{qbetaCgam3}
\end{eqnarray}
Now if one takes the limit as $q\to 1^{-}$ of \eqref{qbetaCgam3} using
\eqref{limit}, one can see that the limit vanishes because 
\[
\lim_{q\to1^{-}}\frac{(1-q)^{a+b-1}}{\Gamma_q(a+b+1)}=0.
\]
Another way one can see that the limit vanishes is noting that the ${}_4\psi_6$ reduces to a ${}_3H_3(1)$ which can be evaluated using \eqref{5:3H3} and therefore the argument of one of the denominator gamma functions necessarily vanishes. Hence the whole ${}_3H_3(1)$ necessarily vanishes. Using a similar argument one can see that the $q\to1^{-}$ limits of the integrals representations of the ${}_3\psi_6$ 
\begin{eqnarray}
&&\hspace{-1.3cm}\int_{-\infty}^\infty(1+q^{2x}\alpha^2)(-q^{x+1}\alpha a,\frac{q^{1-x}a}{\alpha};q)_\infty\,q^{2x^2-x}\alpha^{4x}\,\dd x=\frac{\sqrt{2\pi}\alpha\exp\left(\frac{2(\log \alpha)^2}{\log q^{-1}}\right)}{q^\frac18\sqrt{\log q^{-1}}} \nonumber\\
&&\hspace{1.0cm}=\frac{\sqrt{2\pi}\alpha\exp\left(\frac{2(\log \alpha)^2
}{\log q^{-1}}\right)(iq^\frac54a,iq^\frac34a;q)_\infty}{q^\frac18\sqrt{\log q^{-1}}(q,q^\frac12,q^\frac32;q)_\infty}
\qpsihyp36{\pm q^\frac54,-\frac{iq^\frac14}{a}}{\pm q^\frac14,iq^\frac54a,0,0,0}{q,iq^\frac74a},
\end{eqnarray}
and the ${}_2\psi_6$ 
\begin{eqnarray}
&&\hspace{-4.3cm}\int_{-\infty}^\infty(1+q^{2x}\alpha^2)\,q^{2x^2-x}\alpha^{4x}\,\dd x=\frac{\sqrt{2\pi}\alpha\exp\left(\frac{2(\log \alpha)^2}{\log q^{-1}}\right)}{q^\frac18\sqrt{\log q^{-1}}} \nonumber\\
&&\hspace{-2.0cm}=\frac{\sqrt{2\pi}\alpha\exp\left(\frac{2(\log \alpha)^2
}{\log q^{-1}}\right)}{q^\frac18\sqrt{\log q^{-1}}(q,q^\frac12,q^\frac32;q)_\infty}
\qpsihyp26{\pm q^\frac54}{\pm q^\frac14,0,0,0,0}{q,q^2},
\end{eqnarray}
must also vanish.

\medskip
\noindent One interesting open question is 
whether there exists more general integral representations of basic bilateral series of Ramanujan-type, such as exists in the $q\to1^{-}$ limit. We are as of yet unable to find such generalizations.


\def\cprime{$'$} \def\dbar{\leavevmode\hbox to 0pt{\hskip.2ex \accent"16\hss}d}

\end{document}